\title[Tight Bounds for Local Glivenko-Cantelli]{Tight Bounds for Local Glivenko-Cantelli}
\renewcommand{\set}[1]{\left\{ #1 \right\}}
\newcommand{\step}[1]{\boldsymbol{\operatorname{step}}_{#1}}
\newcommand{\p}[1][p]{\boldsymbol{#1}}
\newcommand{\h}{\operatorname{h}}
\newcommand{\comment}[1]{}
\begin{document}
%--------- CUSTOM COMMANDS ----------
% names
\newcommand{\trw}{\text{\small TRW}}
\newcommand{\maxcut}{\text{\small MAXCUT}}
\newcommand{\maxcsp}{\text{\small MAXCSP}}
\newcommand{\suol}{\text{SUOL}}
\newcommand{\wuol}{\text{WUOL}}
\newcommand{\crf}{\text{CRF}}
\newcommand{\sual}{\text{SUAL}}
\newcommand{\suil}{\text{SUIL}}
\newcommand{\fs}{\text{FS}}
\newcommand{\fmv}{{\text{FMV}}}
\newcommand{\smv}{{\text{SMV}}}
\newcommand{\wsmv}{{\text{WSMV}}}
\newcommand{\trwp}{\text{\small TRW}^\prime}
\newcommand{\alg}{\text{ALG}}
\newcommand{\rhos}{\rho^\star}
\newcommand{\brhos}{\brho^\star}
\newcommand{\bzero}{{\mathbf 0}}
\newcommand{\bs}{{\mathbf s}}
\newcommand{\bw}{{\mathbf w}}
\newcommand{\bws}{\bw^\star}
\newcommand{\ws}{w^\star}
\newcommand{\Prt}{{\mathsf {Part}}}
\newcommand{\Fs}{F^\star}

\newcommand{\Hs}{{\mathsf H} }

\newcommand{\hL}{\hat{L}}
\newcommand{\hU}{\hat{U}}
\newcommand{\hu}{\hat{u}}

\newcommand{\bu}{{\mathbf u}}
\newcommand{\ubf}{{\mathbf u}}
\newcommand{\hbu}{\hat{\bu}}

\newcommand{\primal}{\textbf{Primal}}
\newcommand{\dual}{\textbf{Dual}}

\newcommand{\Ptree}{{\sf P}^{\text{tree}}}
\newcommand{\bv}{{\mathbf v}}

%bold commands
\newcommand{\bq}{\boldsymbol q}

%random variables
\newcommand{\rvM}{\text{M}}

%\mathcal commands
\newcommand{\Acal}{\mathcal{A}}
\newcommand{\Bcal}{\mathcal{B}}
\newcommand{\Ccal}{\mathcal{C}}
\newcommand{\Dcal}{\mathcal{D}}
\newcommand{\Ecal}{\mathcal{E}}
\newcommand{\Fcal}{\mathcal{F}}
\newcommand{\Gcal}{\mathcal{G}}
\newcommand{\Hcal}{\mathcal{H}}
\newcommand{\Ical}{\mathcal{I}}
\newcommand{\Lcal}{\mathcal{L}}
\newcommand{\Ncal}{\mathcal{N}}
\newcommand{\Pcal}{\mathcal{P}}
\newcommand{\Scal}{\mathcal{S}}
\newcommand{\Tcal}{\mathcal{T}}
\newcommand{\Ucal}{\mathcal{U}}
\newcommand{\Vcal}{\mathcal{V}}
\newcommand{\Wcal}{\mathcal{W}}
\newcommand{\Xcal}{\mathcal{X}}
\newcommand{\Ycal}{\mathcal{Y}}
\newcommand{\Ocal}{\mathcal{O}}
\newcommand{\Qcal}{\mathcal{Q}}
\newcommand{\Rcal}{\mathcal{R}}

\newcommand{\brho}{\boldsymbol{\rho}}

%\mathbb commands
\newcommand{\Cbb}{\mathbb{C}}
\newcommand{\Ebb}{\mathbb{E}}
\newcommand{\Nbb}{\mathbb{N}}
\newcommand{\Pbb}{\mathbb{P}}
\newcommand{\Qbb}{\mathbb{Q}}
\newcommand{\Rbb}{\mathbb{R}}
\newcommand{\Sbb}{\mathbb{S}}
\newcommand{\Xbb}{\mathbb{X}}
\newcommand{\Ybb}{\mathbb{Y}}
\newcommand{\Zbb}{\mathbb{Z}}

\newcommand{\Rbbp}{\Rbb_+}

\newcommand{\bX}{{\mathbf X}}
\newcommand{\bx}{{\boldsymbol x}}

\newcommand{\btheta}{\boldsymbol{\theta}}

\newcommand{\Pb}{\mathbb{P}}

\newcommand{\hPhi}{\widehat{\Phi}}

%\hat commands
\newcommand{\Sigmah}{\widehat{\Sigma}}
\newcommand{\thetah}{\widehat{\theta}}

%Functional commands
\newcommand{\indep}{\perp \!\!\! \perp}
\newcommand{\notindep}{\not\!\perp\!\!\!\perp}

\newcommand{\one}{\mathbbm{1}}
\newcommand{\1}{\mathbbm{1}}%{{\rm 1}\kern-0.24em{\rm I}}
\newcommand{\aprx}{\alpha}

%Article commands
\newcommand{\ST}{\Tcal(\Gcal)}
\newcommand{\x}{\mathsf{x}}
\newcommand{\y}{\mathsf{y}}
\newcommand{\Ybf}{\textbf{Y}}
\newcommand{\smiddle}[1]{\;\middle#1\;}%middle with spaces before and after

%Edition commands
\definecolor{dark_red}{rgb}{0.2,0,0}
\newcommand{\detail}[1]{\textcolor{dark_red}{#1}}

\newcommand{\ds}[1]{{\color{red} #1}}
\newcommand{\rc}[1]{{\color{green} #1}}

\newcommand{\mb}[1]{\ensuremath{\boldsymbol{#1}}}

\newcommand{\metric}{\rho}

\newcommand{\paren}[1]{\left( #1 \right)}
\newcommand{\sqb}[1]{\left[ #1 \right]}
\newcommand{\floor}[1]{\lfloor #1 \rfloor}
\newcommand{\ceil}[1]{\lceil #1 \rceil}
\newcommand{\abs}[1]{\left|#1\right|}

\maketitle

\begin{abstract}%
This paper addresses the statistical problem of estimating the infinite-norm deviation from the empirical mean to the distribution mean for high-dimensional distributions on $\{0,1\}^d$, potentially with $d=\infty$. Unlike traditional bounds as in the classical Glivenko-Cantelli theorem, we explore the instance-dependent convergence behavior. For product distributions, we provide the exact non-asymptotic behavior of the expected maximum deviation, revealing various regimes of decay. In particular, these tight bounds demonstrate the necessity of a previously proposed factor for an upper bound, answering a corresponding COLT 2023 open problem \citep{cohen2022local,cohen2023open}. We also consider general distributions on $\{0,1\}^d$ and provide the tightest possible bounds for the maximum deviation of the empirical mean given only the mean statistic. Along the way, we prove a localized version of the Dvoretzky–Kiefer–Wolfowitz inequality. Additionally, we present some results for two other cases, one where the deviation is measured in some $q$-norm, and the other where the distribution is supported on a continuous domain $[0,1]^d$, and also provide some high-probability bounds for the maximum deviation in the independent Bernoulli case.
\end{abstract}

%\begin{keywords}%
%  \vv{List of keywords}
%\end{keywords}

\section{Introduction}

We consider the fundamental statistical problem of estimating the maximal empirical mean deviation for multiple independent Bernoulli random variables. Precisely, for a potentially infinite sequence $\p$ of parameters $p(j)\in[0,1]$ for $j\geq 1$, we consider the product distribution $\mu$ such that the coordinates of $X\sim \mu$ are independent Bernoulli random variables with parameters given by $\p$;
that is $\Ebb[\mu] = \p$ (we refer to the textbook \citet{kallenberg1997foundations} for measure-theoretic concerns). Given $n$ i.i.d.\ samples $X_1,\ldots,X_n$ of $\mu$, we aim to understand the maximum deviation of the empirical mean $\hat \p_n = \frac{1}{n}\sum_{i=1}^n X_i$ to the mean $\p$. We mainly focus on its expectation
\begin{equation*}
    \Delta_n(\p) := \Ebb\|\hat \p_n-\p\|_\infty = \Ebb\sup_j \abs{\hat p_n(j) -p(j)},
\end{equation*}

Understanding the convergence of the empirical mean of i.i.d.\ sequences and studying mean estimators are foundational problems in statistical analysis. A substantial body of literature has explored convergence rates for mean estimation problems in fixed dimensions $d$, under diverse distributional assumptions \citep{catoni2012challenging,devroye2016sub,lugosi2019sub,lugosi2019mean,cherapanamjeri2019fast,diakonikolas2020outlier, hopkins2020mean,lugosi2021robust,cherapanamjeri2022optimal, lee2022optimal}. We note that one uses different estimators of the expectation based on the different use cases. For instance, if one seeks for an estimator such that the sample complexity of $\Pbb(\abs{\mu - \hat \mu_n} \geq \varepsilon) \leq \gamma$ is minimized, then sample mean is usually not the right choice.

\sloppy On the other hand, the classical Glivenko-Cantelli theorem provides distribution-free convergence bounds for the empirical mean, quantified by Dvoretzky–Kiefer–Wolfowitz inequality: $\Delta_n(\p)\lesssim \sqrt{\ln(d+1)/n}$ for $d$-dimensional distributions $\mu$. While this rate is optimal up to constants without further assumptions on $p$---it is attained when $\p$ is a $d$-dimensional constant vector $(c,c,\dots,c)$ for some constant $c>0$---this worst-case bound may not capture the correct behavior of $\Delta_n(\p)$ for specific instances of $\p$. In particular, this bound is overly pessimistic when the coordinates of $\p$ decay to 0 sufficiently fast. As a simple example, in the infinite-dimensional case when $p(j)=1/j$ for $j\geq 1$, $\Delta_n(\p)$ converges to $0$ as the number of samples $n$ grows, while the Glivenko-Cantelli theorem does not provide a useful bound. Instead, we are interested in the instance-dependent convergence behavior, which allows us to provide dimension-free results; that is, results without an explicit dependency on dimension, and have potentially an infinite vector $\p$ of non-zero probabilities.

This problem was first posed and studied by~\citet{thomas2018uniform,cohen2022local}. By symmetry, without loss of generality, we will assume that $p(j)\in[0,\frac{1}{2}]$ for every $j\geq 1$, and that the probabilities $p(1),p(2),\ldots$ are sorted in descending order that is $p(j)\geq p(j+1)$ for $j\geq1$. Following the notation of \cite{cohen2022local}, we denote by $[0,\frac{1}{2}]^\Nbb_{\downarrow 0}$ these sequences. Having introduced the following functionals,

\comment{

 a statistical estimation problem involving multiple independent Bernoulli
random variables. Our goal is to derive an upper bound for the expected supremum of the absolute deviation between each sample mean of the Bernoulli variables and their respective true means using $n$ samples. 

Formally, we have a possibly infinite sequence of probabilities $p(j)$, $j\geq 1$. For every its element we have a corresponding binomial random variable $Y(j)\sim \Bcal(n,p(j))$ and the sample mean of the underlying Bernoulli distribution is $\hat p_n(j) = \frac{Y(j)}{n}$. The quantity of interest is
\[
\Delta_n(\p) = \Ebb[\sup_j \abs{\hat p_n(j) -p(j)}].
\]

 We assume in full generality that $p(j) \leq \frac{1}{2}$ for every $j$   and also that the probabilities are sorted in a descending order; that is $p(j)\geq p(j+1)$ for $j>1$.

Our setting shares similarities with the renowned Glivenko-Cantelli theorem. However, our problem differs, as  Glivenko-Cantelli provides distribution-free results. In contrast, we aim to discover the true convergence behavior for particular distributions from the family of multivariate independent Bernoulli random variables. This allows us to provide dimension-free results; that is results without an explicit dependency on dimension; and have potentially infinite vector $p$ of non-zero probabilities. As opposed to the standard Glivenko-Cantelli where the tight distribution-free rate is quantified by Dvoretzky–Kiefer–Wolfowitz inequality: $\Delta_n(\p) \asymp \sqrt{\frac{\ln(d+1)}{n}}$. While this rate is asymptotically optimal when $p$ is a $d$-dimensional constant vector $(c,c,\dots,c)$ for some constant $c>0$, it is far from capturing the correct behavior of $\Delta_n(\p)$ in general. 

The problem was studied by~\citet{cohen2022local} even in the case where the Bernoulli random variables were not independent. They defined functionals 

\[
    T(\p) = \sup_{j\in\Nbb}\frac{\ln(j+1)}{\ln(1/p(j))} 
\]  
 and 
\[
    S(\p) = \sup_{j\in\Nbb}p(j)\ln(j+1)    
\]

}

\begin{equation*}
    S(\p) = \sup_{j\in\Nbb}p(j)\ln(j+1) \quad \text{and} \quad   T(\p) = \sup_{j\in\Nbb}\frac{\ln(j+1)}{\ln(1/p(j))},
\end{equation*}
they showed that $\Delta_n(\p)$ converges to $0$ if and only if $T(\p)<\infty$.
%; it directly follows from Theorem~\ref{thm:main_result}, see the first case of Equation~\eqref{eq:definition_phi}. 
Further, they characterized the asymptotic behavior of $\Delta_n(\p)$ for sequences for which $T(\p)<\infty$ and showed that it decays as $\sqrt{S(\p)/n}$ for $n\to\infty$, which corresponds to a sub-Gaussian decay regime for binomials, thoroughly studied in the literature \citep{kearns2013large,berend2013concentration,buldygin2013sub}. The same $\sqrt{S(\p)/n}$ behavior also typically arises in the literature on minimax testing and goodness-of-fit problems with Gaussian, multinomial, or Poisson models \cite{valiant2017automatic,balakrishnan2019hypothesis,chhor2020sharp,chhor2021goodness,chhor2022sparse}. In terms of non-asymptotic results, they provide the following upper bound for a universal constant $c>0$,
\begin{equation}\label{eq:bound_cohen}
    \Delta_n(\p) \leq c\left(\sqrt{\frac{S(\p)}{n}} +\frac{T(\p) \ln(n)}{n}\right),\quad n\geq e^3.
\end{equation}
and conjectured that the $\ln(n)$ factor is superfluous in an open problem presented at COLT 2023~\citep{cohen2023open,cohen2022local}. In this work, we completely characterize the non-asymptotic behavior of $\Delta_n(\p)$. In particular, we show that the $\ln(n)$ factor in Eq~\eqref{eq:bound_cohen} is necessary when considering only the functionals $S(\p)$ and $T(\p)$. Our characterization unveils different regimes of decay for $\Delta_n(\p)$, ranging from a
%somewhat sub-exponential/
somewhat Poissonian
sub-gamma regime \citep{boucheron2013concentration} to the asymptotic $\sqrt{S(\p)/n}$ sub-Gaussian regime.

\paragraph{Notation} We use the following notations for maxima and minima $a \lor b :=  \max(a,b)$ and $a \land b := \min(a,b)$ respectively. We write $f(n) \gtrsim g(n)$ (respectively $f(n)\asymp g(n)$) when there exists a universal constant $c>0$ (respectively exist universal constants $c,C>0$) such that $f(n)\geq c g(n)$ (respectively $c g(n) \leq f(n) \leq C g(n)$) for every integer $n \geq  1$. The positive part of $x$ is denoted as $[x]_+ :=  \max(0, x)$. Sequences are typed in bold (for example $\p$).

\paragraph{Outline of the paper} We state our main results in \cref{sec:main_results}. We then give an overview of the proof for the characterization of the expected maximum empirical deviation for product distributions in \cref{sec:characterization} and compare to the literature in Subsection~\ref{ssec:comparison}. We next consider general distributions on $\{0,1\}^d$ in \cref{sec:correlated_case}. Last, we discuss in \cref{sec:open_problem} the implications of our results for COLT 2023 open problem and conclude in \cref{sec:conclusion}. Full proofs are given in the appendix.

\section{Main results and discussion}
\label{sec:main_results}

In this section, we outline the main results. In particular, in Subsection~\ref{ssec:independen_bernoulli} we outline the results for the case of independent Bernoulli random variables for $\infty$ norm deviations. Next, in Subsection~\ref{ssec:dependen_bernoulli} we show the results for the case of dependent Bernoulli random variables. As a stepping stone, we derive a variance dependent version of Dvoretzky-Kiefer-Wolfowitz inequality. In Subsection~\ref{ssec:general_distributions} we present the results for the case where the continuous distributions is supported on $[0,1]^d$ and not just on $\{0,1\}^d$. In Subsection~\ref{ssec:lq_norms} we provide the treatment for the case where we have independent Bernoulli random variables, but we measure the deviation in $q-$norm instead of $\infty$ norm. Finally, in Subsection~\ref{ssec:hp} we present some high-probability bounds for the independent Bernoulli case.

\subsection{Non-asymptotic bounds for independent Bernoulli random variables}\label{ssec:independen_bernoulli}

We start with a brief overview of the results, ignoring the corner cases. It turns out that the crucial aspect is to determine the behavior of $\Delta$ for "step-like" sequences of probabilities $\step{J,q}$ such that $\step{J,q}(i)=q$ for all $i\leq J$ and $\step{J,q}(i)=0$ otherwise. 
On the one hand, we will demonstrate that for a given sequence $\p$ it holds that $\Delta(\p) \gtrsim \Delta(\step{i, p(i)})$. The reason is that $\p \geq \step{i, p(i)}$ element-wise; and therefore the random variables following $\p$ have heavier tails. On the other hand, it also holds that $\Delta(\p) \lesssim \sup_{i \geq 1} \Delta(\step{i, p(i)})$, which we will prove through tail-summation. We refer to Figure~\ref{fig:enter-label} for the illustration of this approach.

\begin{figure}[h]
    \centering
    \includegraphics[width=1\textwidth]{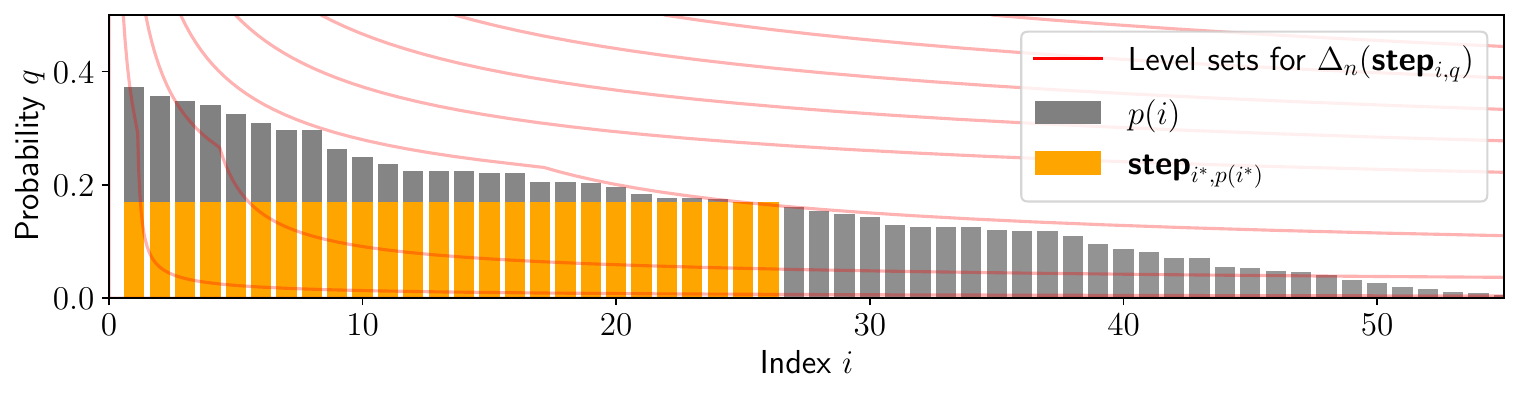}
    \caption{ Illustration of the reduction from general probability profiles $\p$ to step functions $\step{i,q}$. Red curves represent level sets of the expected maximum deviations for step functions $(i,q)\mapsto \Delta_n(\step{i,q})$. The expected maximum deviation $\Delta_n(\p)$ for general probabilities $\p$ is dominated by the maximum deviation of a step function of the form $\step{i,p(i)}$, attained for $i^\star$.}
    \label{fig:enter-label}
\end{figure}

% We state our main characterization in terms of the functional $\phi_{J,q}(n)$, defined for all $n,J\geq 1$ and $q\in[0,\frac{1}{2}]$ as follows,
Next, we compute the value of $\Delta(\step{J,q})$ which exhibits three regimes. We state our main characterization in terms of a functional $\phi_{J,q}(n)\asymp\Delta_n(\step{J,q})$.  Formally, $\phi_{J,q}(n)$ is defined for all $n,J\geq 1$ and $q\in[0,\frac{1}{2}]$ via 

 %For intuition, we show that when $q\geq 1/(2nJ)$, this function exactly characterizes the behavior of $\Delta_n$ for the particular case of ``step-like'' sequence $\step{J,q}$ such that $\step{J,q}(i)=q$ for all $i\leq J$ and $\step{J,q}(i)=0$ otherwise. That is, $\Delta_n(\step{J,q})\asymp \phi_{J,q}(n)$. Formally, $\phi_{J,q}(n)$ is defined for all $n,J\geq 1$ and $q\in[0,\frac{1}{2}]$ via 
\begin{equation}
\label{eq:definition_phi}
     \phi_{J,q}(n) := \begin{cases}
            1 & n\leq \frac{\ln(J+1)}{\ln\frac{1}{q}},\\
            \frac{\ln (J+1)}{n\ln\frac{\ln (J+1)}{nq}} &   \frac{\ln(J+1)}{\ln\frac{1}{q}} \leq n\leq \frac{\ln (J+1)}{eq},\\
            \sqrt{\frac{q\ln (J+1)}{n}} & n\geq \frac{\ln (J+1)}{eq}.
            \end{cases}
\end{equation}
By convention, when $q=0$, we pose $\phi_{J,q}(n)=0$ for all $n,J\geq 1$. We now give some interpretation. 
%For sufficiently large probability $q\geq 1/(2nJ)$, $\Delta_n(\step{J,q})\asymp\phi_{J,q}(n)$ exhibits three regimes.
\begin{itemize}
    \item First, a constant regime when $n\leq T(\step{J,q})$, which was to be expected from the following bound from \cite{cohen2022local},
\begin{equation*}
    \Delta_n(\step{J,q})\geq 1\land \frac{T(\step{J,q})}{n}.
\end{equation*}
    \item The second regime interpolates between a behavior $T(\step{J,q})/n$ when $n\leq \ln(J+1)/q^a$ for some arbitrary (but fixed) exponent $a<1$; and a decay of the form $\ln(J+1)/n$ towards the end of the regime, when $n\sim \ln(J+1)/q$.
    \item Last, the third regime in which $\Delta_n(\step{J,q})\asymp \sqrt{S(\step{J,q})/n}$ specifies when the asymptotic bound from \cite{cohen2022local} is tight.
\end{itemize}

%The complete characterization of $\Delta_n(\p)$ for general $\p$ uses the functions $\phi_{j,p(j)}(n)$ as a  building block, and also exhibits a separate behavior for the small probability regime in which $p(j)\leq 1/(2nj)$ for all $j\geq 1$. Note that for any $j\geq 1$, since $p(i)\geq p(j)$ for all $i\leq j$, the decay of $\Delta_n(\p)$ should be at least slower than $\Delta_n(\step{j,p(j)})\asymp \phi_{j,p(j)}(n)$ for the step-like sequence $\step{j,p(j)}$ \vv{reference}. For most cases, this observation is also tight, as described in our following main result. We give an overview of the proof in \cref{sec:characterization} and the remaining proofs are deferred to \cref{sec:proof_characterization}.

The complete characterization of $\Delta_n(\p)$ additionally exhibits a separate behavior for the small probability regime.
The main result now can be written as follows. 
%We give an overview of the proof in \cref{sec:characterization} and the remaining proofs are deferred to \cref{sec:proof_characterization}.

\begin{restatable}{theorem}{maintheorem}
\label{thm:main_result}
    Let $n\geq 1$ and $\p\in[0,\frac{1}{2}]^\Nbb_{\downarrow 0}$.
    \begin{itemize}
        \item   If for all $j\geq 1$, one has $p(j)\leq \frac{1}{2nj}$, then $\Delta_n(\p)\asymp \frac{1}{n} \land \sum_{j\geq 1}p(j)$.
        \item Otherwise,
        \begin{equation*}
            \Delta_n(\p)\asymp \sup_{j\geq 1}\phi_{j,p(j)}(n)\asymp 1\land \sup_{j\geq 1}\paren{\sqrt{\frac{p(j)\ln(j+1)}{n}} \lor  \frac{\ln(j+1)}{n\ln\paren{2+\frac{\ln(j+1)}{np(j)}}}  }.
        \end{equation*}
    \end{itemize}
\end{restatable}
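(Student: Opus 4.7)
The plan is to reduce the problem to understanding step distributions $\step{J,q}$, then sharply compute $\Delta_n(\step{J,q}) \asymp \phi_{J,q}(n)$; the equivalence with the second supremum in the theorem is then a routine rewriting of the three-branch definition of $\phi$. For the lower bound in the main regime, I would use that $\p$ is sorted in descending order, so for every $i$ the first $i$ coordinates satisfy $p(j) \geq p(i)$ on $[0, \tfrac{1}{2}]$. Since $p(1-p)$ is increasing there, a coupling argument together with the binomial anti-concentration bounds used for the step-function analysis yields $\Delta_n(\p) \gtrsim \Delta_n(\step{i, p(i)})$, and taking the supremum over $i$ gives $\Delta_n(\p) \gtrsim \sup_i \phi_{i,p(i)}(n)$.

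The matching upper bound is the more delicate direction. I would group the coordinates into dyadic level sets $A_k := \{j : p(j) \in (2^{-k-1}, 2^{-k}]\}$, and for each $k$ dominate the contribution of $A_k$ by that of the step distribution $\step{|A_k|, 2^{-k}}$. A tail-summation argument then writes $\Pbb(\|\hat \p_n - \p\|_\infty \geq t)$ as a sum of tail probabilities across levels; because $\phi_{J,q}(n)$ varies sufficiently slowly in $J$, this sum collapses into the supremum $\sup_i \phi_{i, p(i)}(n)$ rather than incurring a logarithmic penalty.

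The computation of $\Delta_n(\step{J,q})$ itself splits according to the concentration regimes of $\text{Bin}(n,q)$: a trivial regime when $nq$ is too small for even one coordinate to reliably fluctuate, a sub-Poisson regime where the binomial MGF gives tails of the form $\exp(-nt \ln(t/(eq)))$, and a sub-Gaussian regime once $nq \gtrsim \ln J$. Upper bounds follow from Chernoff/Bennett-type concentration combined with a union bound over $J$ coordinates, with each regime's tail inverted at level $1/J$ yielding the corresponding branch of $\phi_{J,q}(n)$. Matching lower bounds follow from binomial anti-concentration: one shows $\Pbb(|B/n - q| \geq c\, \phi_{J,q}(n)) \gtrsim 1/J$, so independence of the $J$ copies guarantees that at least one coordinate exceeds this threshold with constant probability.

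Finally, the small-probability regime $p(j) \leq 1/(2nj)$ is handled separately: here $\|\hat\p_n - \p\|_\infty$ essentially takes values in $\{0\} \cup [1/n, \infty)$ with high probability, so $\Delta_n(\p) \asymp (1/n)\, \Pbb(\exists i \leq n,\; j \geq 1 : X_i(j) = 1)$, and Bonferroni inequalities give this probability $\asymp \min(1, n \sum_j p(j))$, leading to the stated $\asymp (1/n) \wedge \sum_j p(j)$. The main obstacle I anticipate is the upper bound via tail summation in the sub-Poisson middle branch: to avoid a spurious logarithmic loss, the dyadic contributions must be summed against the exact shape of $\phi_{J,q}(n)$ rather than a cruder envelope, justifying that the dominant index $i^\star$ in Figure \ref{fig:enter-label} captures the total contribution up to universal constants.
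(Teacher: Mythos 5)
Your overall plan --- reduce to step sequences, bound the step-sequence deviation $\Delta_n(\step{J,q})$ by inverting concentration/anti-concentration in the three regimes, and treat the rare-event regime separately --- matches the paper's strategy, but there are two places where the execution differs in a way worth flagging.

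For the \emph{upper bound} you propose a dyadic level-set decomposition $A_k = \{j : p(j)\in(2^{-k-1},2^{-k}]\}$ and then tail-sum across levels, noting yourself that avoiding a spurious logarithmic loss here is delicate. The paper bypasses that delicacy entirely: it defines the single scale $\varepsilon := \sup_{i\geq 1}\varepsilon_{i,p(i)}(n)$ where $\varepsilon_{i,p(i)}(n)$ is the threshold at which $\Pbb(\hat p_n(i)\geq p(i)+\varepsilon) \leq c_0/(2i)$, then uses convexity of $\varepsilon\mapsto D(p+\varepsilon\parallel p)$ together with Chernoff to get the \emph{per-index} tail bound $\Pbb(\hat p_n(i) - p(i) \geq kC\varepsilon)\leq (2i)^{-k}$, so a straight union bound over $i$ yields $\Pbb(\sup_i[\hat p_n(i)-p(i)]\geq kC\varepsilon) \leq 2^{-(k-1)}$ with no level grouping at all. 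The geometric decay in both $i$ and $k$ is already built in, so there is nothing left for a log factor to hide in. Your dyadic route can probably be pushed through because the level-set cardinalities $|A_k|$ grow super-exponentially whenever several levels have comparable $\phi$, but it requires exactly the careful summation you anticipate; the paper's choice of per-index thresholds is the cleaner device.

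For the \emph{lower bound} the phrase ``coupling argument together with $p(1-p)$ increasing'' undersells what is needed and, taken literally, does not close the gap. Coupling gives $\hat p_n(j)\geq \hat q_n$ when $p(j)\geq q$, but that does \emph{not} imply the centered deviation $\hat p_n(j)-p(j)$ stochastically dominates $\hat q_n - q$, and a variance comparison via $p(1-p)$ only speaks to the sub-Gaussian regime. What actually makes $\Delta_n(\p)\gtrsim \Delta_n(\step{i,p(i)})$ work is the monotonicity of $p\mapsto D(p+\varepsilon\parallel p)$ on $[0,\tfrac{1-\varepsilon}{2}]$ (and a separate check near $p\approx \tfrac12$ using the half-shift $\varepsilon/2$), combined with the anti-concentration bound to convert the KL inequality back into a lower bound on the right-tail probability at each $j\leq i$. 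That is the content of the paper's Proposition~\ref{prop:reduction}, and you will need that KL comparison, not a coupling, in the sub-Poisson branch. You also need a small side case when some $p(j)$ sits in the window $[\varepsilon_i/2, 1/n]$, where neither estimate applies directly but a direct binomial computation already forces $\Delta_n^+(\p)\gtrsim 1/n$.

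Finally, the theorem is about $\Delta_n(\p)=\Ebb\|\hat\p_n-\p\|_\infty$, whereas your sketch implicitly controls only the one-sided deviation $\sup_j[\hat p_n(j)-p(j)]_+$; you still need to verify that the left-tail term $\Delta_n^-(\p)$ is dominated. The paper does this by the bound $\Delta_n^-(\p)\leq 1 \wedge \sqrt{S(\p)/n}$ (from \citet{cohen2022local}), which is always $\lesssim \sup_j\phi_{j,p(j)}(n)$ in the non-Poissonian case, and by $\Delta_n^-(\p)\leq p(1)$ in the Poissonian case.
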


In the second case, our bounds exhibit the asymptotic sub-gaussian term $  \sqrt{S(\p)/n}$, with a 
%somewhat sub-exponential 
sub-gamma
extra term that interpolates between the regime $n\leq T(\p)$ for which $\Delta_n(\p)=\Theta(1)$ and the regime when the sub-gaussian term dominates. As a comparison to the bound Eq~\eqref{eq:bound_cohen} written in terms of the functionals $S(\p)$ and $T(\p)$, in this intermediate regime, the expected maximum deviation lies between $T(\p)/n$ and $T(\p)\ln n/n$. We refer to the end of \cref{sec:characterization} for a complete discussion on the implications of this result.

As a consequence of the characterization, we answer the open problem \citep{cohen2023open} by the negative. We show that if one only seeks bounds of $\Delta_n(\p)$ in terms of the sub-Gaussian term $\sqrt{S(\p)/n}$, and the functional $T(\p)$, there are sequence instances for which the $\ln(n)$ term from Eq~\eqref{eq:bound_cohen} is necessary. A constructive proof can be found in \cref{sec:open_problem}.

\begin{restatable}{theorem}{thmopenquestion}
\label{thm:open_problem}
    Suppose that there exists a constant $C\geq  1$ and $n_0\geq 1$, and a function $\psi:\Nbb\to\Rbb$ such that the inequality  
\begin{equation*}
    \Delta_n(\p) \leq C\sqrt{\frac{S(\p)}{n}} + \frac{T(\p)}{n}\psi(n)
\end{equation*}
holds for all $n\geq n_0$ and $\p\in[0,\frac{1}{2}]^\Nbb_{\downarrow 0}$ (product measures), then for an integer $n_1$ and a constant $c>0$ depending only on $C$,
\begin{equation*}
    \psi(n) \geq c \ln n,\quad n\geq n_0\lor n_1.
\end{equation*}
\end{restatable}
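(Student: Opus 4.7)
The plan is to exhibit, for each sufficiently large $n$, a single ``bad'' sequence $\p_n$ for which the assumed inequality forces $\psi(n) \gtrsim \ln n$. I will pick a step function $\step{J,q}$ that sits in the middle branch of $\phi_{J,q}$ from \eqref{eq:definition_phi}, because that is exactly the regime where $\Delta_n(\step{J,q})$ exceeds $T(\step{J,q})/n$ by a logarithmic factor; simultaneously I will arrange for the sub-Gaussian term $\sqrt{S(\step{J,q})/n}$ to be dominated by $\Delta_n(\step{J,q})$.

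Concretely, fix a constant $A=A(C)\geq e$ to be determined, set $J:=\lceil e^A-1\rceil$ so that $\ln(J+1)\asymp A$, and take $\p_n := \step{J, 1/n}$ for every $n\geq 2$. Since $p_n(1)=1/n > 1/(2n)$, the sequence falls into the second case of \cref{thm:main_result}. For $n$ large enough that $A/\ln n \leq n \leq An/e$ (which holds for any $n\geq n_1(A)$ once $A\geq e$), the pair $(J,q)=(J,1/n)$ lies in the intermediate regime of $\phi_{J,q}(n)$, and the supremum over $j$ in \cref{thm:main_result} is attained (up to a universal constant) at $j=J$, with $\ln(J+1)/(np_n(J)) = A$. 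Hence
\[
\Delta_n(\p_n) \;\asymp\; \frac{\ln(J+1)}{n\,\ln\!\paren{2+\frac{\ln(J+1)}{np_n(J)}}} \;\asymp\; \frac{A}{n\,\ln A}.
\]
At the same time, $S(\p_n) = (1/n)\ln(J+1) \asymp A/n$ and $T(\p_n)=\ln(J+1)/\ln n \asymp A/\ln n$, so plugging these three asymptotics into the assumed bound
\[
\Delta_n(\p_n) \;\leq\; C\sqrt{S(\p_n)/n} + T(\p_n)\psi(n)/n
\]
and multiplying through by $n/A$ yields
\[
\frac{c_1}{\ln A} \;\leq\; \frac{C}{\sqrt{A}} + \frac{\psi(n)}{\ln n},
\]
where $c_1>0$ is the universal constant hidden in the lower bound ``$\gtrsim$'' of \cref{thm:main_result}.

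Choosing $A=A(C)$ large enough that $C/\sqrt{A} \leq c_1/(2\ln A)$ --- which is possible since $\sqrt{A}/\ln A \to \infty$ --- gives $\psi(n) \geq \bigl(c_1/(2\ln A)\bigr)\ln n$ for all $n \geq n_0\lor n_1$, proving the theorem with $c := c_1/(2\ln A(C))$ and $n_1 := n_1(A(C))$ both depending only on $C$. The only genuinely delicate step is verifying the middle-branch condition and ensuring that the sub-Gaussian contribution $C\sqrt{A}/n$ is strictly smaller than a constant fraction of $\Delta_n(\p_n)\asymp A/(n\ln A)$; both are handled simultaneously by making $A$ a sufficiently large constant depending on $C$, rather than letting $A$ grow with $n$.
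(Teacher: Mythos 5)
Your proof is correct and follows essentially the same strategy as the paper's first proof of this theorem: construct a step sequence sitting in the middle regime of $\phi_{J,q}$, invoke the characterization from \cref{thm:main_result} (together with \cref{prop:combined_bound_eps}) to get a matching lower bound on $\Delta_n$, and compare it to $C\sqrt{S/n}+T\psi(n)/n$. The only real difference is the choice of parameters. You fix the support length $J$ once and for all ($\ln(J+1)\asymp A$ depending only on $C$) and set $q=1/n$, so that $\frac{\ln(J+1)}{nq}\asymp A$ is constant, $\Delta_n\asymp \frac{A}{n\ln A}$, $\sqrt{S/n}\asymp\frac{\sqrt A}{n}$, and $T/n\asymp\frac{A}{n\ln n}$. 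The paper instead sets $q=\frac{1}{K\sqrt n}$ and $\ln(J+1)\asymp K\sqrt n$, so both parameters scale with $n$; there $\frac{\ln(J+1)}{nq}\asymp K^2$ is constant, $\Delta_n\asymp\frac{K}{\sqrt n\ln K}$, $\sqrt{S/n}\asymp\frac{1}{\sqrt n}$, $T/n\asymp\frac{K}{\sqrt n\ln n}$. In both cases the ratio $\Delta_n/(T/n)\asymp\frac{\ln n}{\ln(\text{const})}$ while $\Delta_n/\sqrt{S/n}$ can be pushed above $2C$ by increasing the constant, and the conclusion follows. Your parameterization is arguably cleaner (the example depends on $n$ only through $q=1/n$, and everything lives in the $\Theta(1/n)$ decay scale), whereas the paper's shows the same necessity at the $\Theta(1/\sqrt n)$ scale; both are fine. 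One small imprecision: you write $\frac{\ln(J+1)}{np_n(J)}=A$, but it is only $\asymp A$ (it equals $\ln(J+1)\in[A,A+1]$); this does not affect the argument. Note also that the paper additionally gives a self-contained elementary proof of the same lower bound on $\Delta_n(\p)$ that avoids \cref{thm:main_result}; your route relies on the characterization, matching the paper's first proof.
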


\subsection{Non-asymptotic bounds for correlated Bernoulli random variables}\label{ssec:dependen_bernoulli}

The previous results focused on the particular case of product measures $\mu $ on $\{0,1\}^\Nbb$, i.e., such that all coordinates of $X\sim\mu$ are mutually independent. In that case, the mean $\p=\Ebb_\mu[X]$ completely characterizes the distribution, which in turn allows having the precise descriptions of the decay rate of $\Delta_n(\p)$ from \cref{thm:main_result}. Similarly, one can consider the considerably more general case of arbitrary distributions $\mu$ on $\{0,1\}^\Nbb$ when coordinates may be correlated. As before, we study the expected maximum deviation $\Delta_n(\mu)=\Ebb \|\hat \p_n - \p\|_\infty$, where $\hat \p_n = \frac{1}{n}\sum_{i=1}^n X_i$ for i.i.d.\ samples $X_i\sim \mu$. Our upper bounds from \cref{thm:main_result} extend directly to the general case; however, these may not be tight in general.

\begin{corollary}
    \label{cor:correlated_case}
    Let $\mu$ be a distribution on $\{0,1\}^\Nbb$ with mean $\p=\Ebb_{X\sim \mu}[X]$. Without loss of generality, suppose that $\p\in[0,\frac{1}{2}]^\Nbb_{\downarrow 0}$.
    \begin{itemize}
        \item   If for all $j\geq 1$, one has $p(j)\leq \frac{1}{2nj}$, then $p(1)\lesssim \Delta_n(\mu)\lesssim \frac{1}{n} \land \sum_{j\geq 1}p(j)$.
        \item Otherwise,
        \begin{equation*}
            p(1)\land \sqrt{\frac{p(1)}{n}}\lesssim \Delta_n(\mu)\lesssim 1\land \sup_{j\geq 1}\paren{\sqrt{\frac{p(j)\ln(j+1)}{n}} \lor  \frac{\ln(j+1)}{n\ln\paren{2+\frac{\ln(j+1)}{np(j)}}}  }.
        \end{equation*}
    \end{itemize}
\end{corollary}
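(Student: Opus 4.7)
The plan is to derive both bounds from \cref{thm:main_result} by observing that the upper bound relies only on marginal information, while a one-coordinate reduction delivers the lower bound. For the upper bound, I would inspect the proof of \cref{thm:main_result} in \cref{sec:characterization} and confirm that it controls $\Ebb \sup_j |\hat p_n(j) - p(j)|$ through a tail summation of the form
\[
\Ebb \sup_j |\hat p_n(j) - p(j)| = \int_0^\infty \Pbb\paren{\sup_j |\hat p_n(j) - p(j)| \geq t}\, dt,
\]
in which the integrand is bounded by a union bound $\sum_j \Pbb(|\hat p_n(j) - p(j)| \geq t)$ followed by Chernoff estimates on each coordinate. Since for any distribution $\mu$ with mean $\p$, the marginal law of $\hat p_n(j)$ is always $\text{Binomial}(n, p(j))/n$ regardless of the correlation structure, the coordinate-wise Chernoff bounds and hence the entire tail summation carry over unchanged; this gives the upper bound in the second case. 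The $\sum_j p(j)$ piece of the first case follows from Markov coordinate by coordinate, and the $1/n$ piece from a marginal union bound on $\{\sup_j Y_j \geq k\}$ using $\Pbb(Y_j \geq k) \leq \binom{n}{k} p(j)^k$ together with the hypothesis $p(j) \leq 1/(2nj)$.

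For the lower bound I would keep only the first coordinate,
\[
\Delta_n(\mu) \geq \Ebb|\hat p_n(1) - p(1)|,
\]
and invoke the classical mean-absolute-deviation estimate $\Ebb|Y/n - p| \gtrsim p \land \sqrt{p/n}$ for $Y \sim \text{Binomial}(n, p)$. When $np \lesssim 1$ this follows from $\Pbb(Y=0) = (1-p)^n \gtrsim 1$ combined with $|Y/n - p| = p$ on that event; when $np \gtrsim 1$, a Paley-Zygmund-type argument on $(Y/n - p)^2$, using $\text{Var}(Y/n) = p(1-p)/n \gtrsim p/n$ for $p \leq 1/2$, yields the $\sqrt{p/n}$ factor. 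Specializing to $p = p(1)$ then recovers both statements of the corollary: in the first case $p(1) \leq 1/(2n)$, so $np(1) \leq 1/2$ and the bound collapses to $p(1)$; in the second case we retain the full $p(1) \land \sqrt{p(1)/n}$.

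I do not anticipate a substantive obstacle. Both halves reduce to standard facts about a single Binomial, and the upper bound in the correlated case is obtained by reading the existing argument on marginals alone. The one piece of bookkeeping is to check that no step in \cref{sec:characterization} tacitly relies on joint concentration (for example, negative association or independence-based Bernstein inequalities) that would not survive the loss of independence. Since the outlined reduction to step functions $\step{i, p(i)}$ proceeds via tail summation that only calls on one-dimensional Chernoff tails, this verification should be routine.
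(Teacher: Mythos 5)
Your proposal matches the paper's proof exactly: the upper bound carries over because every step in the proof of Theorem \ref{thm:main_result} uses only union bounds and Markov's inequality on the marginals $\hat p_n(j) \sim \mathrm{Binomial}(n,p(j))/n$, which are unchanged under correlation, and the lower bound comes from keeping only the first coordinate together with the classical estimate $\Ebb|\hat p_n(1)-p(1)| \asymp p(1)\land\sqrt{p(1)/n}$. Your remark that one should verify no step tacitly uses joint concentration is the right bookkeeping point; the paper explicitly makes this observation, and the only non-obvious piece (the bound $\Delta_n^-(\p)\leq\sqrt{S(\p)/n}$ imported from Cohen et al.) indeed rests only on marginal tails.
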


We emphasize that the gap between the upper and lower bounds from \cref{cor:correlated_case} can be large in general, but we show in \cref{sec:correlated_case} that these are the tightest bounds achievable if one only uses the mean statistic $\p$ to describe the distribution $\mu$. As an extreme example, if $p(1)=p(i)$ for all $i\geq 1$, we can consider the perfectly-correlated case when $\mu$ is such that for $X\sim\mu$, almost surely $X(j)=X(1)$ for all $j\geq 1$. In that case, understanding $\Delta_n(\mu)$ reduces to computing the deviation from the mean for a single binomial $Y\sim \Bcal(n,q)$ where $q=p(1)\in[0,\frac{1}{2}]$. It is well known that in this case, $\Ebb|Y-nq| \asymp nq\land \sqrt{nq}$ (e.g. \cite{berend2013sharp}), which corresponds to the lower bounds provided in Corollary \ref{cor:correlated_case}.

En route to proving the tightness of Corollary \ref{cor:correlated_case} for bounds involving only the mean statistic $\p$, we prove a localized version of the classical Dvoretzky-Kiefer-Wolfowitz (DKW) inequality \citep{massart1990tight} which is of independent interest. Given $n$ i.i.d.\ samples $X_1,\ldots, X_n$, from a real-valued random variable $X$, let $F:x\mapsto \Pbb(X\leq x)$ be the cumulative distribution function (CDF) of $X$, and let $F_n:x\mapsto \frac{1}{n} \sum_{i=1}^n \1(X_i\leq x)$ be the empirical CDF. The standard DKW theorem shows that the deviations of $F_n(x)$ can be bounded uniformly in $x$.

\begin{theorem}[DKW theorem \citep{massart1990tight}]
\label{thm:dkw}
    Let $X_1,\ldots,X_n$ be i.i.d.\ samples and denote by $F$ (respectively $F_n$) the true CDF (respectively empirical CDF). Then, for any $t\geq 0$,
    \begin{equation*}
        \Pbb\paren{\sup_{x\in\Rbb} |F_n(x) - F(x)| > \frac{t}{\sqrt n} } \leq 2e^{-2 t^2}.
    \end{equation*}
\end{theorem}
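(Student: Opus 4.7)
The plan is to prove the DKW inequality by first reducing to uniform samples on $[0,1]$ and then bounding each one-sided deviation separately with the sharp constant $2$ in the exponent. Via the probability integral transform, if $F$ is continuous then $U_i := F(X_i)$ is uniform on $[0,1]$ and $\sup_x |F_n(x) - F(x)|$ has the same law as $\sup_{u\in[0,1]}|G_n(u) - u|$, where $G_n$ denotes the empirical CDF of the $U_i$. The general (possibly discontinuous) case reduces to this by a routine approximation argument, since the empirical process depends on the $X_i$ only through the induced ranks; so I may assume $X_i$ uniform on $[0,1]$.

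Next, I would peel off the absolute value via the union bound,
\[
\Pbb\paren{\sup_{u} |G_n(u) - u| > t/\sqrt{n}} \leq \Pbb\paren{\sup_{u} (G_n(u) - u) > t/\sqrt{n}} + \Pbb\paren{\sup_{u} (u - G_n(u)) > t/\sqrt{n}},
\]
and invoke the symmetry $u\mapsto 1-u$ to reduce the problem to bounding a single one-sided deviation by $e^{-2t^2}$.

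The main obstacle is retaining the sharp constant $2$: a direct Chernoff or bounded-differences argument easily delivers the correct Gaussian-type decay $e^{-ct^2}$ but with a suboptimal constant. To get the tight exponent, I would express the one-sided deviation in terms of the uniform order statistics as $\sup_u (G_n(u) - u) = \max_{1\leq k\leq n}\paren{k/n - U_{(k)}}_+$, couple the order statistics with the partial sums $S_k/S_{n+1}$ of i.i.d.\ exponentials, and apply Massart's combinatorial ballot-type identity (or an equivalent exponential-martingale argument tailored to uniform order statistics) that controls this maximum with exactly the correct rate. Combining the two symmetric one-sided bounds then yields the claimed factor of $2$ in front of $e^{-2t^2}$, completing the proof.
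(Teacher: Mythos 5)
The paper does not prove this statement: it is quoted as the classical Dvoretzky--Kiefer--Wolfowitz inequality with Massart's sharp constant, with an explicit citation to \citet{massart1990tight}, and is used as a black box for Corollary~\ref{cor:local_dkw} and Theorem~\ref{thm:local_dkw}. So there is no internal proof to compare against.

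Your sketch correctly identifies the standard scaffolding: the probability-integral-transform reduction to i.i.d.\ uniforms, the split of the two-sided event into two one-sided events, and the $u\mapsto 1-u$ symmetry relating them. Two remarks, though. First, the plan as stated defers the entire technical content to ``Massart's combinatorial ballot-type identity (or an equivalent exponential-martingale argument)'' --- but obtaining the exact exponent $2$ (rather than a Hoeffding/McDiarmid-type $e^{-ct^2}$ with suboptimal $c$) \emph{is} Massart's theorem; as a self-contained proof this step is simply a pointer to the reference the paper already cites, so the argument is circular rather than a proof. Second, the union-bound reduction ``bound a single one-sided deviation by $e^{-2t^2}$'' is slightly off for small $t$: Massart's one-sided bound $\Pbb(\sup_u(G_n(u)-u)>t/\sqrt n)\leq e^{-2t^2}$ is established only for $t^2\geq \tfrac{1}{2}\ln 2$; for smaller $t$ the one-sided inequality can fail, and the two-sided bound $2e^{-2t^2}\geq 1$ holds trivially. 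A complete argument must treat this range separately rather than rely on a union of two one-sided bounds valid for all $t\geq 0$. Apart from these points the outline is consistent with how the result is proved in the literature.
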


We aim to bound the deviation of the CDF on a smaller interval $[x_0,x_1]$ instead of the full domain $\Rbb$. Indeed, when the maximum variance of $F(x)$ for $x\in[x_0,x_1]$ is small, one would expect to have stronger empirical deviation bounds than those provided by the vanilla \cref{thm:dkw}. We note that \cite{maillard2021local} provides an exact formula for the localized deviation of the CDF. This can be computed numerically with the formula, but an analytical simple upper bound will be more convenient for our purposes. 
In the following result, we show that one can achieve essentially the same DKW tail bounds uniformly on the interval $[x_0,x_1]$ as those for the single random variable $F_n(x)$ for $x\in[x_0,x_1]$ that has maximum variance. The proof is deferred to \cref{sec:proof_local_dkw}.

\begin{theorem}\label{thm:local_dkw}
    Let $X_1,\ldots,X_n$ be i.i.d.\ samples and denote by $F$ (respectively $F_n$) the true CDF (respectively empirical CDF). Then, for any $x_0\leq x_1\in\Rbb\cup\{\pm\infty\}$ and $t\geq 0$, if $V = \max_{x\in[x_0,x_1]} F(x)(1-F(x))$ (with the convention $F(-\infty)=0$ and $F(+\infty)=1$), we have
    \begin{equation*}
        \Pbb\paren{\sup_{x\in [x_0,x_1]} |F_n(x) - F(x)| > t \sqrt{\frac{V}{n}} } \leq c_1 e^{-c_2 \min (t^2,t\sqrt{nV})},
    \end{equation*}
    for some universal constants $c_1,c_2>0$.
\end{theorem}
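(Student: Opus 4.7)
The plan is to apply a sub-gamma concentration inequality for bounded empirical processes (Bousquet's inequality) to the VC class $\Fcal = \{\pm\1(\cdot \leq x) : x \in [x_0,x_1]\}$, after suitably bounding the expected supremum $M_n := \Ebb \sup_{x \in [x_0,x_1]} |F_n(x) - F(x)|$. The maximum variance $V = \max_{x \in [x_0,x_1]} F(x)(1-F(x))$ is attained either at an endpoint or at a point where $F(x) = 1/2$; splitting $[x_0,x_1]$ at the crossing point (if any) and using the symmetry $F \leftrightarrow 1-F$, $F_n \leftrightarrow 1-F_n$, we may assume $F(x_1) \leq 1/2$. Writing $b := F(x_1)$, so that $V \asymp b$, the target reduces to showing $\Pbb(\sup_{x \in [x_0,x_1]}|F_n(x) - F(x)| > t\sqrt{b/n}) \lesssim \exp(-c\min(t^2, t\sqrt{nb}))$.

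To bound $M_n$ we condition on $K := |\{i : X_i \leq x_1\}| \sim \text{Bin}(n, b)$. Given $K$, the samples with $X_i \leq x_1$ are i.i.d.\ from the conditional distribution on $(-\infty, x_1]$, whose CDF is $\tilde F(x) := F(x)/b$; letting $\tilde F_K$ be their empirical CDF, one has $F_n(x) = (K/n)\tilde F_K(x)$ and $F(x) = b\,\tilde F(x)$ for $x \leq x_1$, which yields
\begin{equation*}
    \sup_{x \in [x_0,x_1]} |F_n(x) - F(x)| \leq \frac{K}{n}\sup_{x}|\tilde F_K(x) - \tilde F(x)| + \abs{K/n - b}.
\end{equation*}
Applying the standard DKW inequality (\cref{thm:dkw}) conditionally on $K$ gives $\Ebb[\sup_x|\tilde F_K - \tilde F| \mid K] \lesssim 1/\sqrt{K}$, and hence $\Ebb[\tfrac{K}{n}\sup_x |\tilde F_K - \tilde F|] \lesssim \Ebb\sqrt{K}/n \leq \sqrt{b/n}$ by Jensen; combined with $\Ebb|K/n - b| \leq \sqrt{b/n}$ from the variance of $K$, this yields $M_n \lesssim \sqrt{b/n}$.

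Finally, Bousquet's inequality applied to $\Fcal$ (envelope $1$, weak variance $\leq V \lesssim b$) yields, for $Z := n\sup_x|F_n(x) - F(x)|$,
\begin{equation*}
    \Pbb\paren{Z \geq \Ebb Z + \sqrt{2t(nb + 2\Ebb Z)} + t/3} \leq e^{-t}.
\end{equation*}
Plugging in $\Ebb Z \lesssim \sqrt{nb}$ and performing the standard sub-gamma manipulation to absorb the expectation into the deviation term produces the target inequality, after rescaling $t$ by a universal constant. The main obstacle is not any single deep step but the careful bookkeeping: the splitting at $F=1/2$ and the symmetry argument, the conditional decomposition around $K$ (which is what unlocks the localized variance factor), and the sub-gamma algebra in the degenerate regime $nb \lesssim 1$ (where the bound becomes essentially trivial since $|F_n - F| \leq 1$). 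All of these are standard manipulations once the decomposition above and Bousquet's inequality are in place.
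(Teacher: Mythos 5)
Your proof is correct but takes a genuinely different route from the paper's. The paper's argument is elementary and self-contained: after reducing to the uniform distribution by a change of variables, it performs explicit dyadic chaining — Bernstein's inequality on the counts falling in dyadic subintervals of $[q/2,q]$ (Lemma~\ref{lemma:uniform_interval}), a union bound over scales, and then a sum over the decomposition $[\delta,q]=\bigcup_u[q/2^{u+1},q/2^u]$ to reach half-infinite intervals (Corollary~\ref{cor:local_dkw}). You instead invoke Bousquet's concentration inequality for bounded empirical processes, which absorbs all of the chaining into the two parameters it needs: the maximal variance $V$ and the expected supremum $M_n$. The genuinely new content in your version is therefore the expectation bound $M_n\lesssim\sqrt{b/n}$, which you obtain via a clean conditioning on $K=|\{i:X_i\leq x_1\}|$ — plain DKW applied to the $K$ conditional samples together with Jensen on $\Ebb\sqrt{K}\leq\sqrt{nb}$ — and it is precisely this conditioning that unlocks the localized variance factor. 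Your route is shorter and conceptually tidier, at the cost of leaning on a substantially heavier off-the-shelf tool (Talagrand-type concentration for empirical processes), whereas the paper needs nothing beyond Bernstein. One small observation in your favor: your preliminary reduction — splitting at the point where $F$ crosses $\tfrac12$ and symmetrizing the upper piece — is the more careful version; the paper simply falls back to plain DKW whenever $F(x_0)\leq\tfrac12\leq F(x_1)$, which implicitly presumes $V\asymp 1$ and can fail if $F$ has a large atom jumping across $\tfrac12$, so the two-piece union bound you gesture at is in fact what is needed there. The bookkeeping you defer (the sub-gamma rearrangement of Bousquet's bound and the degenerate regime $nb\lesssim 1$, which requires a short direct Bernstein estimate on $K$ rather than being literally trivial) is routine and goes through.
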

A similar result recently appeared in~\citet{bartl2023variance} which gives a variance-dependent DKW inequality. They show that for some absolute constants $c,c'>0$ and any $t \geq c\sqrt{\ln\ln n}$,
\begin{equation*}
\Pbb\paren{\exists x\in I_t \text{ s.t. } \abs{F_n(x) - F(x)} > t\sqrt\frac{F(x)(1-F(x))}{n}} \leq 2e^{-c't^2},
\end{equation*}
where $I_t = \{x \mid   t\leq \sqrt{nF(x)(1-F(x))}\}$ is precisely the set of points falling in the sub-Gaussian regime in our \cref{thm:local_dkw}. Note that the width of their confidence-band depends on the variance of the empirical CDF at that point. This is in a contrast with our result, where having confidence-band of uniform width allowed us to derive a bound valid for all $t\geq 0$.

\subsection{Non-asymptotic bounds for general distributions on $[0,1]$}\label{ssec:general_distributions}
The results so far focused on the case when the distributions are supported on $\{0,1\}^\mathbb{N}$. However, some of the results can be generalized for the case of $[0,1]^\mathbb{N}$, as detailed below.

\begin{corollary}\label{cor:continuous[0,1]}
    Let $\mu$ be a distribution on $[0,1]^\Nbb$. Let $\sigma^2(i) = \text{Var}_{X\sim\mu}(X_i)$ for $i\geq 1$ be the variance of coordinate $i$. Without loss of generality, suppose that $\mb{\sigma}^2$ is decreasing.
    \begin{itemize}
        \item If for all $j\geq 1$, one has $\sigma^2(j)\leq \frac{1}{2nj}$, then
        \begin{equation*}
           \Delta_n(\mu) \lesssim \frac{1}{n}\land\sqrt{\frac{\sum_{j\geq 1}\sigma^2(j)}{n}}.
        \end{equation*}
        \item Otherwise,
        \begin{equation*}
            \Delta_n(\mu) \lesssim   1\land \sup_{j\geq 1} \paren{\sqrt{\frac{\sigma^2(j)\ln(j+1)}{n}} \lor  \frac{\ln(j+1)}{n\ln\paren{2+\frac{\ln(j+1)}{n\sigma^2(j)}}}  }.
        \end{equation*}
    \end{itemize}
\end{corollary}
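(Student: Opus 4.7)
The plan is to extend the upper bounds of \cref{thm:main_result} and \cref{cor:correlated_case} from $\{0,1\}$-valued to $[0,1]$-valued marginals by substituting Bernstein's inequality for the exact Bernoulli tail bounds used in those proofs. The key observation is that for each coordinate $j$, the summands $X_i(j) - p(j)$ are independent, centered, bounded by $1$ in absolute value, and have variance $\sigma^2(j)$, so Bernstein's inequality yields
\begin{equation*}
    \Pbb\paren{|\hat p_n(j) - p(j)| > t} \leq 2\exp\paren{-\frac{nt^2/2}{\sigma^2(j)+t/3}},
\end{equation*}
which matches, up to universal constants, the tail of a centered $\operatorname{Binomial}(n,\sigma^2(j))/n$ deviation, and in particular exhibits exactly the sub-Gaussian/sub-exponential crossover at $t\asymp\sigma^2(j)$ that drives the three regimes of $\phi_{J,q}$.

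For the second case (general variance profile), I would mimic the upper-bound argument of \cref{cor:correlated_case}: first reduce $\Delta_n(\mu)$ to a supremum over step-function variance profiles $\mb{\sigma}^2_{J,q}$ via tail summation, then bound $\Delta_n$ for each step profile by a union bound of the Bernstein tail over the $J$ active coordinates, followed by integration over $t$. Since only marginal tail estimates are consumed in that argument — no joint coupling of coordinates is needed — substituting Bernstein for the Bernoulli tail reproduces the announced $\sup_j \phi_{j,\sigma^2(j)}(n)$, with $\sigma^2(j)$ playing the role previously played by $p(j)$.

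For the first case ($\sigma^2(j)\leq\frac{1}{2nj}$), the bound $\sqrt{\sum_j\sigma^2(j)/n}$ follows from Jensen together with $\|\cdot\|_\infty \leq \|\cdot\|_2$:
\begin{equation*}
    \Delta_n(\mu) = \Ebb\|\hat \p_n-\p\|_\infty \leq \sqrt{\Ebb\|\hat \p_n-\p\|_2^2} = \sqrt{\frac{1}{n}\sum_{j\geq 1}\sigma^2(j)}.
\end{equation*}
The companion $\frac{1}{n}$ bound then comes for free by specializing the second-case bound to this regime: under $\sigma^2(j)\leq 1/(2nj)$, one checks $\sqrt{\sigma^2(j)\ln(j+1)/n}\leq \frac{1}{n}\sqrt{\ln(j+1)/(2j)} \lesssim 1/n$, and $\ln(j+1)/(n\sigma^2(j))\geq 2j\ln(j+1)$ forces the sub-exponential term to also be $\lesssim 1/n$. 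Taking the minimum of the two bounds gives the stated estimate.

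The main obstacle I anticipate is the tail-summation reduction to step-function profiles in the continuous setting, since the Bernoulli version may implicitly exploit the integer-valued structure of the coordinate counts. Replacing it with a Bernstein-based monotonicity of tails in the variance parameter $\sigma^2(j)$ should suffice, but care is needed to ensure that the constants track cleanly across the three regimes of $\phi_{J,q}$, in particular at the boundary between the sub-Gaussian and sub-exponential parts of the Bernstein bound.
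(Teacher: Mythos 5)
The central technical obstacle here is not the one you anticipate (integer-valued structure), but the choice of concentration inequality. Your plan substitutes Bernstein for the Bernoulli tail, but Bernstein's exponent $\frac{nt^2/2}{\sigma^2(j)+t/3}$ \emph{saturates} as $\sigma^2(j)\to 0$: for fixed $t$ it tends to $\frac{3nt}{2}$, a quantity independent of $j$. Thus for any fixed threshold $t$, the union bound $\sum_{j\geq 1}\Pbb(|\hat p_n(j)-p(j)|>t)\leq\sum_{j\geq 1} 2e^{-\frac{nt^2/2}{\sigma^2(j)+t/3}}$ diverges whenever the sequence $\mb{\sigma}^2$ has infinitely many nonzero entries, which is the typical case. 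This is precisely the regime that matters in the first bullet (all $\sigma^2(j)\leq\frac{1}{2nj}$), and it also arises in the second bullet for the tail coordinates with tiny variance. Your Bernstein tails therefore cannot be summed, and the reduction you propose does not close. The paper avoids this by invoking Bennett's inequality $\Pbb\paren{\hat p_n(j)-p(j)\geq t}\leq e^{-n\sigma^2(j)\h(t/\sigma^2(j))}$, whose exponent behaves like $nt\ln\frac{t}{\sigma^2(j)}$ as $\sigma^2(j)\to 0$ and therefore \emph{grows without bound} as the variance shrinks. This Poissonian improvement over Bernstein is exactly what makes the bound $\Pbb(\hat p_n(j)\geq p(j)+4k/n)\lesssim (ki)^{-ck}$ decay in $i$, so the union bound over all coordinates converges. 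Relatedly, your claim that Bernstein "matches, up to universal constants, the tail of a centered $\operatorname{Binomial}(n,\sigma^2(j))/n$ deviation" is false in the small-variance regime: the binomial tail has Poissonian decay in that regime, which Bernstein does not reproduce.

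A second, smaller gap: you derive the $\frac{1}{n}$ bound of the first bullet by "specializing the second-case bound." But the second-case bound is proved (in the paper, via Proposition~\ref{prop:reduction} applied to the surrogate binomial quantiles $\varepsilon_{i,\sigma^2(i)}(n)$) only under the hypothesis that some $\varepsilon_{i,\sigma^2(i)}(n)\geq 0$, i.e.\ that some $\sigma^2(j)$ exceeds the $\frac{1}{2nj}$ threshold — the opposite of the regime you are in. The calculation showing the formula would evaluate to $\lesssim\frac{1}{n}$ in that regime is correct, but it is not a proof because the formula is not established there; a separate concentration argument is required, which the paper supplies with Bennett plus the estimate in Lemma~\ref{lemma:estimates_kl}.

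On the positive side, your derivation of the companion $\sqrt{\sum_j\sigma^2(j)/n}$ bound via $\|\cdot\|_\infty\leq\|\cdot\|_2$ and Jensen,
\begin{equation*}
    \Ebb\|\hat\p_n-\p\|_\infty\leq\sqrt{\Ebb\|\hat\p_n-\p\|_2^2}=\sqrt{\frac{1}{n}\sum_{j\geq 1}\sigma^2(j)},
\end{equation*}
is correct and cleaner than the paper's Chebyshev-plus-tail-summation argument, and is a genuine (small) improvement in exposition.
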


The proof is given in \cref{sec:proof_continuous01}.
Given that the random variables are supported on $[0,1]$, we can use the inequality $\sigma^2(i) \leq p(i)$ for all $i\geq 1$ to obtain similar (but weaker) bounds as in Corollary \ref{cor:continuous[0,1]} but replacing the variances $\sigma^2(i)$ by the means $p(i)$. As for the case of distributions on $\{0,1\}^\Nbb$, the upper bounds from the previous result are not tight in general, however, these are the tightest bounds achievable if one only uses the variance statistic $\mb{\sigma}^2$. In particular, the case of independent Bernoulli random variables characterized in \cref{thm:main_result} always achieves the upper bound except in the regime when $\sum_{j\geq 1}\sigma^2(j)\leq \frac{1}{2n}$. In that case, we can show that the upper bound is attained not by random variables supported on $\{0,1\}$, but on $\{0,\sqrt{2n\sum_{j\geq 1}\sigma^2(j)}\}$. We refer to \cref{sec:proof_continuous01} for further details.

\subsection{Expected empirical deviations in $\ell^q$ norms}\label{ssec:lq_norms}

While the infinite norm deviation $\Delta_n(\mu) = \Ebb\|\hat \p_n - \p\|_\infty$ is the main focus of this paper, a natural question is whether we can obtain similar results for general $\ell^q$-norm expected deviations for $q\geq 1$. We have the following characterization for the decay of the expected $\ell^q$ deviation.

\begin{proposition}\label{prop:characterization_lq_convergence}
    Let $\p\in[0,\frac{1}{2}]_{\downarrow 0}^\Nbb$. Then, $\lim_{n\to\infty} \Ebb\|\hat\p_n-\p\|_q=0$ if and only if $\|\p\|_1<\infty$. Moreover, if $\|\p\|_1=\infty$, then $\|\hat\p_n-\p\|_q=\infty\;(a.s.)$.
\end{proposition}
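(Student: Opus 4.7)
The plan is to prove the two implications separately: the forward direction via dominated convergence, the reverse via Kolmogorov's three-series theorem together with a pointwise lower bound on $\Ebb|\hat p_n(j)-p(j)|^q$.

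For the forward direction, suppose $\|\p\|_1<\infty$. I start from the identity $\Ebb\|\hat\p_n-\p\|_q^q=\sum_{j\geq 1}\Ebb|\hat p_n(j)-p(j)|^q$ and use two simple facts. Since $q\geq 1$ and $|\hat p_n(j)-p(j)|\leq 1$, we have $|\hat p_n(j)-p(j)|^q\leq |\hat p_n(j)-p(j)|\leq \hat p_n(j)+p(j)$, so $\Ebb|\hat p_n(j)-p(j)|^q\leq 2p(j)$, which is summable. Moreover, for each fixed $j$, $\hat p_n(j)\to p(j)$ almost surely by the strong law of large numbers, and bounded convergence yields $\Ebb|\hat p_n(j)-p(j)|^q\to 0$ as $n\to\infty$. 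Dominated convergence on $\Nbb$ with counting measure then gives $\Ebb\|\hat\p_n-\p\|_q^q\to 0$, and Jensen's inequality applied to the concave map $x\mapsto x^{1/q}$ upgrades this to $\Ebb\|\hat\p_n-\p\|_q\to 0$.

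For the reverse direction, suppose $\|\p\|_1=\infty$. I aim to prove the stronger statement that $\|\hat\p_n-\p\|_q=\infty$ almost surely, which in particular rules out convergence of the expectation. Set $Z_j:=|\hat p_n(j)-p(j)|^q\in[0,1]$; since the samples come from a product distribution, the $Z_j$ are independent across $j$. The key step is a pointwise lower bound $\Ebb Z_j\geq c_n\, p(j)$ for some constant $c_n>0$ depending only on $n$ and $q$, proved by splitting on the magnitude of $p(j)$. For $p(j)\leq \frac{1}{2n}$, the event $\{\hat p_n(j)=1/n\}$ has probability $np(j)(1-p(j))^{n-1}$, which is at least $np(j)/2$ in this regime, and on this event $|\hat p_n(j)-p(j)|\geq \frac{1}{2n}$; this gives $\Ebb Z_j\geq \frac{np(j)}{2}\cdot (1/(2n))^q$. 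For $p(j)\geq \frac{1}{2n}$, the event $\{\hat p_n(j)=0\}$ has probability at least $(1/2)^n$ (using $p(j)\leq 1/2$) and yields $|\hat p_n(j)-p(j)|=p(j)\geq \frac{1}{2n}$, giving a uniform lower bound that again exceeds $c_n p(j)$ since $p(j)\leq 1/2$. Summing across $j$, $\sum_{j\geq 1}\Ebb Z_j\geq c_n\sum_{j\geq 1} p(j)=\infty$. Since the $Z_j$ are independent, non-negative, and bounded in $[0,1]$, Kolmogorov's three-series theorem implies that $\sum_j Z_j$ fails to converge almost surely; monotonicity of the partial sums then forces $\sum_j Z_j=\infty$ almost surely, so $\|\hat\p_n-\p\|_q^q=\infty$ a.s.

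The main obstacle is the pointwise lower bound on $\Ebb Z_j$: the naive choice $\{\hat p_n(j)=0\}$ alone only gives $\Ebb Z_j\gtrsim p(j)^q$, and for $q>1$ this can be summable even when $\sum p(j)=\infty$ (for example $p(j)=1/j$ with $q=2$). The resolution is to use the event that exactly one of the $n$ samples equals $1$ in the small-$p(j)$ regime, which produces a deviation of order $1/n$ rather than of order $p(j)$, eliminating the unfavorable $q$-th power from the summand and leaving a linear dependence on $p(j)$ that restores divergence.
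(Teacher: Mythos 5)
Your proof is correct, but it takes a genuinely different route from the paper in both directions. For the forward implication ($\|\p\|_1<\infty$), the paper bounds $\Ebb\|\hat\p_n-\p\|_q\leq\Ebb\|\hat\p_n-\p\|_1\asymp\sum_j p(j)\land\sqrt{p(j)/n}$ and then splits the sum into a finite head (which decays as $n^{-1/2}$) and a small tail; you instead expand $\Ebb\|\hat\p_n-\p\|_q^q$ by Tonelli, dominate each summand by $2p(j)$, apply dominated convergence on $\Nbb$, and finish with Jensen's inequality $\Ebb\|\cdot\|_q\leq(\Ebb\|\cdot\|_q^q)^{1/q}$. Both are clean; yours incidentally shows that $\Ebb\|\hat\p_n-\p\|_q^q\to 0$ as well, a slightly stronger statement. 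For the reverse implication ($\|\p\|_1=\infty$), the paper uses the second Borel--Cantelli lemma directly at the sample level: since $\sum_j p(j)=\infty$ and the coordinates are independent, a single sample already has $X(j)=1$ for infinitely many $j$ almost surely, hence $\hat p_n(j)\geq 1/n$ for infinitely many $j$, and since $p(j)\to 0$ this gives infinitely many $j$ with $|\hat p_n(j)-p(j)|\geq\tfrac{1}{2n}$, so the $\ell^q$ norm is infinite. You instead establish a pointwise lower bound $\Ebb|\hat p_n(j)-p(j)|^q\geq c_n p(j)$ via the ``exactly one success'' event (which you correctly identify as the crux: the naive ``no success'' event only gives $p(j)^q$, which can be summable), conclude $\sum_j\Ebb Z_j=\infty$, and invoke the converse of Kolmogorov's three-series theorem plus the $0$--$1$ law to get $\sum_j Z_j=\infty$ a.s. Your argument is heavier machinery than needed --- the paper's Borel--Cantelli argument bypasses moment computations entirely and requires no appeal to the three-series theorem --- but it is valid, and the trick with the single-success event is a nice observation that also appears implicitly in the paper's analysis of the Poissonian regime elsewhere (Proposition~\ref{prop:small_p}).
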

The proof is given in \cref{sec:lq_norms}.
The analysis of the convergence of $\Ebb\|\hat\p_n-\p\|_q$ is quite different from the $\ell^\infty$ case since for instance the quantity $\Ebb \|\hat \p_n - \p\|_q^q$ can be computed directly as a sum of expectations.
In particular, one can obtain bounds on the expected $\ell^q$ deviation $\Ebb\|\hat \p_n - \p\|_q$ using the following Jensen inequalities,
\begin{equation*}
    \paren{\sum_{j\geq 1}(\Ebb|\hat p_n(j)-p(j)|)^q}^{1/q} \leq \Ebb \|\hat \p_n-\p\|_q \leq (\Ebb \|\hat \p_n-\p\|_q^q)^{1/q}.
\end{equation*}
These bounds give the correct asymptotic convergence rate of the expected $\ell^q$ deviation when $q\geq 2$ up to a factor $\Theta(\sqrt q)$.

\begin{proposition}\label{prop:lq_asymptotic}
    Let $\p\in[0,\frac{1}{2}]_{\downarrow 0}^\Nbb$ such that $\|\p\|_1<\infty$, and $q\geq 2$. Then,
    \begin{equation*}
        1\lesssim \liminf_{n\to\infty} \sqrt{\frac{n}{\|\p\|_{q/2}}} \Ebb\|\hat\p_n-\p\|_q \leq \limsup_{n\to\infty} \sqrt{\frac{n}{\|\p\|_{q/2}}} \Ebb\|\hat\p_n-\p\|_q  \lesssim \sqrt q.
    \end{equation*}
\end{proposition}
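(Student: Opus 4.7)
The plan is to sandwich $\Ebb\|\hat\p_n-\p\|_q$ by the two Jensen inequalities displayed just before the proposition,
\begin{equation*}
    \paren{\sum_{j}\paren{\Ebb|\hat p_n(j)-p(j)|}^q}^{1/q} \leq \Ebb\|\hat\p_n-\p\|_q \leq \paren{\sum_{j}\Ebb|\hat p_n(j)-p(j)|^q}^{1/q},
\end{equation*}
and to evaluate each side asymptotically in $n$.

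For the upper bound, I will apply a Rosenthal-type moment inequality to the centered Bernoulli sum $n(\hat p_n(j)-p(j))=\sum_{i=1}^n(X_i(j)-p(j))$. The independent centered summands satisfy $\Ebb(X_i(j)-p(j))^2=p(j)(1-p(j))$ and $\Ebb|X_i(j)-p(j)|^q\leq 2p(j)$ for $q\geq 2$, so Rosenthal's inequality in the form keeping $\sum_i\Ebb|\xi_i|^q$ (as opposed to a sup-norm form) yields, for absolute constants $c_1,c_2$,
\begin{equation*}
    \Ebb|\hat p_n(j)-p(j)|^q \leq c_1^q \paren{\frac{qp(j)}{n}}^{q/2} + c_2^q q^q \frac{p(j)}{n^{q-1}}.
\end{equation*}
Summing over $j$ uses $\sum_j p(j)^{q/2}=\|\p\|_{q/2}^{q/2}$ and $\sum_j p(j)=\|\p\|_1<\infty$. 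Taking the $q$-th root (invoking the subadditivity of $x\mapsto x^{1/q}$ for $q\geq 1$) and multiplying by $\sqrt{n/\|\p\|_{q/2}}$ gives a bound of the form $c_1\sqrt q + c_2\,q\,\|\p\|_1^{1/q}\big/\bigl(n^{1/2-1/q}\|\p\|_{q/2}^{1/2}\bigr)$. The second term vanishes as $n\to\infty$ for $q>2$, and for $q=2$ (where $\|\p\|_{q/2}=\|\p\|_1$) it reduces to an absolute constant; in both cases the $\limsup$ is $O(\sqrt q)$.

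For the lower bound, I will apply the CLT coordinate-wise. For each $j$ with $p(j)>0$, $\sqrt n(\hat p_n(j)-p(j))$ converges in distribution to $\Ncal(0,p(j)(1-p(j)))$, and the exact second-moment identity $\Ebb[n(\hat p_n(j)-p(j))^2]=p(j)(1-p(j))$ provides the uniform integrability needed to promote this to convergence of first absolute moments, $\sqrt n\,\Ebb|\hat p_n(j)-p(j)|\to\sqrt{2p(j)(1-p(j))/\pi}$. For any finite truncation $J$, continuity of the finite-sum power mean then gives
\begin{equation*}
    \liminf_{n\to\infty}\sqrt n\,\Ebb\|\hat\p_n-\p\|_q \geq \paren{\tfrac{2}{\pi}}^{1/2}\paren{\sum_{j=1}^{J}(p(j)(1-p(j)))^{q/2}}^{1/q}.
\end{equation*}
Using $p(j)\leq 1/2$ (so $1-p(j)\geq 1/2$) and letting $J\to\infty$ yields $\liminf\geq \pi^{-1/2}\|\p\|_{q/2}^{1/2}$, i.e., the lower bound $\gtrsim 1$.

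The main technical point is to use Rosenthal's inequality in its $\sum_i\Ebb|\xi_i|^q$ form rather than a sup-norm form: the latter would contribute an extra term proportional to $|\{j:p(j)>0\}|$, which can be infinite in our setting. The rest is routine: both limit swaps (the $n\to\infty$ with the infinite sum on the upper side, and the $J\to\infty$ after the $\liminf$ on the lower side) are justified by the summability provided by $\|\p\|_1<\infty$.
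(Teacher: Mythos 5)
Your proof is correct but takes a genuinely different route from the paper's. The paper first establishes a sharp two-sided estimate of the $q$-th central moment of a binomial (Lemma~\ref{lemma:central_moments}), feeds this into the Jensen sandwich to get the non-asymptotic bounds of Proposition~\ref{prop:lq_bounds}, and then reads off the asymptotics; the lower bound is driven by the elementary fact $\Ebb|\hat p_n(j)-p(j)|\asymp p(j)\land\sqrt{p(j)/n}$. You instead sidestep the central-moment computation entirely: on the upper side you invoke Rosenthal's inequality in the $\sum_i\Ebb|\xi_i|^q$ form (correctly noting that a sup-norm form would fail here, since the number of active coordinates may be infinite), which gives the same $(qp(j)/n)^{q/2}+q^q p(j)/n^{q-1}$ shape; on the lower side you use the CLT plus uniform integrability (from the exact second-moment identity) to get coordinate-wise convergence of $\sqrt n\,\Ebb|\hat p_n(j)-p(j)|$, then truncate and let $J\to\infty$. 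Both sides are sound, including the $q=2$ corner case you flag where the second Rosenthal term does not vanish but reduces to an $O(1)$ contribution because $\|\p\|_{q/2}=\|\p\|_1$. The tradeoff: the paper's route is more work but delivers non-asymptotic bounds for all $n$ (which it states separately as Proposition~\ref{prop:lq_bounds}), whereas your Rosenthal/CLT combination is lighter and uses only standard off-the-shelf tools, at the cost of the CLT step being intrinsically asymptotic and yielding no finite-$n$ information.
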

Hence the convergence in this case is of the order $\sqrt{\frac{\|\p\|_{q/2}}{n}}$. The proof of this result as well as non-asymptotic bounds can be found in \cref{sec:lq_norms}.

\subsection{High probability bounds for independent Bernoulli}\label{ssec:hp}
While we focused on bounding the expectation of the maximal deviation, we also provide some high probability concentration bounds.
From the bounded differences inequality~\cite[Thm. 6.2]{boucheron2013concentration} -- also known as McDiarmid's inequality -- we can directly have for $\gamma\in(0,1)$,
\[
    \Pbb\left(\abs{\|\hat\p_n - \p\|_\infty - \Delta_n(\p)} \geq \sqrt{\frac{\ln\frac{2}{\gamma}}{2n}}\right) \leq \gamma.
\]
Notably, this bound is often pessimistic and can be significantly tightened. To write the high-probability bounds concisely, we extend the definition of the quantities $\phi_{J,q}(n)$ to all reals $J>0$. For $J\geq 1$, we extend the definition with the same formula in Eq~\eqref{eq:definition_phi}.
For $J\in (0,1)$, we pose
\begin{equation*}
    \phi_{J,q}(n) := e^{-1/J}\sqrt{\frac{q}{n}}.
\end{equation*}
We are now ready to state the high-probability bounds.

\begin{proposition}\label{prop:high_probability_bounds}
Let $\gamma\in(0,\frac{1}{2})$ and $\p\in[0,\frac{1}{2}]^\Nbb_{\downarrow 0}$ such that there exists $j\geq 1$ with $p(j)\geq \frac{\gamma}{2nj}$. Then, for some universal constants $a_1, a_2>0$,
\begin{equation*}
	\Pbb\paren{\|\hat \p_n - \p\|_\infty \geq a_1 \sup_{j\geq 1}\phi_{\frac{j}{\gamma},p(j)}(n)} \leq \gamma.
\end{equation*}
Also,
\begin{equation*}
\Pbb\paren{\|\hat \p_n - \p\|_\infty \lor \frac{1}{n} \leq a_2 \sup_{j\geq 1}\phi_{\frac{j}{\ln 1/\gamma},p(j)}(n)} \leq \gamma.
\end{equation*}

Let $\p\in[0,\frac{1}{2}]^\Nbb_{\downarrow 0}$ such that $p(j)\geq \frac{\gamma}{2nj}$ for all $j\geq 1$. Then,
\begin{equation*}
	\Pbb\paren{\sup_{j\geq 1} \hat p_n(j) \geq \frac{2}{n} }\leq \gamma \quad \text{and} \quad 								\Pbb\paren{\sup_{j\geq 1} \hat p_n(j) = \frac{1}{n} }\asymp 1\land n\sum_{j\geq 1}p(j).
\end{equation*}
\end{proposition}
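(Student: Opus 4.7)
The plan is to handle each of the three claims separately, combining single-coordinate Binomial tail (and anti-concentration) estimates with the independence of coordinates under a product measure.

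For the upper bound (first claim), the approach is a union-bound variant of the upper-bound proof in \cref{thm:main_result}. For each $j\geq 1$, Bernstein's inequality gives
\begin{equation*}
    \Pbb\paren{\abs{\hat p_n(j) - p(j)} \geq t} \leq 2\exp\paren{-\frac{nt^2/2}{p(j) + t/3}}.
\end{equation*}
Plugging $t = a_1 \phi_{j/\gamma, p(j)}(n)$ and checking each of the three regimes in the definition of $\phi$ shows that, for a sufficiently large universal constant $a_1$, the right-hand side is at most $\gamma/(2j(j+1))$. The sub-Gaussian branch is controlled by the $p(j)$ term in the denominator (giving an exponent $\asymp \ln(j/\gamma)$), and the sub-Gamma branch by the $t/3$ term (giving an exponent $\asymp \ln(j/\gamma)/\ln\ln$). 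Summing over $j\geq 1$ yields the claimed probability.

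For the lower bound (second claim), one may assume $a_2\sup_j \phi_{j/\ln(1/\gamma),p(j)}(n) > 1/n$, else the statement is deterministic. Let $j^\star$ be a near-maximizer. Since $\p$ is sorted, every $j\leq j^\star$ satisfies $p(j)\geq p(j^\star)$, and the first $j^\star$ coordinates are independent. The key step is to establish the single-coordinate anti-concentration bound
\begin{equation*}
    \Pbb\paren{\abs{\hat p_n(j) - p(j)} \geq a_2\, \phi_{j^\star/\ln(1/\gamma),p(j^\star)}(n)} \geq \alpha \gtrsim \frac{\ln(1/\gamma)}{j^\star},
\end{equation*}
via Slud-type lower tails in the sub-Gaussian regime and Stirling-based lower estimates $\Pbb(\mathrm{Bin}(n,p)=k)\gtrsim (enp/k)^k e^{-np}/\sqrt{k}$ at the transition integer $k\asymp n\phi_{j^\star/\ln(1/\gamma),p(j^\star)}(n)$ in the sub-Gamma regime. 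Independence then yields $(1-\alpha)^{j^\star}\leq e^{-\alpha j^\star}\leq \gamma$, so at least one of the first $j^\star$ coordinates exceeds the threshold with probability at least $1-\gamma$.

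For the third statement, which I read under the complementary small-probability hypothesis $p(j)\leq \gamma/(2nj)$ for all $j\geq 1$, direct union bounds suffice:
\begin{equation*}
    \Pbb\paren{\sup_{j\geq 1}\hat p_n(j)\geq \tfrac{2}{n}} \leq \sum_{j\geq 1}\binom{n}{2}p(j)^2 \leq \sum_{j\geq 1}\frac{\gamma^2}{8j^2}\leq \gamma.
\end{equation*}
For the $\asymp$ claim, decompose $\{\sup_j \hat p_n(j)=1/n\}=A\cap B$, where $A$ is the event that at least one underlying Bernoulli equals $1$ and $B$ is the complement of the $\{\sup\geq 2/n\}$ event. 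Then $\Pbb(B)\geq 1-\gamma$, while by independence $\Pbb(A)=1-\prod_{j\geq 1}(1-p(j))^n \asymp 1\land n\sum_{j\geq 1}p(j)$ by the standard small-$p$ Poisson approximation, giving the asserted equivalence. The main technical obstacle throughout is the anti-concentration step in the second claim: matching $\phi_{j^\star/\ln(1/\gamma),p(j^\star)}(n)$ in the sub-Gamma regime requires sharp two-sided Stirling-type estimates for Binomial point masses at the transition scale, since the Central Limit Theorem is too blunt to recover the $\ln\ln$ correction in $\phi$ that must ultimately integrate against $\ln(1/\gamma)$ after the product over $j^\star$ coordinates.
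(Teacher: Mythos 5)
Your treatment of the first claim (two-sided Bernstein plus union bound, with weights $\gamma/(2j(j+1))$) is a valid and slightly more streamlined route than the paper, which replays the $\varepsilon_{J,q}$ machinery of Proposition~\ref{prop:reduction} with $c_0/(2J)$ replaced by $\gamma c_0/(2J)$ and handles the left tail via a monotonicity of $D(p-x\parallel p)$ versus $D(p+x\parallel p)$; your Bernstein bound takes care of both tails at once, and the regime-by-regime check you describe is essentially what the paper does implicitly through Proposition~\ref{prop:combined_bound_eps}. Your third part is correct, and you are right that the hypothesis should read $p(j)\leq \frac{\gamma}{2nj}$: the paper's own proof treats exactly that small-probability regime, so the $\geq$ in the proposition statement is a typo.

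The genuine gap is in the second claim. You write that independence gives $(1-\alpha)^{j^\star}\leq e^{-\alpha j^\star}\leq\gamma$ from a single-coordinate bound $\alpha\gtrsim \ln(1/\gamma)/j^\star$, and then argue as if this is always attainable. But $\alpha$ is a probability, so this requires $j^\star\gtrsim \ln(1/\gamma)$: for small $j^\star$ (including $j^\star=1$) your claimed anti-concentration bound asks for $\alpha>1$ and the argument collapses. This small-$j^\star$ regime is exactly why the paper introduces the extension $\phi_{J,q}(n)=e^{-1/J}\sqrt{q/n}$ for $J\in(0,1)$: when $j^\star<\ln(1/\gamma)$, the threshold is $\gamma^{1/j^\star}\sqrt{p(j^\star)/n}$, and the correct tool is not an anti-concentration inequality at a fixed scale but a \emph{small-deviation} estimate of the form in Lemma~\ref{lemma:small_deviations},
\begin{equation*}
    \Pbb_{Y\sim\Bcal(n,p)}\paren{\abs{Y-np}\lor 1\leq c\,\delta\sqrt{np}} \leq \delta,
\end{equation*}
applied with $\delta=\gamma^{1/j^\star}$ to each of the first $j^\star$ coordinates and multiplied over them by independence. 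Your proposal acknowledges that the anti-concentration step is delicate but does not identify that there are two distinct regimes (indices above and below roughly $\ln(1/\gamma)$) requiring separate arguments; without the small-deviation lemma (or an equivalent two-sided local estimate at the $\delta\sqrt{np}$ scale for $\delta$ potentially far below $1/\sqrt{np}$), the bound cannot be closed. Also note a minor point: the quantity $\ln(1/\gamma)/j^\star$ is only the correct asymptotic expression for the required $\alpha$ when $j^\star$ is large; the exact requirement is $\alpha\geq 1-\gamma^{1/j^\star}$, which is the form that survives for all $j^\star$.
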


The proof of this result and further high-probability bounds showing that these are tight in most cases can be found in Appendix~\ref{app:high_prob}.

\comment{
We present 
See Appendix~\ref{app:high_prob} for the proof.
 Notably, when we are not in the poissonian regime (that is, there exists $j \geq 1$ such that $p(j) \geq \frac{1}{2nj}$), then if $\ln\frac{1}{\gamma} \lesssim \sup_{i\geq 1}  p(i)\ln(i) \land n $, then $\Delta_n(p) \gtrsim \sqrt{\frac{\ln\frac{2}{\gamma}}{2n}}$ and we have 
 \[
 \Pbb\left(\|\hat\p_n - \p\|_\infty \asymp \Delta_n(\p) \right) \geq 1- e^{-\sup_{i\geq 1} p(i) \ln(i)} \geq 1-\gamma.
 \]

The bound from~\ref{prop:high_prob_bound_diff} is universal, but sometimes possibly too pessimistic. We present another bound, where we split the sequence $\p$ into $\log\frac{1}{\gamma}$ sequences $\p_1, \p_2, \dots \p_{\log\frac{1}{\gamma}}$ , so that $p(j)$ would be in the $j \mod \lceil \log \frac{1}{\gamma} \rceil$-th sequence. Then we introduce a sequence $\p'_{\gamma}$ such that $\Delta_n(\p'_{\gamma}) \lesssim \Delta_n(\p_{i})$ for all $1 \leq i \leq \log{\frac{1}{\gamma}}$ and we obtain a one sided bound. For the other sided one, we use an already obtained concentration in the proof of Proposition~\ref{prop:reduction}. For the proof we refer to Appendix~\ref{app:high_prob}.

\begin{proposition}\label{prop:high_prob_bound_2}
 There exist strictly positive constants $c,C,b$ with $b > 1$ such that for any $\p\in[0,\frac{1}{2}]_{\downarrow 0}^\Nbb$ and  $p'_\gamma(j) = p(j\lceil\log_b\frac{2}{\gamma}\rceil)$ and $\gamma \in (0,1)$ it holds that
\[
    \Pbb\left(c\Delta_n(\p_\gamma') \leq \|\hat\p_n - \p\|_\infty \leq C\Delta_n(\p)\log_2\left(\frac{2}{\gamma}\right) \right) \geq 1-\gamma.
\]
\end{proposition}

The lower bound from Proposition~\ref{prop:high_prob_bound_2} becomes vacuous when $\p$ decays sufficiently fast that the behavior of the first $\ln\frac1\gamma$ coordinates of the random variable are actually determining the behavior of $\|\hat \p_n - \p\|_\infty$. In that case, we can obtain the following bound.

\begin{proposition}\label{prop:high_prob_bound_3}
    There exists a constant $c > 0$ such that for any $\p\in[0,\frac{1}{2}]_{\downarrow 0}^\Nbb$ and  $\gamma \in (0,\frac{1}{2})$ it holds that 
    \[
    \Pbb\left(\|\hat\p_n - \p\|_\infty \geq ct(\p) \right) \geq 1-\gamma,
    \]
    where 
    \[
    t(\p) = \sup_{\{j | p(j) \geq \frac{1}{n}\}}  \frac{\gamma^{1/j}}{n\ln\left(2+ \frac{\gamma^{1/j}}{np(j)}\right)} \lor \sqrt{\frac{p(j)\gamma^{1/j}}{n}}
    \]
    
\end{proposition}

}

\section{Expected maximum empirical mean deviation for product distributions}
\label{sec:characterization}

In this section, we give the main steps for the proof of our main characterization in \cref{thm:main_result}. For the sake of conciseness, we only present sketches of the proofs here, all formal proofs of this section are given in \cref{sec:proof_characterization}.

\subsection{Preliminaries and general strategy}
We first recall some basic tail inequalities for binomials. In the following, $D(q \parallel p) = q\ln( \frac{q}{p}) + (1-q)\ln(\frac{1-q}{1-p})$ is the KL-divergence between Bernoulli distributions with parameters $p,q\in[0,1]$. We start with the classical Chernoff bound~\citep{boucheron2013concentration}.

\begin{lemma}[Chernoff bound]
\label{lemma:chernoff_bound}
    For any $0\leq p\leq q\leq 1$, letting $Y\sim \Bcal(n,p)$, we have
    \begin{equation*}
        \Pbb\paren{\frac{Y}{n} \geq q}\leq e^{-nD(q \parallel p)}.
    \end{equation*}
\end{lemma}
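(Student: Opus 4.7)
The plan is to use the standard exponential Markov (Chernoff) argument, optimizing the tilt parameter so that the rate function that appears is exactly $D(q\|p)$. Throughout I assume $0<p<q<1$; the boundary cases $p\in\{0,1\}$ or $q\in\{0,1\}$ reduce to trivial verifications using the convention $D(q\|p)=+\infty$ when the quantity would involve $\log 0$.

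The first step is to apply Markov's inequality to the random variable $e^{\lambda Y}$ for an arbitrary $\lambda>0$:
\begin{equation*}
    \Pbb(Y\geq nq)=\Pbb\!\left(e^{\lambda Y}\geq e^{\lambda nq}\right)\leq e^{-\lambda nq}\,\Ebb[e^{\lambda Y}].
\end{equation*}
Since $Y\sim\Bcal(n,p)$ is a sum of $n$ i.i.d.\ Bernoulli$(p)$ random variables, its moment generating function factorizes as
\begin{equation*}
    \Ebb[e^{\lambda Y}]=\bigl(1-p+p\,e^{\lambda}\bigr)^n,
\end{equation*}
yielding the bound $\Pbb(Y\geq nq)\leq\bigl(e^{-\lambda q}(1-p+p\,e^{\lambda})\bigr)^n$ for every $\lambda>0$.

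The second step is to minimize the exponent $-\lambda q+\ln(1-p+p\,e^{\lambda})$ over $\lambda$. Setting the derivative to zero gives $\frac{p\,e^{\lambda}}{1-p+p\,e^{\lambda}}=q$, which solves to $e^{\lambda^\star}=\frac{q(1-p)}{p(1-q)}$; this value is nonnegative precisely because $q\geq p$, so $\lambda^\star\geq 0$ is a valid choice. Plugging back one finds $1-p+p\,e^{\lambda^\star}=\frac{1-p}{1-q}$, and a direct algebraic simplification shows
\begin{equation*}
    -\lambda^\star q+\ln(1-p+p\,e^{\lambda^\star})=-q\ln\tfrac{q}{p}-(1-q)\ln\tfrac{1-q}{1-p}=-D(q\|p).
\end{equation*}
Combining with step one gives exactly $\Pbb(Y/n\geq q)\leq e^{-nD(q\|p)}$.

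There is no real obstacle here: the argument is a textbook computation, and the only thing to be careful about is the boundary behavior (the edge cases $p=0$, $q=1$, or $p=q$), which one handles by either continuity of $D(q\|p)$ or by observing that both sides of the inequality are trivially consistent (equal to $0$, $1$, or $+\infty$ as appropriate). Since the result is stated in the excerpt with a citation to \citet{boucheron2013concentration}, one may also simply invoke it; the plan above is the short self-contained derivation.
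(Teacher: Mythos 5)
Your derivation is correct and is the standard exponential-Markov argument with the optimal tilt $\lambda^\star = \ln\frac{q(1-p)}{p(1-q)}$, which is exactly what the cited reference uses. The paper itself does not reproduce a proof of this lemma but simply invokes \citet{boucheron2013concentration}, so your self-contained computation matches the intended approach.
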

We will also use the following anti-concentration bound from \citet[Theorem 9]{zhang2020non}.

\begin{lemma}[\cite{zhang2020non}]
\label{lemma:anti_concentration}
    There exist constants $0<c_0<\frac{1}{4}$ and $C\geq 1$, such that for any $0<p\leq q <1$ satisfying  $\frac{1}{n}\leq q \leq \frac{1+p}{2}$, letting $Y\sim\Bcal(n,p)$, we have
    \begin{equation*}
        \Pbb\paren{\frac{Y}{n} \geq q} \geq c_0 e^{-C n D(q\parallel p)}.
    \end{equation*}
\end{lemma}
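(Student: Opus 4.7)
The plan is to lower bound the tail by the single point mass at $k^\star=\lceil nq\rceil$, apply Stirling's formula, and then absorb the resulting polynomial prefactor into the exponential at the cost of a constant factor $C>1$ in front of $nD(q\|p)$. Using $\{Y=k^\star\}\subseteq\{Y/n\ge q\}$ together with the quantitative Stirling expansion
\begin{equation*}
    \binom{n}{k}\,p^k(1-p)^{n-k}=\sqrt{\frac{n}{2\pi k(n-k)}}\,e^{-nD(k/n\|p)+O(1/n)},
\end{equation*}
I would obtain, writing $q^\star=k^\star/n\in[q,q+1/n]$,
\begin{equation*}
    \Pbb(Y/n\ge q)\ge \Pbb(Y=k^\star)\ge \frac{c_1}{\sqrt{n q^\star(1-q^\star)}}\,e^{-nD(q^\star\|p)}.
\end{equation*}
The hypothesis $q\ge 1/n$ ensures $k^\star\ge 1$, and $q\le\tfrac{1+p}{2}$ keeps $k^\star$ bounded away from $n$, so Stirling applies to both $k^\star!$ and $(n-k^\star)!$.

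Next I would replace $q^\star$ by $q$. Since $\partial_q D(q\|p)=\ln\frac{q(1-p)}{p(1-q)}$ and the constraints $p\le q\le\tfrac{1+p}{2}$ control this logarithm, a mean-value estimate yields $nD(q^\star\|p)\le nD(q\|p)+O(1)$ (in the regime where $p$ is very small, the additive correction is instead absorbed using $D(q\|p)\gtrsim q\ln(q/ep)$, exploiting $q\ge 1/n\gg p$). The additive $O(1)$ is swept into the multiplicative prefactor, so the task reduces to establishing
\begin{equation*}
    \frac{1}{\sqrt{nq(1-q)}}\,e^{-nD(q\|p)}\gtrsim e^{-C\,nD(q\|p)}.
\end{equation*}

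To close this, I would split into two regimes. If $nD(q\|p)\le\alpha$ for a small universal constant $\alpha$, Pinsker's inequality forces $q-p$ to be at most a constant multiple of the binomial standard deviation, so a Berry--Esseen argument gives $\Pbb(Y/n\ge q)\gtrsim 1$, which matches the target since $e^{-CnD(q\|p)}\ge e^{-C\alpha}$ is also a universal constant. If instead $nD(q\|p)>\alpha$, I would invoke the classical saddle-point/local-CLT refinement of Cram\'er's theorem for binomial tails, which upgrades the naive single-term bound to $\Pbb(Y/n\ge q)\gtrsim \frac{1}{\sqrt{nD(q\|p)}}\,e^{-nD(q\|p)}$. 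The elementary inequality $\ln x\le 2x/e$, valid for $x\ge 1$, then absorbs $\tfrac12\ln(nD(q\|p))$ into the exponent by choosing $C=1+1/e$.

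The main obstacle is this last absorption step in the intermediate regime $nD(q\|p)\asymp 1$: the naive single-term Stirling prefactor $1/\sqrt{nq(1-q)}$ is not sharp enough (it is of order $1/\sqrt{np(1-p)}$ even when $q\to p$), and one needs the saddle-point refinement capturing the normal-approximation correction to the Chernoff upper bound of Lemma~\ref{lemma:chernoff_bound}. This refinement is precisely what forces the constant $C>1$ in the statement to be strictly larger than the $C=1$ appearing in Chernoff's upper bound.
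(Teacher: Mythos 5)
The paper itself does not prove this lemma; it cites it verbatim from \citet[Theorem 9]{zhang2020non}, so there is no in-paper argument to compare against. Your proposal is therefore a from-scratch reconstruction, and you do correctly identify the central difficulty (that the single-term Stirling bound has prefactor $1/\sqrt{nq(1-q)}$ which is too small when $nD(q\|p)$ is moderate while $nq(1-q)$ is huge). However, two of your key steps are not actually secured by the tools you name.

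First, in the regime $nD(q\|p)\le\alpha$: Pinsker's inequality gives $D(q\|p)\ge 2(q-p)^2$ and hence $q-p\lesssim 1/\sqrt n$, \emph{not} $q-p\lesssim\sqrt{p(1-p)/n}$. Those two bounds agree only when $p(1-p)$ is bounded away from $0$; for small $p$ you need instead the refined inequality $D(q\|p)\gtrsim (q-p)^2/\max(q,p)$ (or the $\chi^2$-to-KL comparison) to conclude that $q$ is within $O(1)$ standard deviations of $p$. More seriously, Berry--Esseen has error $O\bigl(1/\sqrt{np(1-p)}\bigr)$ and so says nothing when $np$ is $O(1)$, which can happen under the stated hypotheses (e.g.\ $np\asymp nq\asymp 1$ with $q\le e^2p$). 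In that sub-case you must make a separate Poisson-type argument ($\Pbb(Y\ge\lceil nq\rceil)\gtrsim 1$ when $nq\asymp np\asymp 1$), not a CLT one.

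Second, in the regime $nD(q\|p)>\alpha$ you invoke a ``saddle-point refinement'' giving $\Pbb(Y/n\ge q)\gtrsim\tfrac{1}{\sqrt{nD(q\|p)}}e^{-nD(q\|p)}$ without proof, and this is precisely the content of the lemma being proved. Note also that this is \emph{not} the standard Bahadur--Rao form, which reads $\Pbb(Y/n\ge q)\sim\tfrac{1}{|\lambda|\sqrt{2\pi n q(1-q)}}e^{-nD(q\|p)}$ with tilting parameter $\lambda=\ln\tfrac{q(1-p)}{p(1-q)}$. In the Poisson corner ($q\gg p$) one has $\lambda\sqrt{nq(1-q)}\asymp\ln(q/p)\sqrt{nq}\gg\sqrt{nq\ln(q/p)}\asymp\sqrt{nD}$, so the Bahadur--Rao prefactor is strictly \emph{smaller} than $1/\sqrt{nD}$ and does not justify your intermediate claim; you would need a different derivation, or you would have to carry the Bahadur--Rao prefactor through the absorption step directly (which does work, but requires the separate observation that in the Poisson corner $nD\gtrsim nq\lor\ln(q/p)$, so both $\ln\ln(q/p)$ and $\tfrac12\ln(nq)$ are $O(nD)$). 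As written, the proof sketch is a plausible roadmap but the load-bearing inequality is asserted rather than established, and the regime split needs to be refined (e.g., Berry--Esseen handles all of $z\lesssim\sqrt{\ln(np(1-p))}$, after which the single-term bound becomes adequate) to avoid the gaps above.
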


As a first observation, defining $\Delta_n^+(\p) = \Ebb\sup_{j\geq 1}[\hat p_n(j)-p(j)]_+$ and $\Delta_n^-(\p) =\Ebb\sup_{j\geq 1}[p(j)-\hat p_n(j)]_+$, we have the following decomposition,
\begin{equation*}
    \frac{1}{2}(\Delta_n^+(\p) + \Delta_n^-(\p)) \leq  \Delta_n^+(\p) \lor \Delta_n^-(\p) \leq \Delta_n(\p) \leq  \Delta_n^+(\p) + \Delta_n^-(\p).
\end{equation*}

We will show in the rest of this paper that the leading term is $\Delta_n^+(\p)$, which we now focus on. The main intuition is that Bernoulli random variables $\Bcal(p)$ with $p\leq 1/2$ have heavier right tails than left tails. To give estimates for $\Delta_n^+(\p)$ for general values of the sequence $\p$, we first start with a reduction to the case when the profile of $\p$ is ``step-like''. Consider such a vector $\step{J,q}$ with $p(i)=q$ for all $i\in[J]$ and $p(i)=0$ for $i>J$. Then, 
\begin{equation*}
\Pbb\paren{\max_{i\leq J} \{ \hat p(i) - p(i)\} \geq \varepsilon} = 1-(1-\Pbb\paren{\hat p(1) - p(1) \geq \varepsilon})^J.
\end{equation*}
Intuitively, this probability is approximately $1-\exp(J\Pbb\paren{\hat p(1) - p(1) \geq \varepsilon})$. If $\varepsilon$ is the expected maximal deviation, then one would expect this probability above to be bounded away from both $0$ and $1$ by some absolute constants. This motivates the definition of the following quantity $\varepsilon_{J,q}(n)$ for any $J\geq 1$ and $q\in(0,1/2]$, where $c_0\in(0,\frac{1}{4})$ is the same constant as in Lemma \ref{lemma:anti_concentration},
\begin{equation*}
    \varepsilon_{J,q}(n) = \inf\set{\varepsilon\geq 0: \Pbb_{Y\sim\Bcal(n,q)}\paren{\frac{Y}{n}\geq q+\varepsilon}\leq \frac{c_0}{2J} },
\end{equation*}
In particular, note that $q+\varepsilon_{J,q}(n)\in \{0,\frac{1}{n},\ldots,\frac{n-1}{n},1\}$ and that
\begin{equation*}
    \Pbb_{Y\sim\Bcal(n,q)}\paren{\frac{Y}{n}> q+\varepsilon_{J,q}(n)} \leq \frac{c_0}{2J} < \Pbb_{Y\sim\Bcal(n,q)}\paren{\frac{Y}{n}\geq q+\varepsilon_{J,q}(n)}.
\end{equation*}
Our goal is to give a characterization of $\Delta_n^+(\p)$ using these coefficients.

\subsection{Step-like sequences describe the behavior of general sequences}

It turns out that not only $ \Delta_n^+(\step{J,q}) \asymp \varepsilon_{J,q} $, but we even have $\Delta_n^+(\p) \asymp \sup_{i \geq 1} \varepsilon_{i, p(i)}$ for most vectors $\p\in[0,\frac{1}{2}]^\Nbb_{\downarrow 0}$ as shown in the following result.

\begin{proposition}
\label{prop:reduction}
    Let $\p\in[0,\frac{1}{2}]^\Nbb_{\downarrow 0}$. Suppose that there exists $i\geq 1$ such that $\varepsilon_{i,p(i)}(n) \geq 0$. 
    %Then, there exists universal constants $c>0$ such that for all $n\geq 2$,
    Then, there exist universal constants $c,C>0$ such that for all $n\geq 2$,
    \begin{equation*}
        c \cdot \sup_{i\geq 1} \varepsilon_{i,p(i)}(n)  \leq \Delta_n^+(\p) \leq C \cdot \sup_{i\geq 1} \varepsilon_{i,p(i)}(n).
    \end{equation*}
    Further, the upper bound holds for any general distribution $\mu$ on $\{0,1\}^\Nbb$, that is, with $\mb p = \Ebb_{X\sim\mu}[X]$,
    \begin{equation*}
        \Ebb_{X_i\overset{i.i.d.}{\sim}\mu}\sup_{i\in\Nbb}[\hat p_n(i) - p(i)]_+ \leq C \cdot \sup_{i\geq 1} \varepsilon_{i,p(i)}(n).
    \end{equation*}
\end{proposition}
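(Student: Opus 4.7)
The plan is to prove the two inequalities separately: a lower bound for product distributions that crucially uses independence, and an upper bound via tail integration and union bounds that extends to any distribution on $\{0,1\}^\Nbb$ sharing the same mean.

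For the lower bound, I would fix $i^*$ (approximately) attaining the supremum, set $\varepsilon^* = \varepsilon_{i^*,p(i^*)}(n)$, and, defining $f_p(\varepsilon):=\Pbb_{Y\sim\Bcal(n,p)}(Y/n - p \geq \varepsilon)$, use independence to write
\[
\Pbb\!\Bigl(\max_{j\leq i^*}\bigl(\hat p_n(j)-p(j)\bigr) \geq \varepsilon^*\Bigr) \;=\; 1 - \prod_{j=1}^{i^*}\bigl(1 - f_{p(j)}(\varepsilon^*)\bigr) \;\geq\; 1 - \exp\!\Bigl(-\sum_{j=1}^{i^*} f_{p(j)}(\varepsilon^*)\Bigr).
\]
It then suffices to show $f_{p(j)}(\varepsilon^*) \gtrsim 1/i^*$ for every $j \leq i^*$. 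For $j = i^*$ this is exactly the definition of $\varepsilon^*$; for $j < i^*$, where $p(j) \in [p(i^*),\,1/2]$, this reduces to an (essential) monotonicity statement that $f_p(\varepsilon)$ is non-decreasing in $p$ on $[0,1/2]$. Combining with Markov's inequality applied to the non-negative variable $\sup_j[\hat p_n(j)-p(j)]_+$ then gives $\Delta_n^+(\p) \geq \varepsilon^* \cdot \Pbb(\ldots) \gtrsim \varepsilon^*$.

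For the upper bound, I would write $\Delta_n^+(\p) = \int_0^\infty \Pbb(\sup_j[\hat p_n(j)-p(j)]_+ \geq t)\,dt$, split at a threshold $t_0 = K \sup_i \varepsilon_{i,p(i)}(n)$ for a large universal constant $K$, and use the trivial bound $\Pbb \leq 1$ on $[0,t_0]$ to obtain a contribution $\leq t_0$. On $[t_0,\infty)$, the union bound $\Pbb(\sup_j \geq t) \leq \sum_j f_{p(j)}(t)$---which depends only on the marginal laws and therefore holds for any distribution $\mu$ with mean $\p$---combined with the Chernoff estimate $f_{p(j)}(t) \leq e^{-nD(p(j)+t\|p(j))}$ from \cref{lemma:chernoff_bound} turns the remaining integral into a sum, which I would control by dyadically decomposing the index set according to the value of $\varepsilon_{j,p(j)}(n)$. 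The resulting tail contribution is also $O(t_0)$, matching the desired upper bound and automatically proving the ``further'' part of the statement for general $\mu$.

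The main obstacle is the factor bound $f_{p(j)}(\varepsilon^*) \gtrsim 1/i^*$ for $j < i^*$. A direct application of \cref{lemma:anti_concentration} gives $f_p(\varepsilon^*) \geq c_0\, e^{-CnD(p+\varepsilon^*\|p)}$, and the monotonicity of $q \mapsto D(q+\varepsilon\|q)$ on $[0,1/2]$ together with Chernoff applied to the defining inequality of $\varepsilon^*$ produces $nD(p(i^*)+\varepsilon^*\|p(i^*)) \lesssim \ln i^*$, but the constant $C \geq 1$ in the anti-concentration lemma costs an extra power of $i^*$. I expect the clean route is a case split along the three regimes of $\phi_{J,q}(n)$ from \cref{eq:definition_phi}: in the sub-Gaussian regime, $f_p(\varepsilon) \asymp \exp(-n\varepsilon^2/(2p(1-p)))$ combined with the monotonicity of $p(1-p)$ on $[0,1/2]$ yields the bound, while in the Poissonian and intermediate regimes the monotonicity of $f_p$ in $p$ can be verified directly via explicit Binomial tail comparisons.
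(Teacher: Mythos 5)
Your high-level decomposition matches the paper's: lower bound via independence plus a per-coordinate anti-concentration estimate, upper bound via union bound plus tail summation (which indeed extends to general $\mu$ because only marginals are used). However, there are two genuine gaps.

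\textbf{Lower bound.} The claim that $f_p(\varepsilon):=\Pbb_{\Bcal(n,p)}(Y/n - p\geq\varepsilon)$ is non-decreasing in $p$ on $[0,\tfrac12]$ is simply false as stated; with $n=1$, $p=0.4$, $q=0.5$, $\varepsilon=0.55$ we get $f_p(\varepsilon)=0.4 > 0 = f_q(\varepsilon)$, because $q+\varepsilon$ exceeds $1$. The paper never asserts this monotonicity of the tail probability itself; instead, it passes through the KL divergence and uses that $p\mapsto D(p+\varepsilon\parallel p)$ is non-increasing on $[0,\tfrac{1-\varepsilon}{2}]$, with separate treatment of the boundary case $p(j)>\tfrac{1-\varepsilon_i}{2}$, the case $p(j)\leq\varepsilon_i/2$, and a small-$p(j)$ Poissonian case $p(j)\leq 1/n$ handled by an explicit two-term binomial computation. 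You correctly notice that the constant $C\geq 1$ in \cref{lemma:anti_concentration} ``costs a power of $i^\star$,'' but your proposed remedy (a case split along the three regimes of $\phi_{J,q}$) is not what is needed and is considerably more involved. The paper's resolution is a single clean device: convexity of $D(\cdot\parallel p)$ in its first argument gives $D(p(j)+\tfrac{\varepsilon_i}{2C}\parallel p(j))\leq \tfrac{1}{C}D(p(j)+\tfrac{\varepsilon_i}{2}\parallel p(j))$, so the factor $C$ in the anti-concentration exponent cancels exactly, and the lower bound on the deviation is obtained at the reduced threshold $\varepsilon_i/(2C)$ rather than $\varepsilon_i$ itself. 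Your proposal never makes this threshold reduction explicit, and without it the argument does not close.

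\textbf{Upper bound.} You describe the tail integral, the split at $t_0=K\sup_i\varepsilon_{i,p(i)}(n)$, the union bound, and Chernoff --- but Chernoff alone is not enough, and your ``dyadic decomposition of the index set by the value of $\varepsilon_{j,p(j)}(n)$'' does not identify the missing ingredient. The quantity $\varepsilon_{j,p(j)}(n)$ is defined implicitly by a tail-probability threshold, so to deduce from $t\geq K\varepsilon_{j,p(j)}(n)$ a quantitative lower bound on $D(p(j)+t\parallel p(j))$ --- which Chernoff then turns into decay in $j$ --- one needs the \emph{anti-concentration} inequality, applied at $\tilde\varepsilon$, to convert the definition of $\varepsilon_j$ into $D(p(j)+\tilde\varepsilon\parallel p(j))\geq \tfrac{\ln(2j)}{nC}$. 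Convexity of the KL divergence in the first argument then boosts this to $D(p(j)+kC\tilde\varepsilon\parallel p(j))\geq \tfrac{k\ln(2j)}{n}$ for all $k\geq 1$, Chernoff yields $\Pbb(\hat p_n(j)\geq p(j)+kC\tilde\varepsilon)\leq(2j)^{-k}$, the union bound gives $\Pbb(\sup_j[\hat p_n(j)-p(j)]\geq kC\tilde\varepsilon)\leq 2^{-(k-1)}$, and summing over $k$ finishes. Your sketch omits the anti-concentration step entirely; without it there is no route from $\varepsilon_j$ to a usable KL bound, and the tail integral cannot be controlled. You would also need the truncation $\tilde\varepsilon=(\varepsilon\wedge\tfrac14)\vee\tfrac1n$ and the separate argument showing $\varepsilon<\tfrac{1}{4n}$ is impossible under the hypothesis, neither of which your proposal addresses.
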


\paragraph{Sketch of proof.} We start with the lower bound $\Delta_n^+(\p) \gtrsim \sup_{i \geq 1} \varepsilon_{i, p(i)}$. For any index $i\geq 1$ it holds that $\p \geq \step{i, p(i)}$ element-wise and thus we can prove that we have $\Delta_n^+(\p) \gtrsim \Delta_n^+\left(\step{i, p(i)}\right)$ since the tails at the individual coordinates are heavier for $\p$. Now pick index $J$ such that $\varepsilon:= \varepsilon_{J, p(J)} \geq \frac{1}{2} \sup_{i \geq 1} \varepsilon_{i, p(i)}$,  and let $q = p(J)$; then for independent $Y_i \sim \mathcal{B}(n,q)$:
 \begin{equation*}
\Pbb\paren{ \max_{i\leq J} \left\{\frac{Y_i}{n}\right\} \geq q + \varepsilon} = 1-\left( 1-\Pbb\paren{\frac{Y_i}{n} \geq q + \varepsilon)} \right)^J \geq  1- \left(1-\frac{c_0}{2J}\right)^J \geq 1-e^{-c_0/2}.
\end{equation*}
 We finish by applying Markov's inequality:
\begin{equation*}
\Ebb\sqb{ \max_{i\leq J}\abs{\frac{Y_i}{n}-q}}  \geq \Ebb\sqb{ \max_{i\leq J} \left\{\frac{Y_i}{n}\right\} - q} \geq 
\varepsilon \Pbb\paren{ \max_{i\leq J} \left\{\frac{Y_i}{n}\right\} \geq q + \varepsilon} \asymp \varepsilon.
\end{equation*}
That is, $\Delta_{n}(\p) \gtrsim \varepsilon_{J,q} \asymp \sup_{i \geq 1} \varepsilon_{i, p(i)}$.

We next turn to the upper bound $\Delta_n^+(\p) \lesssim \sup_{i \geq 1} \varepsilon_{i, p(i)}$. We use a pair of tight concentration and anti-concentration inequalities to estimate tail probabilities of binomial random variables and then we upper bound the expectation by tail-summation. Let $\varepsilon = \sup_{i\geq 1} \varepsilon_{i, p(i)}(n)$. Thus, for every index $i$ it holds that 
\begin{equation*}
\Pbb\paren{\hat{p}(i) \geq p(i) + \varepsilon} \lesssim \frac{1}{2i}.
\end{equation*}
On the other hand, the anti-concentration inequality from Lemma~\ref{lemma:anti_concentration} for $C \geq 1$ gives
\begin{equation*}
\Pbb\paren{\hat{p}(i) \geq p(i) + \varepsilon} \geq c_0 e^{-CnD(p(i)+\varepsilon \parallel p(i))}.
\end{equation*}
Combining both estimates results in
\begin{equation*}
D(p(i) + \varepsilon \parallel p(i)) \gtrsim \frac{\ln(2i)}{Cn}.
\end{equation*}
By a convexity argument on the KL-divergence we obtain $D(p(i)+kC\varepsilon \parallel p(i) ) \gtrsim k \ln(2i)/n$ for $k\geq 1$. Together with the standard Chernoff bound (Lemma \ref{lemma:chernoff_bound})%, for some constant $C>0$
\begin{equation*}
    \Pbb\paren{\hat p(i)  \geq p(i) + kC\varepsilon} \leq e^{-nD(p(i)+kC\varepsilon \parallel p(i))} \le \frac{1}{(2i)^k}.
\end{equation*}
Then, by the union bound, for any $k\geq 2$,
\begin{equation*}
    \Pbb\paren{\sup_{i \geq 1} \left\{\hat p(i) - p(i) \right\} \geq kC\varepsilon}  \leq \sum_{i\geq 1} \frac{1}{(2i)^k} \leq \frac{1}{2^{k-1}}.
\end{equation*}
Finally, summing the tails yields the desired bound $\Delta_n^+(\p) =   \Ebb\sup_{i\geq 1} \left[\hat p(i) - p(i)\right]_+ \lesssim \varepsilon.$ Note that in the proof of this upper bound, we only needed the union bound. Hence, the upper bound also applies to general non-product distributions $\mu$ on $\{0,1\}^\Nbb$. \hfill $\blacksquare$

\subsection{Estimating the expected deviation for step-like sequences}

We next give estimates on the quantities  $\varepsilon_{J,q}$.
\begin{proposition}
\label{prop:combined_bound_eps}
    There exists a universal constants $C_1\geq 1$ and $c_2>0$ such that for all $n,J\geq 1$ and $q\in (0,1/2]$,
    \begin{equation*}
        \varepsilon_{J,q}(n) \leq C_1 \cdot \phi_{J,q}(n).
    \end{equation*}
    Further, if $1-(1-q)^n> \frac{c_0}{2J}$ (e.g. for $q\geq \frac{c_0}{nJ}$), one has
    \begin{equation*}
        \varepsilon_{J,q}(n) \geq \paren{\frac{1}{n}-q}\lor c_2\cdot  \phi_{J,q}(n).
    \end{equation*}
    On the other hand, if $1-(1-q)^n\leq\frac{c_0}{2J}$, we have $\varepsilon_{J,q}(n) = -q$.
\end{proposition}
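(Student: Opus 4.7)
The plan is to bracket the tail probability $\Pbb_{Y\sim \Bcal(n,q)}(Y/n \geq q+\varepsilon)$ from above by the Chernoff bound (Lemma \ref{lemma:chernoff_bound}) and from below by the anti-concentration bound (Lemma \ref{lemma:anti_concentration}). Both estimates are essentially of the form $\exp(-\Theta(n D(q+\varepsilon \parallel q)))$, so the definition of $\varepsilon_{J,q}(n)$ reduces to inverting $nD(q+\varepsilon \parallel q) \asymp \ln(J+1)$ in $\varepsilon$. The piecewise definition of $\phi_{J,q}(n)$ is, up to universal constants, precisely the resulting inverse, with its three regimes corresponding to the saturated regime ($\varepsilon \asymp 1$), the sub-gamma/Poissonian regime ($\varepsilon \gg q$), and the sub-Gaussian regime ($\varepsilon \lesssim q$). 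The third case of the proposition is immediate from the definition: the hypothesis $1-(1-q)^n \leq c_0/(2J)$ is exactly $\Pbb_{Y\sim\Bcal(n,q)}(Y/n > 0) \leq c_0/(2J)$, so the smallest threshold $q+\varepsilon = 0$ already satisfies the defining condition, giving $\varepsilon_{J,q}(n) = -q$.

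For the upper bound $\varepsilon_{J,q}(n) \leq C_1 \phi_{J,q}(n)$, I would set $\varepsilon = C_1 \phi_{J,q}(n)$ and verify via Lemma \ref{lemma:chernoff_bound} that $nD(q+\varepsilon \parallel q) \geq \ln(2J/c_0)$ in each regime. Regime~1 ($n \leq \ln(J+1)/\ln(1/q)$, $\phi=1$) is trivial since $\varepsilon_{J,q}(n) \leq 1-q \leq C_1 \phi_{J,q}(n)$. In regime~2, setting $x := \ln(J+1)/(nq) \geq e$, we have $\phi_{J,q}(n) = qx/\ln x \gg q$, so the estimate $D(q+\varepsilon \parallel q) \gtrsim \varepsilon\ln(\varepsilon/q)$ (valid for $\varepsilon \gg q$) yields $n\phi_{J,q}(n)\ln(\phi_{J,q}(n)/q) \asymp (\ln(J+1)/\ln x)\cdot \ln(x/\ln x) \asymp \ln(J+1)$, since $\ln\ln x \ll \ln x$. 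In regime~3, $\phi_{J,q}(n) = \sqrt{q\ln(J+1)/n} \lesssim q$ (thanks to $n \geq \ln(J+1)/(eq)$), so the sub-Gaussian estimate $D(q+\varepsilon \parallel q) \gtrsim \varepsilon^2/q$ yields $n\phi_{J,q}(n)^2/q \asymp \ln(J+1)$ directly.

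For the lower bound $\varepsilon_{J,q}(n) \geq (1/n-q) \lor c_2 \phi_{J,q}(n)$ under the assumption $1-(1-q)^n > c_0/(2J)$, the $1/n - q$ term follows from discreteness: any $\varepsilon$ with $q+\varepsilon < 1/n$ yields $\Pbb(Y/n \geq q+\varepsilon) = 1-(1-q)^n > c_0/(2J)$, which is inadmissible in the infimum. For the $c_2 \phi_{J,q}(n)$ term, I would suppose $\varepsilon < c_2 \phi_{J,q}(n)$; if $q+\varepsilon < 1/n$ the discreteness argument already concludes, and otherwise Lemma \ref{lemma:anti_concentration} applies (its upper condition $q+\varepsilon \leq (1+q)/2$ holds for $c_2 \leq 1/4$ since $q\leq 1/2$ and $\phi_{J,q}(n) \leq 1$), reducing the goal to $CnD(q+\varepsilon \parallel q) < \ln(2J)$. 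This is the same inversion as in the upper bound, now using matching upper bounds $D(q+\varepsilon \parallel q) \leq (q+\varepsilon)\ln((q+\varepsilon)/q)$ in regime~2, $D(q+\varepsilon \parallel q) \lesssim \varepsilon^2/q$ in regime~3, and the trivial $D \leq \ln(1/q)$ in regime~1 (which gives $nD \leq \ln(J+1)$ directly by definition of the regime); picking $c_2$ sufficiently small closes the argument.

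The main technical obstacle is regime~2, where the inversion of $\varepsilon\ln(\varepsilon/q) \asymp \ln(J+1)/n$ produces the double-logarithmic denominator $\ln(\ln(J+1)/(nq))$ in $\phi_{J,q}(n)$: the two-sided estimates of $D(q+\varepsilon \parallel q)$ must track the factor $\ln(\varepsilon/q)$ with matching constants across the substitution $\varepsilon = c\phi_{J,q}(n)$, and the transition $\ln(x/\ln x) \asymp \ln x$ must be controlled uniformly down to the boundary $x = e$. A secondary care-point is continuity at the regime boundaries $n \asymp \ln(J+1)/\ln(1/q)$ and $n \asymp \ln(J+1)/(eq)$, which I would verify by checking that the three expressions for $\phi_{J,q}(n)$ agree up to universal constants at those points, so that the case analysis glues together globally with a single pair of universal constants $C_1, c_2$.
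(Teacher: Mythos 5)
Your proposal follows essentially the same strategy as the paper: bracket the tail probability by Chernoff (Lemma~\ref{lemma:chernoff_bound}) and the anti-concentration bound (Lemma~\ref{lemma:anti_concentration}), reduce the definition of $\varepsilon_{J,q}(n)$ to inverting $nD(q+\varepsilon\parallel q) \asymp \ln J$, and split into the three regimes of $\phi_{J,q}(n)$ using the two-sided KL estimates (the paper's Lemma~\ref{lemma:estimates_kl}) together with the discreteness argument for the $\tfrac{1}{n}-q$ term and the $1-(1-q)^n$ criterion for the degenerate case. The paper organizes this as two separate propositions (\ref{prop:upper_bound_eps} for the upper bound, \ref{prop:lower_bound} for the lower bound, with the lower bound defining $x(n)$ as the exact solution of $D(q+x\parallel q)=\ln(3J/2)/(Cn)$ rather than arguing by contraposition as you do), but the substance — including your noted care-points about the $\ln(x/\ln x)\asymp\ln x$ transition and matching constants at the regime boundaries — is the same.
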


\paragraph{Sketch of proof.} Informally, the concentration and anti-concentration inequalities from Lemma \ref{lemma:chernoff_bound} and Lemma \ref{lemma:anti_concentration} show that in most cases one has
\begin{equation*}
    \ln\paren{\frac{c_0}{2J}} \approx \ln \Pbb\paren{\frac{Y}{n} \geq q + \varepsilon_{J,q}(n)} \asymp -nD(q + \varepsilon_{J,q}(n) \parallel q).
\end{equation*}
These estimates are not tight in the ``Poissonian'' regime when $q \lesssim \frac{1}{nJ}$ which has to be treated separately. Otherwise, $\varepsilon_{J,q}(n)$ is essentially a solution to $D(q+\varepsilon \parallel q) \asymp \frac{\ln (J+1)}{n}$ in $\varepsilon$. The KL divergence shows two major regimes: either $D(q+\varepsilon \parallel q)\asymp \varepsilon\ln\frac{\varepsilon}{q}$ or $D(q+\varepsilon \parallel q)\asymp \frac{\varepsilon^2}{q}$. As a remark, these two regimes for the KL divergence are equivalent to the two standard regimes for Bennett's inequality (see \citet[Subsection 2.7]{boucheron2013concentration} for a more detailed overview of this inequality, or Lemma~\ref{lemma:estimates_kl} for a precise statement). These two asymptotic behaviors translate into the second and third regimes in the definition of $\phi_{J,q}(n)$ respectively.
\subparagraph{Case 1:}  $\frac{\ln(J+1)}{n}\asymp  D(q+\varepsilon_{J,q}(n) \parallel q) \asymp \varepsilon_{J,q}(n)\ln\frac{\varepsilon_{J,q}(n)}{q}$. In this case we obtain $\varepsilon_{J,q}(n) \asymp \frac{\ln(J+1)}{n \ln\left(\frac{\ln(J+1)}{nq}\right)}$.

\subparagraph{Case 2:} $\frac{\ln(J+1)}{n} \asymp D(q+\varepsilon_{J,q}(n) \parallel q) \leq \frac{\varepsilon_{J,q}(n)^2}{q}.$ This corresponds to a sub-Gaussian regime and we obtain $\varepsilon_{J,q}(n) \asymp \sqrt\frac{q \ln(J+1)}{n}.$ \hfill $\blacksquare$
\vspace{2mm}

\noindent We are now ready to complete the proof of \cref{thm:main_result}. 

\paragraph{Sketch of proof of \cref{thm:main_result}}
When some entries of $p$ are sufficiently large so that one can use the estimates from Proposition \ref{prop:combined_bound_eps} we combine it with Proposition \ref{prop:reduction}. This shows that $\Delta_n^+(\p) \asymp \sup_{i\geq 1}\phi_{i,p(i)}(n)$ whenever there exists $j\geq 1$ with $p(j)\geq \frac{c_0}{nj}$.

We treat separately the remaining case when for all $j\geq 1$, one has $p(j)\leq \frac{c_0}{nj}$. This corresponds to a Poissonian regime and it suffices to characterize the probability that one of the coordinates $\hat p_n(i)$ for $i\geq 1$ is non-zero. In this case, we obtain $\Delta_n^+(\p) \asymp \frac{1}{n}\land \sum_{j\geq 1}p(j).$ Proving that the leading term in $\Delta_n(\p) \asymp \Delta_n^-(\p) + \Delta_n^+(\p)$ is indeed $\Delta_n^+(\p)$ ends the proof of our main characterization in \cref{thm:main_result}. \hfill $\blacksquare$

\subsection{Discussion and comparison with bounds from the literature.}\label{ssec:comparison}

We first give some intuition on the decay of $\Delta_n(\p)$ given in \cref{thm:main_result}. The first case when for all $j\geq 1$, one has $p(j)\leq 1/(2nj)$ corresponds to rare events scenarios such that with high probability, $\hat{p}(j)\leq 1/n$ for all $j\geq 1$. This is characterized by the term $1/n$ from the bound $\Delta_n(\p)\asymp 1/n\land \sum_{j\geq 1}p(j)$. The second term characterizes the probability of the event when $\sup_{j\geq 1}\hat p_n(j) \geq \frac{1}{n}$. In this low-probability regime, the probabilities of success can be summed: with probability $\asymp 1\land \sum_{j\geq 1}np(j)$, at least one of the binomials $n\hat p_n(j)$ is nonzero.

We next turn to the second case when there exists $j\geq 1$ for which $p(j)\geq 1/(2nj)$. In this case, it is useful to compare our bounds using the functionals $S(\p)$ and $T(\p)$ from the literature. In particular, as a direct consequence of the characterization, we can recover the lower bound
\begin{equation*}
    \Delta_n(\p)\gtrsim 1\land \frac{T(\p)}{n}
\end{equation*}
from \cite{cohen2022local}. By definition of $\phi_{j,p(j)}$, for $n\leq T(\p)=\sup_{j\geq 1}\frac{\ln(j+1)}{\ln1/p(j)}$, we have $\Delta_n(\p)\asymp \sup_{j\geq 1}\phi_{j,p(j)}(n) = 1$. When $n>T(\p)$, the functions $\phi_{j,p(j)}(n)$ fall in either of the last two regimes (see Eq~\eqref{eq:definition_phi}), hence
\begin{align*}
    \Delta_n(\p) &\asymp \sqrt{\frac{S(\p)}{n}} \lor \frac{1}{n}\sup_{j\geq 1}\frac{\ln(j+1)}{\ln\paren{2+\frac{\ln(j+1)}{np(j)}}}\\
    &\gtrsim \sqrt{\frac{S(\p)}{n}} \lor \frac{1}{n}\sup_{j\geq 1}\frac{\ln(j+1)}{\ln\frac{\ln 1/p(j)}{p(j)}} \asymp \sqrt{\frac{S(\p)}{n}} \lor \frac{T(\p)}{n}.
\end{align*}
Together with the previous case, this shows that
\begin{equation*}
    \Delta_n(\p) \gtrsim 1\land \paren{\sqrt{\frac{S(\p)}{n}} \lor \frac{T(\p)}{n}}.
\end{equation*}
As suggested by the derivation, this lower bound is tight for $n$ in the neighborhood of $T(\p)$. For instance consider a step-like parameter $\step{J,q}$ with $q\geq \frac{1}{2nJ}$. Fix a constant $0\leq a<1$, then for any $T(\p)=\frac{\ln(J+1)}{\ln 1/q}\leq n\leq \frac{\ln(J+1)}{q^a}$, \cref{thm:main_result} implies
\begin{equation*}
    \frac{T(\p)}{n} \lesssim  \phi_{J,q}(n)\asymp \Delta_n(\p) \lesssim \frac{\ln(J+1)}{n\ln 1/q^{1-a}} =\frac{1}{1-a}\frac{T(\p)}{n}.
\end{equation*}

In terms of upper bounds, we recover the bound Eq~\eqref{eq:bound_cohen}. To do so, we give an upper bound of the functions $\phi_{J,q}(n)$ for $q\geq \frac{1}{2nJ}$ in the second regime from Eq~\eqref{eq:definition_phi} for which $\frac{\ln(J+1)}{\ln 1/q}\leq n\leq \frac{\ln(J+1)}{eq}$. First, note that the previous equation shows that for $n\leq \frac{\ln(J+1)}{\sqrt q}$, one has $\Delta_n(\p)\lesssim \frac{T(\p)}{n}$. Therefore it remains to consider the case when $\frac{\ln(J+1)}{\sqrt q}\leq n\leq \frac{\ln(J+1)}{eq}$. Note that $\ln\frac{1}{q} \leq 2\ln \frac{n}{\ln(J+1)}\leq 2\ln n$. Hence, in that regime, 
\begin{equation*}
    \phi_{J,q}(n)  \leq \frac{\ln(J+1)}{n} \lesssim \frac{\ln(J+1)}{n\ln\frac{1}{q}} \ln n.
\end{equation*}
Together with \cref{thm:main_result}, this implies
\begin{equation*}
    \Delta_n(\p) \asymp \sup_{j\geq 1}\phi_{j,p(j)}(n) \asymp \sup_{j\geq 1, p(j)\geq \frac{1}{2nj}}\phi_{j,p(j)}(n) \lesssim 1\land \paren{\sqrt{\frac{S(\p)}{n}} + \frac{T(\p)}{n}\ln n}.
\end{equation*}
Here, we used the fact that terms $\phi_{j,p(j)}(n)$ for which $p(j)\leq \frac{1}{2nj}$ are not dominant. This is formally shown in \cref{sec:proof_characterization} in the proof of Proposition \ref{prop:large_p}. Note that the estimates are tight for $n$ in the neighborhood of the beginning of the sub-Gaussian regime when the term $\sqrt{\frac{S(\p)}{n}}$ dominates.

As a summary of this discussion, assuming that there exists $j\geq 1$ for which $p(j)\geq \frac{1}{2nj}$, the decay of $\Delta_n(\p)$ shows three main regimes:
\begin{itemize}
    \item $\Delta_n(\p)\asymp 1$ when $n\leq T(\p)$,
    \item a somewhat sub-exponential regime when the decay of $\Delta_n(\p)$ interpolates between $\frac{T(\p)}{n}$ towards the start, and $\frac{T(\p)\ln n}{n}$ towards the end of this regime,
    \item the asymptotic sub-Gaussian regime $\Delta_n(\p)\asymp \sqrt{\frac{S(\p)}{n}}$.
\end{itemize}

\section{Expected maximum deviation for arbitrarily correlated distributions}
\label{sec:correlated_case}

In this section, we prove our estimate on the expected maximum empirical deviation for correlated distributions $\mu$ on $\{0,1\}^\Nbb$ from Corollary \ref{cor:correlated_case}. The latter requires the localized version of the classical Dvoretzky-Kiefer-Wolfowitz theorem given in \cref{thm:local_dkw}. For our purposes, we only need the result on intervals $(-\infty,x_0]$.

\begin{corollary}\label{cor:local_dkw}
    Let $X_1,\ldots,X_n$ be i.i.d.\ samples and denote by $F$ (respectively $F_n$) the true CDF (respectively empirical CDF). Then, for any $x_0\in\Rbb$ and $t\geq 0$,
    \begin{equation*}
        \Pbb\paren{\sup_{x \leq x_0} |F_n(x) - F(x)| > t \sqrt{\frac{F(x_0)}{n}} } \leq c_1 e^{-c_2 \min (t^2,t\sqrt{n F(x_0)})},
    \end{equation*}
    for some universal constants $c_1,c_2>0$.
\end{corollary}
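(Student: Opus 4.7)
The plan is to derive the corollary as an immediate consequence of \cref{thm:local_dkw} applied to the interval $[-\infty, x_0]$. Set $V := \max_{x \leq x_0} F(x)(1-F(x))$ with the convention $F(-\infty)=0$. The crux of the reduction is the uniform bound $V \leq F(x_0)$: for any $x \leq x_0$, monotonicity of $F$ gives $F(x)(1-F(x)) \leq F(x) \leq F(x_0)$. (One can also verify this by cases: if $F(x_0) \leq 1/2$, the parabola $u\mapsto u(1-u)$ is increasing on $[0, F(x_0)]$ so $V = F(x_0)(1-F(x_0)) \leq F(x_0)$; if $F(x_0) > 1/2$, then $V = 1/4 \leq F(x_0)$.) It will also be convenient to dispose of the degenerate case $F(x_0)=0$ first: then $F(x)=0$ for all $x\leq x_0$, no sample can fall in $(-\infty, x_0]$ almost surely, so $F_n \equiv 0$ on this interval and the supremum is identically zero, making the bound trivial for every $t\geq 0$.

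In the remaining case $V > 0$, I would introduce the rescaled parameter $t' := t\sqrt{F(x_0)/V} \geq t$, chosen so that $t' \sqrt{V/n} = t\sqrt{F(x_0)/n}$ matches exactly the threshold appearing in the corollary. Applying \cref{thm:local_dkw} to $[-\infty, x_0]$ with this $t'$ yields
\begin{equation*}
\Pbb\paren{\sup_{x \leq x_0} |F_n(x) - F(x)| > t \sqrt{F(x_0)/n}} \leq c_1 e^{-c_2 \min((t')^2, t'\sqrt{nV})}.
\end{equation*}
It then suffices to compare exponents: since $V\leq F(x_0)$, one has $(t')^2 = t^2 F(x_0)/V \geq t^2$, while $t'\sqrt{nV} = t\sqrt{F(x_0)/V}\cdot \sqrt{nV} = t\sqrt{n F(x_0)}$. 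Hence $\min((t')^2, t'\sqrt{nV}) \geq \min(t^2, t\sqrt{nF(x_0)})$, and the tail bound inherits the same universal constants $c_1, c_2$ as in \cref{thm:local_dkw}.

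I do not anticipate a genuine obstacle: the only nontrivial ingredient is the inequality $V \leq F(x_0)$, which is a one-line monotonicity argument, and the rescaling is designed so that both the scaling factor of the confidence band and the two pieces of the mixed sub-Gaussian/sub-exponential exponent transform in the favorable direction simultaneously. Everything else is bookkeeping around the degenerate case and the direction of the inequality between $\min((t')^2, t'\sqrt{nV})$ and $\min(t^2, t\sqrt{n F(x_0)})$.
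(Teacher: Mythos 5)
Your rescaling argument is algebraically correct: the inequality $V := \max_{x\leq x_0} F(x)(1-F(x)) \leq F(x_0)$ holds, $t' := t\sqrt{F(x_0)/V}$ aligns the thresholds so that $t'\sqrt{V/n} = t\sqrt{F(x_0)/n}$, and the exponent comparison $(t')^2 \geq t^2$, $t'\sqrt{nV} = t\sqrt{nF(x_0)}$ goes in the favorable direction. The treatment of $F(x_0)=0$ is also fine.

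The problem is that the argument is circular in this paper. Despite the naming, the logical order of proof in \cref{sec:proof_local_dkw} is: Lemma~\ref{lemma:uniform_interval} first, then Corollary~\ref{cor:local_dkw}, and only then Theorem~\ref{thm:local_dkw}. The paper's proof of Theorem~\ref{thm:local_dkw} explicitly invokes Corollary~\ref{cor:local_dkw} as its main ingredient: after reducing to $F(x_1)\leq 1/2$, it uses $V\geq F(x_1)/2$ and applies the corollary on the half-line $(-\infty, x_1]$. So deducing Corollary~\ref{cor:local_dkw} from Theorem~\ref{thm:local_dkw} presupposes the very statement you are trying to prove. The paper instead proves the corollary directly: reduce to the uniform distribution via the quantile transform $x\mapsto F^{-1}(x)$, dispatch $F(x_0)\geq 1/2$ with classical DKW (Theorem~\ref{thm:dkw}), and for $q := F(x_0) < 1/2$ dyadically decompose $[0,q]$ into pieces $[q/2^{u+1}, q/2^u]$ (plus a base interval $[0,\delta]$ with $\delta = t\sqrt{q/n}$), apply the Bernstein-chaining bound of Lemma~\ref{lemma:uniform_interval} on each piece, and sum the tails. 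Your one-line reduction would be fine if the theorem were established by a route that does not pass through the corollary, but that is not how the paper's dependency chain runs, and you do not supply an independent proof of the theorem.
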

The proof of both \cref{thm:local_dkw} and Corollary~\ref{cor:local_dkw} are given in \cref{sec:proof_local_dkw}.

We start by proving Corollary \ref{cor:correlated_case} that gives estimates of $\Delta_n(\mu)$ for general distributions $\mu$ on $\{0,1\}^\Nbb$.

\vspace{2mm}

\begin{proof}{\textbf{of Corollary \ref{cor:correlated_case}}}
All the upper bounds derived in the proof of \cref{thm:main_result} either used the union bound or Markov's inequality. We point in particular to \cref{prop:reduction} which gives the main upper bound whenever there exists $i\geq 0$ for which $\varepsilon_{i,p(i)}(n)\geq 0$. As a result, these still hold in the case of general distributions $\mu$ on $\{0,1\}^d$ with mean $p=\Ebb_{X\sim \mu}[X]$. We also provide some simple lower bounds which correspond from only considering the deviation from the first coordinate.
\begin{equation*}
    \Delta_n(\mu) = \Ebb \|\hat \p_n-\p\|_\infty \geq \Ebb |\hat p_n(1)-p(1)| \asymp \frac{np(1)\land \sqrt{np(1)}}{n}=p(1)\land\sqrt{\frac{p(1)}{n}}
\end{equation*}
The last estimate is classical and can be found for instance in \cite{berend2013sharp}.
\end{proof}

While the bounds provided in Corollary \ref{cor:correlated_case} may not be tight, the following result shows that if one only has access to the mean statistic $p=\Ebb_{X\sim\mu}[X]$, these are tight.

\begin{proposition}
    Let $p\in[0,\frac{1}{2}]^\Nbb_{\downarrow 0}$. There exists a distribution $\mu$ on $\{0,1\}^\Nbb$ with $\Ebb_{X\sim\mu}[X]=p$ such that for all $n\geq 1$,
    \begin{equation*}
        \Delta_n(\mu)\asymp p(1) \land \sqrt{\frac{p(1)}{n}}.
    \end{equation*}
\end{proposition}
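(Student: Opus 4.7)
The plan is to exhibit an explicit ``nested'' coupling. Let $U \sim \mathrm{Unif}[0,1]$ and set $X(j) := \1(U \leq p(j))$ for $j \geq 1$. Then $X \in \{0,1\}^\Nbb$, $\Ebb[X(j)] = p(j)$, and for i.i.d.\ copies $X_1, \ldots, X_n$ driven by i.i.d.\ uniforms $U_1, \ldots, U_n$, the empirical mean is $\hat p_n(j) = F_n(p(j))$ where $F_n$ denotes the empirical CDF of $U_1,\ldots,U_n$ and $F(x)=x$ the true CDF. Since $p$ is decreasing and nonnegative, all $p(j) \in [0, p(1)]$, so
\begin{equation*}
\|\hat p_n - p\|_\infty = \sup_{j \geq 1}|F_n(p(j)) - p(j)| \leq \sup_{x \in [0, p(1)]}|F_n(x) - F(x)|.
\end{equation*}
The lower bound is then immediate from the single-coordinate deviation: $\Delta_n(\mu) \geq \Ebb|\hat p_n(1)-p(1)|$, and since $n\hat p_n(1) \sim \Bcal(n, p(1))$, the classical sharp estimates of the mean absolute deviation of a binomial \citep{berend2013sharp} yield $\Ebb|\hat p_n(1)-p(1)| \asymp p(1) \land \sqrt{p(1)/n}$, matching the target lower bound.

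For the matching upper bound I would combine two different estimates of the common quantity $\Ebb \sup_{x\in[0,p(1)]}|F_n(x)-x|$. First, a trivial bound: since both $F_n$ and $F$ are nondecreasing and vanish at $0$, $\sup_{x \in [0,p(1)]}|F_n(x) - x| \leq F_n(p(1)) + p(1)$, and taking expectation gives $\Delta_n(\mu) \leq 2 p(1)$. Second, the localized DKW estimate of Corollary~\ref{cor:local_dkw} applied at $x_0 = p(1)$ (so that $V = F(p(1)) = p(1)$) yields, after integrating the tail (sub-Gaussian for $t \leq \sqrt{np(1)}$, sub-exponential beyond), the bound $\Ebb \sup_{x \leq p(1)}|F_n(x)-x| \lesssim \sqrt{p(1)/n} + 1/n$.

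Combining the two estimates gives $\Delta_n(\mu) \lesssim p(1) \land \bigl(\sqrt{p(1)/n} + 1/n\bigr)$, and splitting into the regimes $p(1) \leq 1/n$ and $p(1) \geq 1/n$ shows this right-hand side is $\asymp p(1) \land \sqrt{p(1)/n}$: in the first regime the trivial bound $p(1)$ dominates (and matches, since $p(1) \leq \sqrt{p(1)/n}$), while in the second regime the $1/n$ term is absorbed by $\sqrt{p(1)/n}$ and the DKW bound is tight.

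The main subtlety, in my view, is precisely this two-regime behavior on the upper bound side. The localized DKW bound alone is off by a $(np(1))^{-1/2}$ factor in the regime $p(1) \lesssim 1/n$ because the sub-exponential tail kicks in before the Gaussian bulk is resolved; thus the integrated tail contributes the stray $1/n$ that the target $p(1)\land\sqrt{p(1)/n}$ does not accommodate. The ``coupled-all-to-one-uniform'' construction is tailored precisely so that the complementary trivial bound $\Ebb F_n(p(1)) + p(1) = 2p(1)$ covers this small-probability regime for free, which is what makes the two bounds interlock sharply across all values of $n$ and $p(1)$ simultaneously.
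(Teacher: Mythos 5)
Your proposal is correct and follows essentially the same route as the paper: the same nested coupling $X(j)=\1(U\leq p(j))$, the same lower bound via the single-coordinate binomial deviation, and the same upper bound combining a trivial $\lesssim p(1)$ estimate with the localized DKW bound of Corollary~\ref{cor:local_dkw}. The only (minor, and arguably cleaner) differences are bookkeeping: you derive $\Delta_n(\mu)\leq 2p(1)$ directly from $\sup_{x\leq p(1)}|F_n(x)-x|\leq F_n(p(1))+p(1)$ and $\Ebb F_n(p(1))=p(1)$, whereas the paper splits on $\hat p_n(1)\gtrless 2p(1)$; and you keep the stray $1/n$ term from the sub-exponential tail and absorb it case-by-case at the end, whereas the paper first reduces to $p(1)\geq 1/n$ before invoking DKW so that $\min(t^2,t\sqrt{np(1)})\geq c\,t$ and the $1/n$ never appears. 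Both are sound and arrive at the same $p(1)\land\sqrt{p(1)/n}$; there is no meaningful gap.
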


\begin{proof}
    We start by constructing the corresponding distribution for $X\sim\mu$. We use a standard coupling which allows having samples $X$ with non-increasing coordinates. Precisely, let $U\sim\Ucal([0,1])$ be a uniform random variable. We let $X(j) = \1[U\leq p(j)]$. We now show that this distribution satisfies $\Delta_n(\mu)\asymp p(1)\land \sqrt{\frac{p(1)}{n}}$. With $n$ i.i.d.\ samples $X_i\sim\mu$, we define $Y=\sum_{i=1}^n X_i$ and $\hat p_n=\frac{Y}{n}$. Because of discretization issues, we distinguish several cases.

    We start by proving the upper bound $\Delta_n(\mu)\lesssim p(1)$. Suppose that $\hat p_n(1)\geq 2p(1)$. Then, we have for any $j\geq 2$, $|\hat p_n(j) -p(j)| \leq p(1) + |\hat p_n(1)-p(1)| \leq 2|\hat p_n(1)-p(1)| $. Hence, 
    \begin{equation*}
        \Delta_n(\p)\leq 2p(1) + 2\Ebb|\hat p_n(1)-p(1)|.
    \end{equation*}
    Because $\Ebb|\hat p_n(1)-p(1)|\asymp p(1)\land \sqrt{\frac{p(1)}{n}}$, we obtain the desired bound $\Delta_n(\mu)\lesssim p(1)$.

    We now turn to the upper bound $\Delta_n(\mu)\lesssim \sqrt{\frac{p(1)}{n}}$. Without loss of generality, we can therefore suppose that $p(1) \geq \frac{1}{n}$. Note that if $F_n(\cdot)$ is the empirical cumulative distribution function obtained from the i.i.d.\ uniform samples $U_1,\ldots,U_n$ used to define the variables $X_1,\ldots,X_n$,
    \begin{equation*}
        \Delta_n(\mu) = \Ebb \sup_{j\geq 1} |F_n(p(j))-p(j)| \leq \Ebb \sup_{u\leq p(1)} |F_n(u)-u|.
    \end{equation*}
    We then obtain bounds on the right-hand side using the localized version of the DKW theorem from Corollary \ref{cor:local_dkw}. Recalling that $p(1)\geq \frac{1}{n}$, this result implies that for $t>0$,
    \begin{equation*}
        \Pbb\paren{\sup_{u\in[0,1]}|F_n(u)-u| >t\sqrt{\frac{p(1)}{n}}} \leq c_1 e^{-c_2 t}
    \end{equation*}
    for universal constants $c_1,c_2>0$. As a result, we obtain
    \begin{equation*}
         \Delta_n(\mu) \leq \Ebb \sup_{u\in[0,p(1)]}|F_n(u)-u| \lesssim \sqrt{\frac{p(1)}{n}}.
    \end{equation*}
    This ends the proof of the proposition.
\end{proof}

\section{On the open problem from \citet{cohen2022local}}
\label{sec:open_problem}

As a consequence of our characterization, we can answer the COLT open problem posed by \cite{cohen2023open}, showing that the $\ln n$ factor the bound Eq~\eqref{eq:bound_cohen} is necessary.

\thmopenquestion*

\begin{proof}
    Consider the sequence 
    \[
    \begin{cases}
        p(j) = \frac{1}{K\sqrt n}:=p, \quad \text{if } \ln(j+1) \leq K\sqrt n\\
        p(j) = 0, \quad\text{otherwise}
    \end{cases}
    \]
    for a parameter $2\leq K\leq \sqrt n$ to define later. We denote by $J$ be the largest integer such that $p(J)=p$, in particular, we still have $\ln(J+1) \leq K\sqrt n$. Then,
    \begin{equation*}
        p(J) =\frac{1}{K\sqrt n}\geq \frac{\ln(J+1)}{K^2n} \geq \frac{1}{2K^2n}.
    \end{equation*}
    Now note that $\ln(J+2)>K\sqrt n\geq \sqrt n$, hence $K\leq \sqrt{J}$ for any $n\geq n_1$ for some constant $n_1\geq 1$. Together with the previous equation, we have $p(J)\geq \frac{1}{2Jn}$ so from \cref{thm:main_result} it follows
    \begin{align*}
    \Delta_n(\p) &\asymp 1\land \sup_{j \geq 1} \left(\frac{\ln(j+1)}{n\ln\left(2+\frac{\ln(j+1)}{np}\right)}  + \sqrt{\frac{p(j)\ln(j+1)}{n}} \right)\\
    &\asymp  \frac{K}{\sqrt n\ln K}  + \frac{1}{\sqrt n}.
    \end{align*}
    Hence, for $K\gtrsim C^2$ which can be achieved for $n$ sufficiently large
    (depending on $C$),we obtain $\frac{1}{2}\Delta_n(\p) \geq C\sqrt{\frac{S(\p)}{n}}$. On the other hand, note that
    \begin{equation*}
        T(\p) = \frac{\ln(J+1)}{\ln\frac{1}{p}} \leq \frac{2K\sqrt n}{\ln n}.
    \end{equation*}
    Therefore, we obtain $\psi(n)\gtrsim \frac{\ln n}{\ln K} \asymp \frac{\ln n}{\ln C}$.
\end{proof}

Although the previous proof used the general characterization of $\Delta_n(\p)$, \cref{thm:open_problem} can be proved with elementary arguments. We provide below a simple proof to obtain the same lower bound on $\Delta_n(\p)$.\\

\begin{proof}{\textbf{(elementary) of \cref{thm:open_problem}}}
    We use the same example for which $p(j)=\frac{1}{K\sqrt n}:=p$ if $\ln(j+1)\leq K\sqrt n$, and $p(j)=0$ otherwise. The parameter $4\leq K\leq \sqrt n$ will be fixed later. We denote by $J$ the largest integer such that $p(J)=p$. Let $l=\floor{\frac{K\sqrt n}{4\ln K}}$. For any $j\leq J$,
    \begin{align*}
        \Pbb\paren{\hat p_n(j) \geq \frac{l}{n}} &\geq \binom{n}{l}p^{l}(1-p)^{n-l}\\
        &\geq \paren{\frac{n}{l}}^l p^l (1-p)^n\\
        &\geq \exp\paren{-l\ln\frac{l}{np} -2np}\\
        &\geq \exp\paren{-\frac{K\sqrt n}{4\ln K}\ln\frac{K^2}{4\ln K} -\frac{2\sqrt n}{K}}\\
        &\geq \exp\paren{-\frac{1}{2}K\sqrt n -\frac{2\sqrt n}{K}}\geq e^{-\frac{3}{4}K\sqrt n}.
    \end{align*}
    In the third inequality, we used the fact that $\ln(1-x)\geq -2x$ for $x\in[0,\frac{1}{2}]$. Now note that $\ln(J+2)>K\sqrt n$. Therefore, 
\begin{align*}
    \Pbb\sqb{\max_{j\in[J]} \hat p_n(j) \geq \frac{l}{n}} &\geq 1-\paren{1-\Pbb\sqb{\hat p_n(1) \geq \frac{l}{n}}}^J\\
    &\geq 1-\exp\paren{-J\cdot \Pbb\sqb{\hat p_n(1) \geq \frac{l}{n}}}\\
    &\geq 1-\exp\paren{-(e^{K\sqrt n}-2)e^{-\frac{3}{4}K\sqrt n}} \geq 1-e^{2/e^2}\geq \frac{1}{2}.
\end{align*}
Hence, for $n$ sufficiently large
\begin{equation*}
    \Delta_n(\p) \geq \frac{1}{2}\paren{\frac{l}{n}-p} \geq \frac{K}{10\sqrt n \ln K}.
\end{equation*}
This provides the same lower bound for $\Delta_n(\p)$ as in the previous proof up to constants, and the proof is identical from that point.
\end{proof}

\section{Conclusion and future work}
\label{sec:conclusion}

In this paper, we have derived the exact characterization (up to a constant factor) of the infinite-norm deviation of the empirical mean of the distribution supported on $\{0,1\}^d$ from the true mean in the case of product distributions. For the case of general (non-product) distributions, we have derived a lower and upper bound on the deviation when we only have access to the mean statistics, and provided distributions corresponding to these bounds. Along the way, we proved a localized version of Dvoretzky–Kiefer–Wolfowitz inequality. We extended the results to the cases where the deviation is measured in a general $q-$norm and provided characterization of the convergence, and both finite and assymptotic bounds on the convergence. Additionally, we considered the case where the random variables were supported on $[0,1]^d$ instad of $\{0,1\}^d$ and we have derived a lower and upper bound on the deviation when we only have access to the mean (or variance) statistics, and provided distributions corresponding to these bounds.

An interesting direction for future work would be to consider the case of dependent coordinates with information about how they are dependent; e.g., when we know the covariance matrix. Exact non-asymptotic bounds would be of particular interest, but to the best of our knowledge even characterizing the asymptotic behavior of the maximum deviation for arbitrary distributions is an open question.

\acks{
The authors would like to mention that concerning the COLT 2023 open problem \cite{cohen2023open} of removing the factor $\ln n$ in Eq~\eqref{eq:bound_cohen}, three independent examples were first (almost) simultaneously given by Jaouad Mourtada and both authors of this paper, showing that a factor $\ln n/\ln\ln n$ is at least necessary in general. These examples were then improved in \cref{thm:open_problem} of this paper to show that the full $\ln n$ factor is necessary. The authors would like to express their sincere gratitude to Aryeh Kontorovich for carefully updating a timeline of that progress at \url{https://mathoverflow.net/questions/447472} and for invaluable discussions on Local Glivenko-Cantelli bounds; and to Jaouad Mourtada for useful discussions and advice on the exposition of these examples. VV was supported by the DFG Cluster of Excellence ``Machine Learning – New Perspectives for Science”, EXC 2064/1, project number 390727645 and is thankful for the support of Open Philanthropy.}

\bibliography{refs}

\begin{thebibliography}{28}
\providecommand{\natexlab}[1]{#1}
\providecommand{\url}[1]{\texttt{#1}}
\expandafter\ifx\csname urlstyle\endcsname\relax
  \providecommand{\doi}[1]{doi: #1}\else
  \providecommand{\doi}{doi: \begingroup \urlstyle{rm}\Url}\fi

\bibitem[Balakrishnan and Wasserman(2019)]{balakrishnan2019hypothesis}
Sivaraman Balakrishnan and Larry Wasserman.
\newblock Hypothesis testing for densities and high-dimensional multinomials.
\newblock \emph{The Annals of Statistics}, 47\penalty0 (4):\penalty0
  1893--1927, 2019.

\bibitem[Bartl and Mendelson(2023)]{bartl2023variance}
Daniel Bartl and Shahar Mendelson.
\newblock On a variance dependent {D}voretzky-{K}iefer-{W}olfowitz inequality,
  2023.

\bibitem[Berend and Kontorovich(2013{\natexlab{a}})]{berend2013concentration}
Daniel Berend and Aryeh Kontorovich.
\newblock On the concentration of the missing mass.
\newblock \emph{Electronic Communications in Probability}, 18:\penalty0 1--7,
  2013{\natexlab{a}}.

\bibitem[Berend and Kontorovich(2013{\natexlab{b}})]{berend2013sharp}
Daniel Berend and Aryeh Kontorovich.
\newblock A sharp estimate of the binomial mean absolute deviation with
  applications.
\newblock \emph{Statistics \& Probability Letters}, 83\penalty0 (4):\penalty0
  1254--1259, 2013{\natexlab{b}}.

\bibitem[Boucheron et~al.(2013)Boucheron, Lugosi, and
  Massart]{boucheron2013concentration}
S~Boucheron, G~Lugosi, and P~Massart.
\newblock Concentration inequalities: A nonasymptotic theory of independence.
  univ. press, 2013.

\bibitem[Buldygin and Moskvichova(2013)]{buldygin2013sub}
V~Buldygin and K~Moskvichova.
\newblock The sub-gaussian norm of a binary random variable.
\newblock \emph{Theory of probability and mathematical statistics},
  86:\penalty0 33--49, 2013.

\bibitem[Catoni(2012)]{catoni2012challenging}
Olivier Catoni.
\newblock Challenging the empirical mean and empirical variance: a deviation
  study.
\newblock In \emph{Annales de l'IHP Probabilit{\'e}s et statistiques},
  volume~48, pages 1148--1185, 2012.

\bibitem[Cherapanamjeri et~al.(2019)Cherapanamjeri, Flammarion, and
  Bartlett]{cherapanamjeri2019fast}
Yeshwanth Cherapanamjeri, Nicolas Flammarion, and Peter~L Bartlett.
\newblock Fast mean estimation with sub-gaussian rates.
\newblock In \emph{Conference on Learning Theory}, pages 786--806. PMLR, 2019.

\bibitem[Cherapanamjeri et~al.(2022)Cherapanamjeri, Tripuraneni, Bartlett, and
  Jordan]{cherapanamjeri2022optimal}
Yeshwanth Cherapanamjeri, Nilesh Tripuraneni, Peter Bartlett, and Michael
  Jordan.
\newblock Optimal mean estimation without a variance.
\newblock In \emph{Conference on Learning Theory}, pages 356--357. PMLR, 2022.

\bibitem[Chhor and Carpentier(2020)]{chhor2020sharp}
Julien Chhor and Alexandra Carpentier.
\newblock Sharp local minimax rates for goodness-of-fit testing in multivariate
  binomial and poisson families and in multinomials.
\newblock \emph{arXiv preprint arXiv:2012.13766}, 2020.

\bibitem[Chhor and Carpentier(2021)]{chhor2021goodness}
Julien Chhor and Alexandra Carpentier.
\newblock Goodness-of-fit testing for h$\backslash$" older-continuous
  densities: Sharp local minimax rates.
\newblock \emph{arXiv preprint arXiv:2109.04346}, 2021.

\bibitem[Chhor et~al.(2022)Chhor, Mukherjee, and Sen]{chhor2022sparse}
Julien Chhor, Rajarshi Mukherjee, and Subhabrata Sen.
\newblock Sparse signal detection in heteroscedastic gaussian sequence models:
  Sharp minimax rates.
\newblock \emph{arXiv preprint arXiv:2211.08580}, 2022.

\bibitem[Cohen and Kontorovich(2022)]{cohen2022local}
Doron Cohen and Aryeh Kontorovich.
\newblock Local {G}livenko-{C}antelli.
\newblock \emph{arXiv preprint arXiv:2209.04054}, 2022.

\bibitem[Cohen and Kontorovich(2023)]{cohen2023open}
Doron Cohen and Aryeh Kontorovich.
\newblock Open problem: log(n) factor in "{L}ocal {G}livenko-{C}antelli.
\newblock \emph{COLT}, 2023.

\bibitem[Devroye et~al.(2016)Devroye, Lerasle, Lugosi, and
  Oliveira]{devroye2016sub}
Luc Devroye, Matthieu Lerasle, Gabor Lugosi, and Roberto~I Oliveira.
\newblock Sub-gaussian mean estimators.
\newblock 2016.

\bibitem[Diakonikolas et~al.(2020)Diakonikolas, Kane, and
  Pensia]{diakonikolas2020outlier}
Ilias Diakonikolas, Daniel~M Kane, and Ankit Pensia.
\newblock Outlier robust mean estimation with subgaussian rates via stability.
\newblock \emph{Advances in Neural Information Processing Systems},
  33:\penalty0 1830--1840, 2020.

\bibitem[Hopkins(2020)]{hopkins2020mean}
Samuel~B Hopkins.
\newblock Mean estimation with sub-gaussian rates in polynomial time.
\newblock 2020.

\bibitem[Kallenberg(1997)]{kallenberg1997foundations}
Olav Kallenberg.
\newblock \emph{Foundations of modern probability}, volume~2.
\newblock Springer, 1997.

\bibitem[Kearns and Saul(2013)]{kearns2013large}
Michael Kearns and Lawrence Saul.
\newblock Large deviation methods for approximate probabilistic inference.
\newblock \emph{arXiv preprint arXiv:1301.7392}, 2013.

\bibitem[Lee and Valiant(2022)]{lee2022optimal}
Jasper~C.H. Lee and Paul Valiant.
\newblock Optimal sub-gaussian mean estimation in $\mathbb{R}$.
\newblock In \emph{2021 IEEE 62nd Annual Symposium on Foundations of Computer
  Science (FOCS)}, pages 672--683, 2022.
\newblock \doi{10.1109/FOCS52979.2021.00071}.

\bibitem[Lugosi and Mendelson(2019{\natexlab{a}})]{lugosi2019mean}
G{\'a}bor Lugosi and Shahar Mendelson.
\newblock Mean estimation and regression under heavy-tailed distributions: A
  survey.
\newblock \emph{Foundations of Computational Mathematics}, 19\penalty0
  (5):\penalty0 1145--1190, 2019{\natexlab{a}}.

\bibitem[Lugosi and Mendelson(2019{\natexlab{b}})]{lugosi2019sub}
G{\'a}bor Lugosi and Shahar Mendelson.
\newblock Sub-gaussian estimators of the mean of a random vector.
\newblock 2019{\natexlab{b}}.

\bibitem[Lugosi and Mendelson(2021)]{lugosi2021robust}
Gabor Lugosi and Shahar Mendelson.
\newblock Robust multivariate mean estimation: the optimality of trimmed mean.
\newblock 2021.

\bibitem[Maillard(2021)]{maillard2021local}
Odalric-Ambrym Maillard.
\newblock Local {D}voretzky--{K}iefer--{W}olfowitz confidence bands.
\newblock \emph{Mathematical Methods of Statistics}, 30\penalty0
  (1-2):\penalty0 16--46, 2021.

\bibitem[Massart(1990)]{massart1990tight}
Pascal Massart.
\newblock The tight constant in the {D}voretzky-{K}iefer-{W}olfowitz
  inequality.
\newblock \emph{The annals of Probability}, pages 1269--1283, 1990.

\bibitem[Thomas(2018)]{thomas2018uniform}
Thomas.
\newblock Is uniform convergence faster for low-entropy distributions? in
  theoretical computer science stack exchange.
\newblock 2018.
\newblock URL \url{https://cstheory.stackexchange.com/questions/42009}.

\bibitem[Valiant and Valiant(2017)]{valiant2017automatic}
Gregory Valiant and Paul Valiant.
\newblock An automatic inequality prover and instance optimal identity testing.
\newblock \emph{SIAM Journal on Computing}, 46\penalty0 (1):\penalty0 429--455,
  2017.

\bibitem[Zhang and Zhou(2020)]{zhang2020non}
Anru~R Zhang and Yuchen Zhou.
\newblock On the non-asymptotic and sharp lower tail bounds of random
  variables.
\newblock \emph{Stat}, 9\penalty0 (1):\penalty0 e314, 2020.

\end{thebibliography}

\appendix

% \crefalias{section}{appendix} % uncomment if you are using cleveref

\section{Proofs of Section \ref{sec:characterization}}
\label{sec:proof_characterization}

We start by giving a proof of Proposition \ref{prop:reduction} that allows for reducing the problem of characterizing the expected maximum empirical mean deviation for general probability vectors $p\in[0,\frac{1}{2}]^\Nbb_{\downarrow 0}$ to step-like vectors $\step{J,q}$ constant equal to $q$ until coordinate $J$ then zero afterwards.

\vspace{2mm}

\begin{proof}{\textbf{of Proposition \ref{prop:reduction}}}
    We start with the lower bound. 
    Here, we will mainly show that  Bernoulli random variables with higher mean (which is smaller than $0.5$) cannot have much lighter tails compared to the lower mean ones; thus, when we would have that $\Delta_n(\p) \geq \Delta_n(\step{i, p(i)})$ as $\p \geq \step{i, p(i)}$ coordinate-wise.

    Fix $i\geq 1$ such that $\varepsilon_i = \varepsilon_{i,p(i)}(n) \geq \frac{1}{2}\sup_{j\geq 1} \varepsilon_{j,p(j)}(n)$. Let $j\leq i$. By construction, one has $p(j)\geq p(i)$. If $p(j) \leq \frac{\varepsilon_i}{2}$, one has
    \begin{equation}\label{eq:eq1}
        \Pbb\paren{\hat p_n(j) \geq p(j)+\frac{\varepsilon_i}{2}} \geq \Pbb(\hat p_n(j) \geq \varepsilon_i) \geq \Pbb(\hat p_n(i) \geq p(i) + \varepsilon_i) > \frac{c_0}{2i}.
    \end{equation}
    We now suppose that $p(j)>\frac{\varepsilon_i}{2}$. Next, because $p(i) + \varepsilon_i \in \frac{1}{n}\Zbb$ and $\varepsilon_i>0$, we obtain $p(j)+\varepsilon_i\geq p(i)+\varepsilon_i\geq\frac{1}{n}$. 
    %\vv{Then either $p(j)+\varepsilon_i \leq \frac{1+p(j)}{2}$ and we set $\tilde \varepsilon_i = \varepsilon_i$; otherwise, it holds that $p(j)+\varepsilon_i/2 \geq \frac{1}{n}$ and we set $\tilde \varepsilon_i = \varepsilon_i/2$.
    %We can now use Lemma \ref{lemma:anti_concentration}, which gives
    %\begin{equation*}
    %    \Pbb(\hat p_n(j) \geq p(j)+\tilde \varepsilon_i) \geq c_0e^{-Cn D(p(j)+\tilde\varepsilon_i\parallel p(j))}.
    %\end{equation*}
    %}
    
    We first treat the case when $p(j)>\frac{1}{n}$. First, the Chernoff bound from Lemma \ref{lemma:chernoff_bound} shows that
    \begin{equation*}
        \frac{c_0}{2i} < \Pbb(\hat p_n(i) \geq p(i) + \varepsilon_i) \leq e^{-nD(p(i) + \varepsilon_i\parallel p(i))}.
    \end{equation*}
    Hence, $D(p(i) + \varepsilon_i\parallel p(i)) \leq \frac{1}{n}\ln\frac{2i}{c_0}$. Now note that the function $p\in[0,\frac{1-\varepsilon_i}{2}]\mapsto D(p+\varepsilon_i\parallel p)$ is non-increasing (and convex), so that if $p(j) \leq \frac{1-\varepsilon_i}{2}$, one has
    \begin{equation*}
        D(p(j) + \varepsilon_i\parallel p(j)) \leq D(p(i) + \varepsilon_i\parallel p(i)).
    \end{equation*}
    On the other hand, if $p(j)>\frac{1-\varepsilon_i}{2}$, then
    \begin{equation*}
        D\paren{p(j) + \frac{\varepsilon_i}{2}\parallel p(j)} \leq D\paren{\frac{1+\varepsilon_i}{2}\parallel p(j)} \leq D\paren{\frac{1+\varepsilon_i}{2}\parallel \frac{1-\varepsilon_i}{2}} \leq D(p(i) + \varepsilon_i\parallel p(i)),
    \end{equation*}
    where in the last inequality, we also used the fact that $p\in[0,\frac{1-\varepsilon_i}{2}]\mapsto D(p+\varepsilon_i\parallel p)$ is non-increasing. In both cases, using the convexity of the KL-divergence in the first argument, we obtain
    \begin{equation*}
        D\paren{p(j) + \frac{\varepsilon_i}{2C}\parallel p(j)}  \leq \frac{1}{C} D\paren{p(j) + \frac{\varepsilon_i}{2}\parallel p(j)}\leq \frac{1}{nC}\ln\frac{2i}{c_0}.
    \end{equation*}
    Now because $p(j)\geq \frac{1}{n}$, we can use Lemma \ref{lemma:anti_concentration} (without loss of generality we can suppose $C\geq 2$ so that $p(j) + \frac{\varepsilon_i}{2C} \leq p(j)+\frac{1}{4} \leq \frac{1+p(j)}{2}$ which ensures that we can apply Lemma \ref{lemma:anti_concentration}) which gives
    \begin{equation}\label{eq:eq2}
        \Pbb\paren{\hat p_n(j) \geq p(j) + \frac{\varepsilon_i}{2C}} \geq \frac{c_0^2}{2i}.
    \end{equation}

    It remains to consider the case when $p(j)\leq \frac{1}{n}$. Recall that we have $p(j)+\varepsilon_i\geq \frac{1}{n}$ and $p(j)\geq \frac{\varepsilon_i}{2}$. As a result, $\frac{1}{3n}\leq p(j)\leq \frac{1}{n}$, and $\varepsilon_i\leq \frac{2}{n}$. Now observe that for $n\geq 2$,
    \begin{align*}
        \Pbb\paren{\hat p_n(j)\geq p(j) + \frac{1}{n}} &\geq \Pbb\paren{\hat p_n(j)\geq \frac{2}{n}} \\
        &\geq \binom{n}{2}p(j)^2 (1-p(j))^{n-2}\\
        & \geq \frac{n^2}{4} \frac{1}{(3n)^2}\paren{1-\frac{1}{n}}^n \geq \frac{1}{144}.
    \end{align*}
    In particular, this shows that $\Delta_n^+(\p) \geq \frac{1}{144n} \geq \frac{\varepsilon_i}{288}$. This shows that the lower bound is directly achieved whenever there exists such an index $j$. Otherwise, the previous cases in Eq~\eqref{eq:eq1} and Eq~\eqref{eq:eq2} showed that for all $j\leq i$,
    \begin{equation*}
        \Pbb\paren{\hat p_n(j) \geq p(j) + \frac{\varepsilon_i}{2C}} \geq \frac{c_0^2}{2i}.
    \end{equation*}
    Then,
    \begin{align*}
        \Pbb\paren{\sup_j [\hat p_n(j) - p(j)] \geq \frac{\varepsilon_i}{2C}} \geq 1-\paren{1-\frac{c_0^2}{2i}}^i \geq 1-e^{-c_0^2/2}>0.
    \end{align*}
    In particular, this shows that
    \begin{equation*}
        \Delta_n^+(\p) \geq \frac{1-e^{-c_0^2/2}}{2C}\varepsilon_i.
    \end{equation*}
    This gives the desired lower bound $\Delta_n^+(\p) \geq c\cdot \sup_{i\geq 1}\varepsilon_{i,p(i)}(n),$ for some universal constant $c>0$.

    We now turn to the upper bound. 
    Here, we show that the probability that the deviation at position $i$ exceeds $C\varepsilon$ by a factor of $k$ is at most $(2i)^{-k}$. Thus, decaying very quickly in both $k$ and $i$. We union bound this probability over the coordinates and sum up the tails (over $k$) to show $\Delta_n^+(\p) \lesssim \varepsilon$.
    
    For convenience, define $\varepsilon = \sup_{j\geq 1}\varepsilon_{j,p(j)}(n)$, and let $\tilde \varepsilon = (\varepsilon\land \frac{1}{4})\lor\frac{1}{n} \geq \frac{\varepsilon}{4}$. As a result, for any $p\in(0,\frac{1}{2}]$, one has $p+\tilde\varepsilon \leq \frac{1+p}{2}$. Fix $i\geq 1$. We can then apply Lemma \ref{lemma:anti_concentration} since $\tilde\varepsilon\geq \frac{1}{n}$, and use the continuity of the KL-divergence to obtain
    \begin{equation*}
        \Pbb(\hat p_n(i) - p(i) > \tilde \varepsilon) \geq c_0 e^{-Cn D(p(i) + \tilde \varepsilon \parallel p(i))}.
    \end{equation*}
    On the other hand,
    \begin{equation*}
        \Pbb(\hat p_n(i) - p(i) > \tilde \varepsilon) \leq \Pbb(\hat p_n(i) - p(i) > \varepsilon_i) \leq \frac{c_0}{2i}.
    \end{equation*}
    Combining the two equations gives
    \begin{equation*}
        D(p(i) + \tilde \varepsilon \parallel p(i)) \geq \frac{\ln (2i)}{nC}.
    \end{equation*}
    Because the KL-divergence is convex in the first argument, for any $k\geq 1$ (with $C\geq1$), we have
    \begin{equation*}
        D(p(i) + kC\tilde\varepsilon \parallel p(i)) \geq kCD(p(i) + \tilde\varepsilon \parallel p(i)) \geq \frac{k\ln(2i)}{n}.
    \end{equation*}
    Now using the Chernoff bound from Lemma \ref{lemma:chernoff_bound},
    \begin{align*}
        \Pbb\paren{\hat p_n(i)\geq p(i) + kC\tilde\varepsilon} \leq e^{-nD(p(i) + kC\tilde\varepsilon\parallel p(i))} \leq \frac{1}{(2i)^k}.
    \end{align*}
    Using the union-bound yields for any $k\geq 2$,
    \begin{equation}\label{eq:upper_tail_prob}
        \Pbb\paren{\sup_{i\geq 1}[\hat p_n(i) - p(i)] \geq kC\tilde\varepsilon } \leq \frac{1}{2^k}\sum_{i\geq 1}\frac{1}{i^k} \leq \frac{1}{2^{k-1}}.
    \end{equation}
    In particular, we obtain
    \begin{equation*}
        \Delta_n^+(\p) = \Ebb\sqb{\sup_{i\geq 1}[\hat p_n(i) - p(i)] } \leq  2C\tilde\varepsilon + C\tilde\varepsilon\sum_{k\geq 2} \Pbb\paren{\sup_{i\geq 1}[\hat p_n(i) - p(i)] \geq kC\tilde\varepsilon } \leq 3C\tilde\varepsilon.
    \end{equation*}
    This already gives the desired upper bound whenever say $\varepsilon\geq \frac{1}{4n}$, since this implies $\tilde\varepsilon\leq 4\varepsilon$. We now consider the case when $\varepsilon <\frac{1}{4n}.$ As discussed above, there exists $i\geq 1$ such that $p(i)+\varepsilon\geq p(i) + \varepsilon_i \geq \frac{1}{n}$. In particular, $p(i) \geq \frac{3}{4n}$. Now note that for $n\geq 2$,
    \begin{equation*}
        \Pbb\paren{\hat p_n(i) \geq p(i)+ \frac{1}{n}} = \Pbb\paren{\hat p_n(i) \geq \frac{2}{n}} = 1-\paren{1-\frac{1}{n}}^n - \paren{1-\frac{1}{n}}^{n-1}\geq 1-\frac{1}{e}-\frac{1}{2}> \frac{c_0}{2i}.
    \end{equation*}
    As a result, we should have $\varepsilon_i\geq \frac{1}{n}$, which is contradictory. This shows that the upper bound holds in all considered cases, which ends the proof of the claim for product distributions.

    The upper bound directly holds for general distributions on $\{0,1\}^\Nbb$ because it only used the union bound to analyze the effect between coordinates.
\end{proof}

The next step is to characterize the quantities $\varepsilon_{J,q}(n)$. Before doing so, we state some simple bounds on the KL-divergence.

\begin{lemma}\label{lemma:estimates_kl}
    Let $0\leq q,\varepsilon\leq \frac{1}{4}$ and suppose $\varepsilon\geq 8q$. Recall that $h(u) = (1+u)\ln(1+u)-u$. Then,
    \begin{equation*}
         \frac{\varepsilon}{2} \ln\frac{\varepsilon}{q} \leq q\h\left(\frac\varepsilon q\right) \leq D(q+\varepsilon\parallel q)\leq 2\varepsilon \ln\frac{\varepsilon}{q}.
    \end{equation*}
    Also, for any $0\leq q,\varepsilon\leq 1$ with $q+\varepsilon\leq \frac{1}{2}$,
    \begin{equation*}
         \frac{\varepsilon^2}{2(q+\varepsilon)} \leq q\h\left(\frac\varepsilon q\right) \leq D(q+\varepsilon\parallel q)\leq \frac{\varepsilon^2}{q}.
    \end{equation*}
\end{lemma}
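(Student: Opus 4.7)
My plan is to first establish the decomposition
\begin{equation*}
    D(q+\varepsilon \parallel q) = q\,h\!\left(\tfrac{\varepsilon}{q}\right) + (1-q)\,h\!\left(-\tfrac{\varepsilon}{1-q}\right),
\end{equation*}
where $h$ is extended to $(-1,\infty)$ by the same formula. This is just algebra: expanding $qh(\varepsilon/q) = (q+\varepsilon)\ln(1+\varepsilon/q) - \varepsilon$ and $(1-q)h(-\varepsilon/(1-q)) = (1-q-\varepsilon)\ln(1-\varepsilon/(1-q)) + \varepsilon$ and summing recovers the standard Bernoulli KL. Since $h'(0)=0$ and $h''(u)=1/(1+u)>0$, $h$ is convex with minimum $0$ at $0$, so both terms are non-negative. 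This immediately yields the middle inequality $qh(\varepsilon/q) \leq D(q+\varepsilon \parallel q)$, which is common to both parts.

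For the first part ($\varepsilon \geq 8q$), set $r=\varepsilon/q\geq 8$. For the left inequality, I would show $qh(r) = (q+\varepsilon)\ln(1+r) - \varepsilon \geq \tfrac{\varepsilon}{2}\ln r$ by first discarding the $q$ in the prefactor (to get a lower bound $\varepsilon\ln(1+r)-\varepsilon$) and then checking the scalar inequality $\ln(1+r) - 1 \geq \tfrac{1}{2}\ln r$, which reduces to $\ln r \geq 2$ and holds since $r\geq 8 > e^2$. For the right inequality, since the second term in the decomposition, $(1-q-\varepsilon)\ln(1-\varepsilon/(1-q))$, is non-positive, we have $D(q+\varepsilon\parallel q) \leq (q+\varepsilon)\ln(1+\varepsilon/q)$; then use $q+\varepsilon \leq \tfrac{9}{8}\varepsilon$ and $\ln(1+\varepsilon/q) = \ln(\varepsilon/q) + \ln(1+q/\varepsilon) \leq \ln(\varepsilon/q) + \tfrac{1}{8}$, and finish by noting $\tfrac{9}{64}\leq \tfrac{7}{8}\ln 8 \leq \tfrac{7}{8}\ln(\varepsilon/q)$.

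For the second part ($q+\varepsilon\leq \tfrac{1}{2}$), I would use the integral form
\begin{equation*}
    D(q+\varepsilon\parallel q) = \int_q^{q+\varepsilon}\!\!\int_q^r \frac{ds\,dr}{s(1-s)}.
\end{equation*}
For the upper bound, the integrand satisfies $1/(s(1-s)) \leq 2/q$ on $[q, q+\varepsilon]\subseteq[0,1/2]$, giving $D(q+\varepsilon\parallel q)\leq (2/q)\cdot \varepsilon^2/2 = \varepsilon^2/q$. For the left inequality, I would rely on the identity $h(r)=\int_0^r\frac{r-t}{1+t}\,dt$ (from $h'=\ln(1+\cdot)$) together with the pointwise bound $1/(1+t)\geq 1/(1+r)$ on $[0,r]$, yielding $h(r)\geq r^2/(2(1+r))$; multiplying by $q$ and setting $r=\varepsilon/q$ gives exactly $qh(\varepsilon/q)\geq \varepsilon^2/(2(q+\varepsilon))$.

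The main obstacle is purely bookkeeping: checking that the constants $8q$, $2\varepsilon\ln(\varepsilon/q)$, $\varepsilon^2/q$ are actually achieved by these simple manipulations rather than carrying over extra factors. Since every reduction above is an elementary scalar inequality on either $r$ or on $1/(s(1-s))$, no new idea is required beyond the decomposition and the integral representations.
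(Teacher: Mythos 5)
Your proposal is correct and follows essentially the same route as the paper's proof: both establish the middle inequality by isolating $qh(\varepsilon/q)$ inside $D(q+\varepsilon\parallel q)$ and showing the remainder is non-negative, both prove the first-part bounds by the same elementary manipulations of $\ln(1+\varepsilon/q)$ under $\varepsilon\geq 8q$, and your double-integral representation of $D$ (and the analogous one for $h$) is precisely the Taylor-with-integral-remainder argument the paper uses via second derivatives. The one cosmetic improvement is your identity $D(q+\varepsilon\parallel q)=q\,h(\varepsilon/q)+(1-q)\,h(-\varepsilon/(1-q))$, which packages the remainder as another $h$-term and makes its non-negativity immediate from convexity of $h$, whereas the paper checks the sign of the remainder by differentiating in $\varepsilon$; the two checks are equivalent in content.
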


\begin{proof}
First we show that $q\h\left(\frac\varepsilon q\right) \leq D(q+\varepsilon \parallel q)$:
\begin{align*}
D(q+\varepsilon\parallel q) &= (q+\varepsilon)\ln\left(\frac{q+\varepsilon}{q}\right) + (1-q-\varepsilon)\ln\left(\frac{1-q-\varepsilon}{1-q}\right)\\
&= q\h\left( \frac{\varepsilon}{q}\right)+ \varepsilon + (1-q-\varepsilon)\ln\left(\frac{1-q-\varepsilon}{1-q}\right) \\&\geq q\h\left( \frac{\varepsilon}{q}\right),
\end{align*}
because  $\varepsilon + (1-q-\varepsilon)\ln\left(\frac{1-q-\varepsilon}{1-q}\right) = 0$ for $\varepsilon = 0$ and is increasing in $\varepsilon$ since $\frac{\partial}{\partial \varepsilon} \left(\varepsilon + (1-q-\varepsilon)\ln\left(\frac{1-q-\varepsilon}{1-q}\right)\right)= - \ln\left(\frac{1-q-\varepsilon}{1-q}\right) \geq 0$.

We have for $\varepsilon \geq 8q$
\begin{equation*}
    q\h\left(\frac \varepsilon q\right) \geq  \varepsilon\ln\frac{\varepsilon}{q} - \varepsilon 
    \geq \frac{\varepsilon}{2}\ln\frac{\varepsilon}{q} + \varepsilon\left(\frac{1}{2}\ln\frac \varepsilon q - 1\right) 
    \geq \frac{\varepsilon}{2}\ln\frac{\varepsilon}{q},
\end{equation*}

since $\ln \frac \varepsilon q \geq \ln 8 \geq 2$.
\comment{
    We have
    \begin{equation*}
        D(q+\varepsilon\parallel q) \geq (q+\varepsilon)\ln\frac{q+\varepsilon}{q} + \ln\paren{1-\frac{4\varepsilon}{3}} \geq \varepsilon\paren{\ln\frac{\varepsilon}{q}  - 2}.
    \end{equation*}
    where we used $\ln(1-x)\geq -3x/2$ for $x\leq 1/3$. Because $\varepsilon\geq 10q$, we therefore obtain
    \begin{equation*}
        D(q+\varepsilon\parallel q) \geq \paren{1-\frac{2}{\ln(10)}}\varepsilon\ln \frac{\varepsilon}{q} \geq \frac{\varepsilon}{8}\ln \frac{\varepsilon}{q}.
    \end{equation*}
    }
    On the other hand,
    \begin{equation*}
        D(q+\varepsilon\parallel q) \leq (q+\varepsilon)\ln\frac{q+\varepsilon}{q} \leq \frac{9}{8}\varepsilon \ln\frac{2\varepsilon}{q} \leq 2\varepsilon \ln \frac{\varepsilon}{q}.
    \end{equation*}
    
    We now turn to the second bound when $\varepsilon\leq q$. Letting $f(\varepsilon) = D(q+\varepsilon\parallel q)$ and $g(\varepsilon) = qh( \frac \varepsilon q )$), we have $f''(\varepsilon) = \frac{1}{q+\varepsilon} + \frac{1}{1-q-\varepsilon}$ and $g''(\varepsilon) = \frac{1}{q+\varepsilon}$). As a result, for any $x\in[0,\varepsilon]$,
    \begin{equation*}
        \frac{1}{q+\varepsilon} \leq g''(x) \leq f''(x) \leq \frac{2}{q+x}\leq \frac{2}{q},
    \end{equation*}
    An application of Taylor's expansion theorem ends the proof. 
\end{proof}

\begin{lemma}\label{lemma:compare_kl_h}
    Let $0\leq \varepsilon ,q\leq \frac{1}{2}$. Then,
    \begin{equation*}
         q\h\paren{\frac{\varepsilon}{q}}\geq D\paren{q+\frac{\varepsilon}{2}\parallel q}.
    \end{equation*}
\end{lemma}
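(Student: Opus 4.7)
The plan is to introduce $\varphi(\varepsilon) := q\h(\varepsilon/q) - D(q+\varepsilon/2 \parallel q)$ for $\varepsilon \in [0, 1/2]$ at fixed $q \in (0, 1/2]$ (the case $q = 0$ being trivial by continuity, as both sides equal $+\infty$ for $\varepsilon > 0$), note that $\varphi(0) = 0$, and show that $\varphi$ is non-decreasing on this interval. Since $\varphi$ is smooth, it suffices to verify $\varphi'(\varepsilon) \geq 0$ throughout $[0, 1/2]$, from which the desired inequality $q\h(\varepsilon/q) \geq D(q+\varepsilon/2 \parallel q)$ follows immediately.

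For the derivative computation, using $\frac{d}{d\varepsilon}[q\h(\varepsilon/q)] = \ln\frac{q+\varepsilon}{q}$ together with the standard formula $\frac{d}{dp} D(p \parallel q) = \ln\frac{p(1-q)}{q(1-p)}$ and the chain rule, one obtains after simplification
\[
\varphi'(\varepsilon) \;=\; \tfrac{1}{2}\ln\frac{(q+\varepsilon)^2(1-q-\varepsilon/2)}{q(q+\varepsilon/2)(1-q)}.
\]
So $\varphi'(\varepsilon) \geq 0$ is equivalent to the polynomial inequality $(q+\varepsilon)^2(1-q-\varepsilon/2) \geq q(q+\varepsilon/2)(1-q)$. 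Expanding both sides, the difference factors as $\varepsilon \cdot S(q, \varepsilon)$, where
\[
S(q, \varepsilon) \;:=\; \tfrac{3}{2}q - 2q^2 + (1-2q)\varepsilon - \tfrac{1}{2}\varepsilon^2.
\]

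It then remains to show $S(q, \varepsilon) \geq 0$ throughout $[0, 1/2]^2$. Viewed as a function of $\varepsilon$, $S(q, \cdot)$ is a concave quadratic with leading coefficient $-1/2$, so on $[0, 1/2]$ one has $S(q, \varepsilon) \geq \min\{S(q, 0), S(q, 1/2)\}$. The value $S(q, 0) = q(\tfrac{3}{2} - 2q)$ is non-negative since $q \leq \tfrac{1}{2} < \tfrac{3}{4}$; and $S(q, 1/2) = \tfrac{3}{8} + \tfrac{q}{2} - 2q^2$ is itself concave in $q$, so its minimum on $[0, 1/2]$ is achieved at an endpoint and equals $\min\{\tfrac{3}{8}, \tfrac{1}{8}\} = \tfrac{1}{8} > 0$. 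Both endpoint values being non-negative yields $S(q, \varepsilon) \geq 0$ everywhere on $[0,1/2]^2$, which completes the proof.

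The main care required is in the polynomial bookkeeping to arrive at the clean factorization $\varepsilon \cdot S(q, \varepsilon)$; once this form is obtained, the remaining concavity-and-endpoints check is elementary. I do not anticipate any real obstacle beyond this algebraic step, and in particular no appeal to additional tools such as the sub-Gaussian or Bennett-type inequalities from Lemma~\ref{lemma:estimates_kl} is needed.
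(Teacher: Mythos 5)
Your proof is correct and follows the same overall strategy as the paper: define the difference $\varphi(\varepsilon) = q\h(\varepsilon/q) - D(q+\varepsilon/2\parallel q)$, observe $\varphi(0)=0$, and show $\varphi'\geq 0$. The derivative formula, the factorization $\varepsilon\cdot S(q,\varepsilon)$, and the concavity-and-endpoints check all verify. Where you differ is purely tactical: the paper avoids the full polynomial expansion by first relaxing the term $-\tfrac{1}{2}\ln\tfrac{q+\varepsilon/2}{q}$ to $-\tfrac{1}{2}\ln\tfrac{q+\varepsilon}{q}$ inside the derivative (using $q+\varepsilon/2\leq q+\varepsilon$), which collapses $\varphi'(\varepsilon)$ to the lower bound $\tfrac{1}{2}\ln\tfrac{(q+\varepsilon)(1-q-\varepsilon/2)}{q(1-q)}$; positivity of that single product then reduces to checking $1-\tfrac{3}{2}q-\tfrac{\varepsilon}{2}\geq 0$, which is immediate from $q,\varepsilon\leq\tfrac12$. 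Your version trades that one-line relaxation for a complete expansion followed by a two-stage endpoint check of the resulting quadratic; it is somewhat more work but also yields a tight characterization of $\varphi'$ rather than a lower bound. Both are valid elementary proofs of the lemma.
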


\begin{proof}
    Let $f(\varepsilon) = q\h\paren{\frac{\varepsilon}{q}} - D\paren{q+\frac{\varepsilon}{2}\parallel q}$. Then, $f(0)=0$ and for any $0\leq \varepsilon\leq \frac{1}{2}$
    \begin{align*}
        f'(\varepsilon) &\geq \frac{1}{2} \ln \paren{\frac{q+\varepsilon}{q}} + \frac{1}{2}\ln \paren{\frac{1-q-\varepsilon/2}{1-q}}\\
        &\geq \frac{1}{2} \ln \paren{\frac{q(1-q)+\varepsilon(1-3q/2-\varepsilon/2)}{q(1-q)}} \geq 0.
    \end{align*}
    Hence, for any $0\leq \varepsilon\leq \frac{1}{2}$, we have $f(\varepsilon)\geq 0$.
\end{proof}

We now present bounds on $\varepsilon_{J,q}(n)$. To do so, we start by showing upper bounds using the function $\phi_{J,q}(n)$.

\begin{proposition}
\label{prop:upper_bound_eps}
    There exists a universal constant $C_1\geq 1$ such that for all $n,J\geq 1$ and $q\in (0,1/2]$,
    \begin{equation*}
        \varepsilon_{J,q}(n) \leq C_1 \cdot \phi_{J,q}(n).
    \end{equation*}
\end{proposition}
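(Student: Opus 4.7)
The strategy is to apply the Chernoff bound from Lemma~\ref{lemma:chernoff_bound} with $\varepsilon := C_1 \phi_{J,q}(n)$ for a suitably large universal constant $C_1$, and to show that $nD(q+\varepsilon \parallel q) \geq \ln(2J/c_0)$. This yields $\Pbb_{Y\sim\Bcal(n,q)}(Y/n \geq q+\varepsilon) \leq c_0/(2J)$, and by the very definition of $\varepsilon_{J,q}(n)$ we get $\varepsilon_{J,q}(n) \leq \varepsilon = C_1\phi_{J,q}(n)$. Since $\ln(2J/c_0) \leq C_0 \ln(J+1)$ for some universal $C_0$ when $J\geq 1$, it suffices to verify $nD(q+\varepsilon\parallel q) \gtrsim \ln(J+1)$ in each of the three regimes of $\phi_{J,q}(n)$.

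The constant regime $n \leq \ln(J+1)/\ln(1/q)$, in which $\phi_{J,q}(n) = 1$, is immediate since $\varepsilon_{J,q}(n) \leq 1-q \leq 1$ always. For the intermediate regime $\ln(J+1)/\ln(1/q) \leq n \leq \ln(J+1)/(eq)$, setting $x := \ln(J+1)/(nq) \geq e$ and $\varepsilon := C_1 \ln(J+1)/(n\ln x)$, I first check the hypothesis $\varepsilon \geq 8q$, which reduces to $C_1 x \geq 8\ln x$ (true for $x\geq e$ and $C_1 \geq 8$). The first estimate in Lemma~\ref{lemma:estimates_kl} then gives $D(q+\varepsilon\parallel q) \geq \frac{\varepsilon}{2}\ln(\varepsilon/q)$. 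Computing $\ln(\varepsilon/q) = \ln C_1 + \ln x - \ln\ln x \geq \frac{1}{2}\ln x$ for $x\geq e$ (using the elementary fact $\ln \ln x \leq \frac{1}{2}\ln x$ valid on $[e,\infty)$), one obtains $nD(q+\varepsilon\parallel q) \geq C_1 \ln(J+1)/4$, sufficient for $C_1 \geq 4C_0$.

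For the sub-Gaussian regime $n \geq \ln(J+1)/(eq)$, take $\varepsilon := C_1\sqrt{q \ln(J+1)/n}$. Since $n \geq \ln(J+1)/(eq)$, we have $\varepsilon/q \leq C_1\sqrt{e}$, so $q+\varepsilon \leq (1+C_1\sqrt{e})q \leq \frac{1}{2}$ provided $q$ is small enough. The second estimate in Lemma~\ref{lemma:estimates_kl} then gives $D(q+\varepsilon\parallel q) \geq \varepsilon^2/(2(q+\varepsilon))$, hence
\[
    nD(q+\varepsilon\parallel q) \geq \frac{C_1^2 \ln(J+1)}{2(1+C_1\sqrt{e})},
\]
exceeding $C_0 \ln(J+1)$ for $C_1$ large enough.

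The main obstacle is the boundary where the hypotheses of Lemma~\ref{lemma:estimates_kl} fail. First, regime~2 is actually empty when $q > 1/e$, since one needs $\ln(1/q) \geq eq$ for the interval to be non-empty; so $q \leq 1/e$ automatically in regime~2, and the only remaining sub-case $q \in (1/4, 1/e]$ is handled by observing that the corresponding window of $n$ has length $\Theta(\ln(J+1))$ on which $\phi_{J,q}(n) = \Theta(1)$, so the trivial bound $\varepsilon_{J,q}(n) \leq 1-q$ suffices after enlarging $C_1$. In regime~3 when $q+\varepsilon > 1/2$ (possible only for $q$ close to $1/2$), Pinsker's inequality $D(q+\varepsilon\parallel q) \geq 2\varepsilon^2$ yields $nD(q+\varepsilon\parallel q) \geq 2 C_1^2 q \ln(J+1) \geq \frac{C_1^2}{2}\ln(J+1)$, completing the argument.
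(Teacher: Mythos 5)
Your proof takes essentially the same approach as the paper's: pick $\varepsilon := C_1\phi_{J,q}(n)$, lower bound $nD(q+\varepsilon\parallel q)$ via Lemma~\ref{lemma:estimates_kl} in each regime, and invoke the Chernoff bound together with the definition of $\varepsilon_{J,q}(n)$. (The paper works in the contrapositive direction --- Chernoff applied at $\varepsilon_{J,q}(n)$ yields an upper bound on the KL divergence, which it then compares to the KL at $C_1\phi(n)$ --- but the two formulations are logically equivalent.)

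There is, however, a gap in regime~2. The first estimate of Lemma~\ref{lemma:estimates_kl} requires \emph{both} $q\leq\tfrac14$ and $\varepsilon\leq\tfrac14$. Your boundary discussion handles $q\leq\tfrac14$ and you check $\varepsilon\geq 8q$, but you never verify $\varepsilon = C_1\phi_{J,q}(n)\leq\tfrac14$, and this hypothesis genuinely fails: near the left endpoint $n=\ln(J+1)/\ln(1/q)$ of regime~2 one has $\phi_{J,q}(n) = \ln(1/q)/\bigl(\ln(1/q)+\ln\ln(1/q)\bigr)\to 1$ as $q\to 0$, so $\varepsilon > \tfrac14$ for every $C_1\geq 1$ once $q$ is small. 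The standard remedy, and the one the paper uses explicitly (``without loss of generality, suppose $\phi(n)\leq\tfrac1{40}$''), is a dichotomy: if $\phi_{J,q}(n) > 1/(4C_1)$ then $\varepsilon_{J,q}(n)\leq 1-q < 4C_1\phi_{J,q}(n)$ trivially, and otherwise $\varepsilon\leq\tfrac14$ so Lemma~\ref{lemma:estimates_kl} applies. Without this case split your regime~2 argument does not go through as stated. As a minor point, in regime~3 the parenthetical ``$q+\varepsilon > \tfrac12$ possible only for $q$ close to $\tfrac12$'' should really say $q>1/(2(1+C_1\sqrt e))$; the chain $nD\geq 2C_1^2 q\ln(J+1)$ then still gives the required lower bound for $C_1$ large, so that boundary case is sound (and in fact addresses a corner the paper's own regime-3 argument glosses over).
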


\begin{proof}
    The proof relies on the Chernoff bound from Lemma \ref{lemma:chernoff_bound}. In the rest of this proof, we let $Y\sim\Bcal(n,p)$, $\hat p_n = \frac{Y}{n}$ and $\varepsilon:=\varepsilon_{J,q}(n)$. We have $\frac{c_0}{2J} <\Pbb(\hat p_n \geq q + \varepsilon) \leq e^{-nD(q+\varepsilon\parallel q)}.$ As a result, this shows
    \begin{equation*}
        D(q+\varepsilon\parallel q) \leq \frac{\ln\frac{2J}{c_0}}{n}.
    \end{equation*}
    The upper bound given in the first regime $n\leq \frac{\ln(J+1)}{\ln\frac{1}{q}}$ is trivial. We then turn to the second regime. For convenience, we will denote $\phi(n):=\phi_{J,q}(n)$.
    
    \paragraph{Regime $\frac{\ln(J+1)}{\ln\frac{1}{q}}\leq n \leq  \frac{\ln(J+1)}{eq}$.} We first observe that the function $n\ln \frac{\ln(J+1)}{nq}$ is non-decreasing in that regime. As a result, we always have $\phi(n) \geq \phi\paren{\frac{\ln(J+1)}{eq}} = eq.$ Next, the upper bounds are immediate if $q\geq \frac{1}{4}$ since using a constant $C_1\geq 4$ would yield a trivial upper bound $1$. We therefore suppose that $q\leq \frac{1}{4}$. Similarly, without loss of generality, suppose $\phi(n)\leq \frac{1}{40}$. As a result, by Lemma \ref{lemma:estimates_kl}, for any constant $\alpha\geq 1$,
    % \begin{equation*}
    %     D(q+3 \alpha \phi(n)\parallel q)\geq \alpha\phi(n) \ln \frac{\alpha\phi(n)}{q}.
    % \end{equation*}
    \begin{equation*}
        D(q+3 \alpha \phi(n)\parallel q)\geq \frac{3}{2}\alpha\phi(n) \ln \frac{3\alpha\phi(n)}{q}.
    \end{equation*}
    Now if $x\geq 1$ is the solution to the equation $x\ln x = 2\frac{\ln\frac{2J}{c_0}}{nq}:=z\geq 2$, one has precisely
    \begin{equation*}
        x\asymp \frac{z}{\ln z} \asymp \frac{\ln(J+1)}{nq \ln\frac{\ln(J+1)}{nq}}.
    \end{equation*}
    As a result, there exists a constant $\alpha\geq 1$ sufficiently large such that either $3\alpha \phi(n) \geq 1/4$ (in which case the bound for this regime is immediate for sufficiently large $C_1$), or
    \begin{equation*}
        D(q+3 \alpha \phi(n)\parallel q) \geq \frac{\ln\frac{2J}{c_0}}{n}.
    \end{equation*}
    This implies $\varepsilon \leq 3\alpha \phi(n)$.

    \paragraph{Regime $ n \geq \frac{\ln(J+1)}{eq}$.}
    In this regime, we have $\phi(n) \leq q\sqrt{e}\leq 2q$. Using the second estimate from Lemma \ref{lemma:estimates_kl}, we have for any constant $\gamma\geq 1$,
    \begin{equation*}
        D(q+\gamma \phi(n)\parallel q) \geq \frac{\gamma^2\ln(J+1)}{2(1+2\gamma)n}.
    \end{equation*}
    As a result, there exists a universal constant $\gamma\geq 1$ such that $D(q+\gamma \phi(n)\parallel q) \geq 2\frac{\ln\frac{2J}{c_0}}{n}$, which implies $\varepsilon \leq \gamma\phi(n)$. This ends the proof of the proposition.
\end{proof}

We next turn to lower bounds.

\begin{proposition}
\label{prop:lower_bound}
    There is a universal constants $c_2>0$ such that for all $J,n\geq 1$ and $q\in (0,1/2]$ satisfying $1-(1-q)^n> \frac{c_0}{2J}$ (e.g. for $q\geq \frac{c_0}{nJ}$), one has
    \begin{equation*}
        \varepsilon_{J,q}(n) \geq \paren{\frac{1}{n}-q}\lor c_2\cdot  \phi_{J,q}(n).
    \end{equation*}
    On the other hand, if $1-(1-q)^n\leq\frac{c_0}{2J}$, we have $\varepsilon_{J,q}(n) = -q$.
\end{proposition}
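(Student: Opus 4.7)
The proposition has three assertions, which I would handle in turn. The first is the degenerate case $1-(1-q)^n \leq c_0/(2J)$: here $\Pbb_{Y\sim\Bcal(n,q)}(Y\geq 1) = 1-(1-q)^n \leq c_0/(2J)$, so for any $\varepsilon>-q$ the event $\{Y/n\geq q+\varepsilon\}$ sits inside $\{Y\geq 1\}$ and already has probability at most $c_0/(2J)$; since any $\varepsilon\leq -q$ gives $\Pbb(Y/n\geq q+\varepsilon)=1$, this forces $\varepsilon_{J,q}(n)=-q$. The Poissonian lower bound $\varepsilon_{J,q}(n)\geq 1/n-q$ is almost as quick: whenever $-q<\varepsilon<1/n-q$, the event $\{Y/n\geq q+\varepsilon\}$ equals $\{Y\geq 1\}$, which by hypothesis has probability $1-(1-q)^n>c_0/(2J)$, so such $\varepsilon$ lie outside the defining set of the infimum.

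The substance of the proof is establishing $\varepsilon_{J,q}(n)\geq c_2\phi_{J,q}(n)$ for a universal constant $c_2>0$. I would prove this by showing $\Pbb(Y/n\geq q+c_2\phi_{J,q}(n))>c_0/(2J)$, which forces $c_2\phi_{J,q}(n)$ out of the defining set of the infimum. If $c_2\phi_{J,q}(n)<1/n-q$, the Poissonian bound just proved already suffices. Otherwise $q+c_2\phi_{J,q}(n)\geq 1/n$, and choosing $c_2\leq 1/4$ ensures $q+c_2\phi_{J,q}(n)\leq (1+q)/2$, so Lemma~\ref{lemma:anti_concentration} applies and yields
\[
\Pbb(Y/n\geq q+c_2\phi_{J,q}(n))\geq c_0\exp\paren{-Cn\,D(q+c_2\phi_{J,q}(n)\parallel q)}.
\]
It then suffices to show $Cn\,D(q+c_2\phi_{J,q}(n)\parallel q)<\ln(2J)$ in each of the three regimes defining $\phi_{J,q}(n)$.

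In the constant regime $n\leq\ln(J+1)/\ln(1/q)$ with $\phi_{J,q}(n)=1$, I would bypass anti-concentration entirely: $\Pbb(Y=n)=q^n\geq e^{-\ln(J+1)}=1/(J+1)$ exceeds $c_0/(2J)$ since $c_0<1/4$, and this alone forces $\varepsilon_{J,q}(n)\geq 1-q\geq 1/2$, which dominates $c_2$ once $c_2\leq 1/2$. In the intermediate regime $\ln(J+1)/\ln(1/q)\leq n\leq\ln(J+1)/(eq)$, writing $u=\ln(J+1)/(nq)\geq e$ gives the convenient identities $n\phi_{J,q}(n)=\ln(J+1)/\ln u$ and $\phi_{J,q}(n)/q=u/\ln u$; I would split on whether $c_2\phi_{J,q}(n)\geq 8q$, applying Lemma~\ref{lemma:estimates_kl} in its logarithmic form ($D\leq 2\varepsilon\ln(\varepsilon/q)$) when it holds, and in its quadratic form ($D\leq 2\varepsilon^2/q$, which further reduces to $16c_2\phi_{J,q}(n)$ via $\phi_{J,q}(n)/q<8/c_2$) when it does not, so that in both sub-cases $Cn\,D$ telescopes to at most a universal constant times $c_2\ln(J+1)\leq c_2\ln(2J)$. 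In the sub-Gaussian regime $n\geq\ln(J+1)/(eq)$, the quadratic KL estimate combined with the identity $n\phi_{J,q}(n)^2/q=\ln(J+1)$ gives $Cn\,D\leq 2Cc_2^2\ln(2J)$. Choosing $c_2$ small enough relative to the absolute constant $C$ from Lemma~\ref{lemma:anti_concentration} makes all three bounds strictly below $\ln(2J)$.

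The main obstacle I anticipate is the intermediate regime: the sharp logarithmic KL estimate of Lemma~\ref{lemma:estimates_kl} requires the separation $\varepsilon\geq 8q$, yet $\phi_{J,q}(n)$ can be as small as $eq$ at the right endpoint of the regime, violating that hypothesis. The two-sub-case split is the natural workaround, exchanging the logarithmic KL bound for the quadratic one and exploiting the very hypothesis $c_2\phi_{J,q}(n)<8q$ to convert the factor $\phi_{J,q}(n)/q$ into an absolute constant, thereby recovering the desired $\ln(J+1)$ scaling. A small bookkeeping point is that $\varepsilon_{J,q}(n)$ is the infimum of a step-function threshold defined on a discrete grid, but the strict probability inequality $\Pbb>c_0/(2J)$ produced above translates cleanly into the desired non-strict lower bound on $\varepsilon_{J,q}(n)$.
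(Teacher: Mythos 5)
Your proof is correct and follows essentially the same strategy as the paper: the degenerate and Poissonian cases are handled identically, and the KL-based lower bound via the anti-concentration Lemma~\ref{lemma:anti_concentration} is the common engine. The main stylistic difference is in the third step: the paper introduces an auxiliary quantity $x(n)$ implicitly as the solution to $D(q+x(n)\parallel q)=\frac{\ln(3J/2)}{Cn}$, deduces $\varepsilon_{J,q}(n)\geq x(n)$ via anti-concentration, and then separately shows $x(n)\asymp\phi_{J,q}(n)$ by inverting the KL estimates; you instead substitute $\varepsilon=c_2\phi_{J,q}(n)$ directly and verify that $Cn\,D(q+c_2\phi_{J,q}(n)\parallel q)<\ln(2J)$ regime by regime. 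The two are logically equivalent (solving for $x(n)$ versus verifying that $c_2\phi$ satisfies the inequality defining $x(n)$), but your direct substitution is a bit cleaner to audit because it never requires existence or uniqueness of a KL root. Your treatment of the constant regime via $\Pbb(Y=n)=q^n\geq (J+1)^{-1}>c_0/(2J)$ is a genuine simplification: the paper's argument does not explicitly handle $\phi_{J,q}(n)=1$ and would otherwise need anti-concentration at $q+c_2$, which requires extra care when $n$ is tiny. One small caveat shared by both proofs: Lemma~\ref{lemma:estimates_kl}'s quadratic upper bound $D(q+\varepsilon\parallel q)\leq\varepsilon^2/q$ is stated under the hypothesis $q+\varepsilon\leq\frac{1}{2}$, which your choice $c_2\leq 1/4$ does not guarantee when $q$ is close to $\frac{1}{2}$; this is harmless since a near-identical bound with a modestly larger constant holds for $q+\varepsilon\leq\frac{3}{4}$, but it is worth flagging that the lemma as stated cannot be invoked verbatim in that corner. (Also, a small slip: the lemma gives $D\leq\varepsilon^2/q$, not $2\varepsilon^2/q$, so your ``$16c_2\phi$'' should be ``$8c_2\phi$''; this has no effect on the conclusion.)
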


\begin{proof}
    As in the previous proof, we let $\hat p_n =\frac{Y}{n}$ where $Y\sim\Bcal(n,p)$. We compute $\Pbb(\hat p_n \geq 1/n) = 1-(1-q)^n$. As a result, if $1-(1-q)^n> \frac{c_0}{2J}$, we have $q+\varepsilon_{J,q}(n)\geq \frac{1}{n}$ and otherwise, $q+\varepsilon_{J,q}(n)=0$. We now prove that $q\geq \frac{c_0}{nJ}$ suffices to obtain $1-(1-q)^n> \frac{c_0}{2J}$. Note that $1-(1-q)^n\geq 1-e^{-qn}$. If $q\geq \frac{\ln 2}{n}$, we have $\Pbb(\hat p_n \geq 1/n) \geq \frac{1}{2} > \frac{c_0}{2J}$. Otherwise, since $qn\leq \ln 2$ and the exponential function is convex, we have
    \begin{equation*}
        \Pbb(\hat p_n \geq 1/n) \geq \frac{nq}{2\ln 2} >\frac{c_0}{2J}.
    \end{equation*}
    
    We assume from now on that $1-(1-q)^n> \frac{c_0}{2J}$. Let $x(n)$ be the solution to the equation
    \begin{equation*}
        D(q+x(n)\parallel q) = \frac{\ln\frac{3J}{2}}{Cn}.
    \end{equation*}
    If $q+x(n)\geq \frac{1}{n}$, Lemma \ref{lemma:anti_concentration} shows that
    \begin{equation*}
        \Pbb(\hat p_n \geq q+x(n)) \geq c_0 e^{-CnD(q+x(n)\parallel q) } \geq \frac{2c_0}{3J} >\frac{c_0}{2J}.
    \end{equation*}
    As a result, if $q+x(n) \geq \frac{1}{n}$, we obtain $\varepsilon_{J,q}(n) \geq x(n)$. Thus, in both cases, we obtain
    \begin{equation*}
        \varepsilon_{J,q}(n) \geq \paren{\frac{1}{n}-q}\lor x(n).
    \end{equation*}
    It remains to compute an estimate of $x(n)$. Using Lemma \ref{lemma:estimates_kl}, if $x(n)\geq 8q$, we have $D(q+x(n)\parallel q)\asymp x(n)\ln\frac{x(n)}{q}$, so that similarly as in the proof of Proposition \ref{prop:upper_bound_eps}, we have with $z=\frac{\ln\frac{3J}{2}}{Cnq}$,
    \begin{equation*}
        x(n)\asymp q\frac{z}{\ln (2+z)}\asymp \frac{\ln(J+1)}{n\ln\paren{2+\frac{\ln(J+1)}{nq}}}.
    \end{equation*}
    On the other hand, if $x(n)\leq 10q$, the second bounds of Lemma \ref{lemma:estimates_kl} show that $D(q+x(n)\parallel q)\asymp \frac{x(n)^2}{q}$. As a result, this yields
    \begin{equation*}
        x(n)\asymp \sqrt{\frac{q\ln\frac{3J}{2}}{n}}\asymp \sqrt{\frac{q\ln(J+1)}{n}}.
    \end{equation*}
    The cutoff for $x(n)$ corresponds to $n\asymp\frac{\ln(J+1)}{eq}$, and the two estimates of $x(n)$ match in this complete regime (if $a\frac{\ln(J+1)}{eq} \leq n \leq b\frac{\ln(J+1)}{eq}$ for some universal constants $0<a\leq b$) up to constants. Recalling that $x(n)\leq 1$, we obtained exactly $x(n)\asymp \phi(n)$. This proves that for some universal constant $c_2>0$, one has
    \begin{equation*}
        \varepsilon_{J,q}(n) \geq \paren{\frac{1}{n}-q}\lor c_2\cdot \phi(n),
    \end{equation*}
    which ends the proof of the proof of the proposition.
\end{proof}

\begin{proof}{\textbf{of Proposition \ref{prop:combined_bound_eps}}}
  Propositions \ref{prop:upper_bound_eps} and \ref{prop:lower_bound} exactly prove Proposition \ref{prop:combined_bound_eps}.   
\end{proof}

We now combine the two results when possible, to give estimates on $\Delta_n^+(\p)$.

\begin{proposition}
\label{prop:large_p}
    For any $p\in[0,\frac{1}{2}]^\Nbb_{\downarrow 0}$ and $n\geq 1$ such that there exists $j\geq 1$ with $p(j)\geq \frac{c}{nj}$, we have
    \begin{equation*}
        \Delta_n^+(\p) \asymp \sup_{i\geq 1}\phi_{i,p(i)}(n).
    \end{equation*}
\end{proposition}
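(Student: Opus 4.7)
The plan is to combine the reduction of Proposition~\ref{prop:reduction} with the estimates of Proposition~\ref{prop:combined_bound_eps}. First I would observe that the hypothesis $p(j) \geq c/(nj)$ for some $j$ (taking $c$ at most the threshold $c_0$ from Proposition~\ref{prop:combined_bound_eps}) forces $\varepsilon_{j,p(j)}(n) \geq 0$, which is the precondition needed to invoke Proposition~\ref{prop:reduction}. That reduction then gives $\Delta_n^+(\p) \asymp \sup_{i \geq 1} \varepsilon_{i,p(i)}(n)$, and the task reduces to comparing this supremum to $\sup_{i \geq 1} \phi_{i,p(i)}(n)$.

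The upper bound $\Delta_n^+(\p) \lesssim \sup_i \phi_{i,p(i)}(n)$ is immediate: the inequality $\varepsilon_{i,p(i)}(n) \leq C_1 \phi_{i,p(i)}(n)$ in Proposition~\ref{prop:combined_bound_eps} holds unconditionally, since in the Poissonian case $\varepsilon_{i,p(i)}(n) = -p(i) \leq 0 \leq \phi_{i,p(i)}(n)$. For the matching lower bound, I would split the supremum over $i$ according to whether $p(i) \geq c_0/(ni)$ (the ``non-Poissonian'' set) or $p(i) < c_0/(ni)$. On the non-Poissonian set, Proposition~\ref{prop:combined_bound_eps} gives $\varepsilon_{i,p(i)}(n) \gtrsim \phi_{i,p(i)}(n)$, so this part of $\sup_i \phi_{i,p(i)}(n)$ is already captured by $\sup_i \varepsilon_{i,p(i)}(n)$.

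The key missing piece is to control the Poissonian contribution to $\sup_i \phi_{i,p(i)}(n)$. For this I would first verify by direct inspection of the three regimes in Eq~\eqref{eq:definition_phi} that $\phi_{J,q}(n)$ is non-decreasing in $q$, so that on the Poissonian set $\phi_{i,p(i)}(n) \leq \phi_{i, c_0/(ni)}(n)$. Plugging $q = c_0/(ni)$ into the definition places us in the middle regime of $\phi$, in which both the numerator $\ln(i+1)$ and the denominator $\ln(\ln(i+1)/(nq)) = \ln(i\ln(i+1)/c_0)$ behave like $\ln i$, giving the uniform estimate $\phi_{i,c_0/(ni)}(n) \lesssim 1/n$. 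It therefore suffices to exhibit one index $i^\star$ with $\varepsilon_{i^\star, p(i^\star)}(n) \gtrsim 1/n$, which I would get from a short case split on the hypothesized $j$: if $p(j) \leq 1/(2n)$ then Proposition~\ref{prop:combined_bound_eps} directly gives $\varepsilon_{j,p(j)}(n) \geq 1/n - p(j) \geq 1/(2n)$; otherwise monotonicity of $\p$ forces $p(1) > 1/(2n) \geq c_0/(n \cdot 1)$, placing $1$ in the non-Poissonian set, and then $\varepsilon_{1,p(1)}(n) \gtrsim \phi_{1,p(1)}(n) \geq \phi_{1,1/(2n)}(n) \asymp 1/n$ by the monotonicity of $\phi$ in $q$ and a direct computation. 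The main obstacle is the uniform $1/n$ bound on $\phi_{i,c_0/(ni)}(n)$: one must rule out that the boundary regime (regime~1) or small-$i$ effects can push $\phi$ above $O(1/n)$, which is why the regime identification and the monotonicity of $\phi$ have to be handled carefully before taking suprema.
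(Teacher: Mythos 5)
Your proposal is correct and follows essentially the same strategy as the paper's proof: invoke Proposition~\ref{prop:reduction} to reduce to $\sup_i \varepsilon_{i,p(i)}(n)$, use Proposition~\ref{prop:combined_bound_eps} on non-Poissonian indices, show the Poissonian indices contribute only $O(1/n)$ to $\sup_i \phi_{i,p(i)}(n)$, and exhibit an index certifying $\sup_i\varepsilon_{i,p(i)}(n)\gtrsim 1/n$. The only cosmetic difference is that you bound the Poissonian $\phi_{i,p(i)}(n)$ via monotonicity of $\phi_{J,q}(n)$ in $q$ followed by a computation at $q=c_0/(ni)$, whereas the paper works directly with the inequality $p(i)<c_0/(ni)$ inside the regime-2 formula; likewise, you certify the $1/n$ lower bound by a case split on $p(j)\lessgtr 1/(2n)$ using the $\frac{1}{n}-q$ term, while the paper argues $\phi_{j,p(j)}(n)\gtrsim 1/n$ via a case split over the three regimes. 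Both routes carry the same content and constants.
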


\begin{proof}
    For any $j\geq 1$ such that $p(j)\geq \frac{c_0}{nj}$, using Proposition \ref{prop:combined_bound_eps}, we have $\varepsilon_{j,p(j)}(n) \asymp \phi_{j,p(j)}(n)\geq 0$. In fact, whenever $\varepsilon_{j,p(j)}(n)\geq 0$, these propositions imply $\varepsilon_{j,p(j)}(n) \asymp \phi_{j,p(j)}(n)$. Then, Proposition \ref{prop:reduction} implies that $\Delta_n^+(\p)\asymp \sup_{i\geq 1}\varepsilon_{i,p(i)}(n)$. For convenience, let $\varepsilon = \sup_{i\geq 1}\varepsilon_{i,p(i)}(n)\geq \frac{c_3}{n}$. In order to prove the theorem, given Proposition \ref{prop:combined_bound_eps}, it remains to prove that if $\varepsilon_{i,p(i)}(n)<0$ for some $i\geq 0$, we have $\phi_{i,p(i)}(n)\lesssim \phi_{j,p(j)}(n)$ ($j\geq 1$ is such that $p(j)\geq \frac{c_0}{nj}$).

    First, necessarily $p(i)<\frac{c_0}{ni}$. As a result, $\frac{\ln(j+1)}{\ln\frac{1}{p(i)}}\leq 1$ and hence the first regime for $\phi_{i,p(i)}$ is not present. Further,
    \begin{equation*}
        \frac{\ln(i+1)}{ep(i)} \geq \frac{i\ln(i+1)}{ec_0} n \geq \frac{\ln 2}{ec_0}n.
    \end{equation*}
    This proves that either $n$ falls the second regime for $\phi_{i,p(i)}(n)$, i.e., $\frac{\ln(i+1)}{\ln\frac{1}{p(i)}}\leq n\leq \frac{\ln(i+1)}{ep(i)}$, or $n\asymp\frac{\ln(i+1)}{ep(i)}$. In both cases,
    \begin{equation*}
        \phi_{i,p(i)}(n) \asymp \frac{\ln(i+1)}{n\ln\paren{2+\frac{\ln(i+1)}{np(i)}}} \leq \frac{\ln(i+1)}{n\ln\paren{2+\frac{i\ln(i+1)}{c_0}}} \lesssim\frac{1}{n}.
    \end{equation*}
    As a result, there exists a universal constant $C_3>0$ such that $\phi_{i,p(i)}(n)\leq \frac{C_3}{n}$. Now recall that $p(j)\geq \frac{c_0}{nj}$. We first consider the case when $p(j)\leq \frac{\ln(j+1)}{en}$. In this case, $\phi_{j,p(j)}(n)$ lies in one of the two regimes. In the first regime, we have directly $\phi_{j,p(j)}(n)\asymp 1\gtrsim \frac{1}{n}$. In the second case, we have
    \begin{equation*}
        \phi_{j,p(j)}(n) = \frac{\ln(j+1)}{n\ln\frac{\ln(j+1)}{np(j)}} \geq \frac{\ln(j+1)}{n\ln\frac{j\ln(j+1)}{c_0}}\gtrsim \frac{1}{n}.
    \end{equation*}
    We now consider the case when $p(j)\geq \frac{\ln(j+1)}{en}$. In this case, $\phi_{j,p(j)}(n)$ lies in the third regime which yields
    \begin{equation*}
        \phi_{j,p(j)}(n) =\sqrt{\frac{p(j)\ln(j+1)}{n}}\geq \frac{\ln(j+1)}{\sqrt e \cdot n}\gtrsim \frac{1}{n}.
    \end{equation*}
    As a result, there is a constant $c_3>0$ such that in all cases $\phi_{j,p(j)}(n)\geq \frac{c_3}{n}$. Putting everything together yields
    \begin{equation*}
        \sup_{i\geq 1}\phi_{i,p(i)}(n) \asymp \sup_{j\geq 1, \varepsilon_{j,p(j)}(n)\geq 0} \phi_{i,p(i)}(n) \asymp \sup_{i\geq 1} \varepsilon_{j,p(j)}(n) \asymp \Delta_n^+(\p).
    \end{equation*}
    This ends the proof of the proposition.
\end{proof}

It remains to consider the Poissonian case in which one has $p(j)\leq \frac{c}{nj}$ for all $i\geq 1$. Recall that we have $c_0<\frac{1}{2}$.

\begin{proposition}
\label{prop:small_p}
    For any $\p\in[0,\frac{1}{2}]^\Nbb_{\downarrow 0}$ and $n\geq 1$ such that for all $j\geq 1$, one has $p(j)\leq \frac{1}{2nj}$, then
    \begin{equation*}
        \Delta_n^+(\p) \asymp \frac{1}{n}\land \sum_{j\geq 1}p(j).
    \end{equation*}
\end{proposition}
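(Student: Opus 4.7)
The plan is to prove matching bounds $\Delta_n^+(\p) \asymp \frac{1}{n} \land \sum_{j\geq 1} p(j)$. The key intuition behind this Poissonian regime $p(j) \leq \frac{1}{2nj}$ is that each binomial $Y_j := n\hat p_n(j)$ equals $0$ with overwhelming probability, and when it is nonzero it is typically equal to $1$, so the maximum positive deviation is essentially $\frac{1}{n}$ times the indicator that at least one coordinate is nonzero, an event whose probability is roughly $\min(1, n\sum_{j\geq 1}p(j))$.

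For the upper bound, I will start from $[\hat p_n(j) - p(j)]_+ \leq \hat p_n(j)$, which gives $\Delta_n^+(\p) \leq \Ebb \sup_{j\geq 1} \hat p_n(j)$, and then bound $\Ebb \sup_{j\geq 1} \hat p_n(j)$ in two complementary ways. Since each $\hat p_n(j)$ is nonnegative, one trivially has $\sup_{j\geq 1} \hat p_n(j) \leq \sum_{j\geq 1} \hat p_n(j)$, hence $\Ebb \sup_{j\geq 1} \hat p_n(j) \leq \sum_{j\geq 1} p(j)$. Independently, tail summation gives $\Ebb \sup_{j\geq 1} Y_j = \sum_{k\geq 1}\Pbb(\sup_{j\geq 1} Y_j \geq k)$, where the $k=1$ term is bounded trivially by $1$, and for $k \geq 2$ a union bound combined with $\Pbb(Y_j \geq k) \leq \binom{n}{k} p(j)^k \leq (np(j))^k/k! \leq 1/(k!(2j)^k)$ yields $\Pbb(\sup_{j\geq 1} Y_j \geq k) \leq \zeta(k)/(k! 2^k)$, which is summable in $k$. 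This produces $\Ebb \sup_{j\geq 1} \hat p_n(j) \lesssim \frac{1}{n}$, and combining the two bounds gives $\Delta_n^+(\p) \lesssim \frac{1}{n} \land \sum_{j\geq 1} p(j)$.

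For the lower bound, I will analyze the event $E := \{\sup_{j\geq 1} \hat p_n(j) > 0\}$. On $E$ some sample has some coordinate equal to $1$, so $\sup_{j\geq 1} \hat p_n(j) \geq \frac{1}{n}$, and combined with $p(1) \leq \frac{1}{2n}$ this forces $\sup_{j\geq 1}[\hat p_n(j) - p(j)]_+ \geq \frac{1}{2n}$ on $E$. By independence across coordinates and samples, $\Pbb(E) = 1 - \prod_{j\geq 1}(1-p(j))^n \geq 1 - \exp\bigl(-n\sum_{j\geq 1}p(j)\bigr)$, and the elementary inequality $1 - e^{-x} \geq \frac{1}{2}(x \land 1)$ gives $\Pbb(E) \gtrsim 1 \land n\sum_{j\geq 1}p(j)$. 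Hence $\Delta_n^+(\p) \geq \Pbb(E)/(2n) \gtrsim \frac{1}{n} \land \sum_{j\geq 1} p(j)$, which matches the upper bound.

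The slightly delicate step is the upper bound argument for $\Ebb \sup_{j\geq 1}\hat p_n(j) \lesssim \frac{1}{n}$: neither of the two bounds alone gives the desired minimum form, and in the tail summation the $k=1$ term must be handled separately because the naive sum $\sum_{j\geq 1} 1/(2j)$ diverges, while the higher-order terms are tame thanks to the factorial decay $1/(k!2^k)$. Once the two complementary bounds are combined and the lower bound is obtained from the explicit product formula for $\Pbb(E)$, the proof should close without further difficulty.
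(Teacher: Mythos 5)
Your proof is correct, and the overall strategy mirrors the paper's: bound $\Delta_n^+$ above by $\Ebb\sup_j \hat p_n(j)$ using Poisson-type tail bounds for binomials with $np(j)\leq\frac{1}{2j}$ together with a union bound, and obtain the lower bound from the event that at least one coordinate is nonzero, via the product formula $\Pbb(E)=1-\prod_j(1-p(j))^n$. The only genuine divergence is in how the $\min$ shape of the upper bound is produced. You prove two separate inequalities, $\Delta_n^+\leq\sum_j p(j)$ (from $\sup\leq\sum$ and linearity) and $\Delta_n^+\lesssim\frac{1}{n}$ (from tail summation over $k\geq 1$, handling $k=1$ trivially), and then take the minimum. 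The paper instead decomposes $\Ebb\hat P_n$ as $\frac{1}{n}\Pbb(\hat P_n\geq\frac{1}{n})+\Ebb[\hat P_n\1_{\hat P_n\geq\frac{2}{n}}]$ and controls $\Pbb(\hat P_n\geq\frac{1}{n})\leq (n\sum_j p(j))\land 1$ using the integrality of $\sum_j n\hat p_n(j)$ and linearity of expectation; the second term is small because $U(\p)=\sup_j njp(j)\leq\frac{1}{2}$. Your variant avoids the integrality observation and is arguably a touch more elementary, while the paper's decomposition gives the $V\land 1$ dependence in a single pass; both are equally valid.
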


\begin{proof}
    We first give some simple bounds on binomial tails for $q\leq \frac{1}{2n}$. We write $\hat q_n=\frac{Y}{n}$ for $Y\sim\Bcal(n,q)$. For any $k\geq 1$,
    \begin{align*}
        \Pbb\paren{\hat q_n \geq \frac{k}{n}} =\sum_{l=k}^n\binom{n}{l}q^l(1-q)^{n-l} \leq \sum_{l=k}^n \frac{(nq)^l}{l!} \leq 2\frac{(nq)^k}{k!}.
    \end{align*}
    Now let $p\in[0,\frac{1}{2}]^{\Nbb}_{\downarrow 0}$ such that for all $j\geq 1$, $p(j)\leq \frac{1}{2nj}$. We define $U(\p) = \sup_{j\geq 1} njp(j)\leq \frac{1}{2}$. Letting $\hat P_n:= \sup_{j\geq 1}\hat p_n(j)$, for any $k\geq 2$, the union bound implies
    \begin{equation*}
        \Pbb\paren{\hat P_n\geq \frac{k}{n}}\leq \frac{2U(\p)^k}{k!}\sum_{j\geq 1}\frac{1}{j^k}\leq \frac{\pi^2}{3}\frac{U(\p)^k}{k!}.
    \end{equation*}
    Hence,
    \begin{equation*}
        \Ebb\sqb{\hat P_n\1_{\hat P_n\geq \frac{2}{n}}} \leq \sum_{k\geq 2}\frac{k}{n}\Pbb\paren{\hat P_n\geq \frac{k}{n}} \leq \frac{2\pi^2}{3}\frac{U(\p)^2}{n}\leq \frac{\pi^2 U(\p)}{3n}.
    \end{equation*}
    Now let $V(\p) = \sum_{j\geq 1}np(j)$. Note that for any $j\geq 1$, $V(\p)\geq \sum_{i\leq j}np(j)\geq njp(j)$, so that $U(\p)\leq V(\p)\land 1$. By linearity of the expectation, one has
    \begin{equation*}
        \Ebb\sqb{\sum_{j\geq 1}n\hat p_n(j)} = V(\p).
    \end{equation*}
    In particular, since this sum takes integer values and is nonzero whenever $\hat P_n\geq \frac{1}{n}$, we obtain $\Pbb(\hat P_n\geq \frac{1}{n})\leq V(\p)\land 1$. We now show that $\Pbb(\hat P_n\geq \frac{1}{n})\gtrsim V(\p)\land 1$. We have
    \begin{equation*}
        \Pbb\paren{\hat P_n\geq \frac{1}{n}} = 1-\prod_{j\geq 1}(1-p(j))^n
        \geq  1-e^{-V(\p)} \geq c_4 V(\p)\land 1,
    \end{equation*}
    for some universal constant $c_4>0$. Recall that for all $j\geq 1$, one has $p(j)\leq \frac{1}{2n}$. Therefore, whenever $\hat P_n \geq \frac{1}{n}$, we have $\sup_{j\geq 1}\hat p_n(j) - p(j) \geq \frac{1}{2n}$. The previous bound then shows that
    \begin{equation*}
        \Delta_n^+(\p) \geq \frac{c_4}{2} \frac{V(\p)\land 1}{n}.
    \end{equation*}
    On the other hand,
    \begin{align*}
        \Delta_n^+(\p) \leq \Ebb[\hat P_n] &\leq \frac{1}{n} \Pbb\paren{\hat P_n\geq \frac{1}{n}} + \Ebb\sqb{\hat P_n\1_{\hat P_n\geq \frac{2}{n}}}\\
        &\leq \frac{V(\p)\land 1}{n} + \frac{\pi^2 U(\p)}{3n}\\
        &\leq 5\frac{V(\p)\land 1}{n}.
    \end{align*}
    where in the last inequality we used $U(\p)\leq V(\p)\land 1$. This ends the proof of the proposition.
\end{proof}

Using the previous results, we are now ready to prove the complete behavior of $\Delta_n(\p)$.

\vspace{2mm}

\begin{proof}{\textbf{of \cref{thm:main_result}}}
    Propositions \ref{prop:large_p} and \ref{prop:small_p} provide the complete behavior of $\Delta_n^+(\p)$. It remains to show that this is the leading term in the decomposition $\Delta_n(\p)\asymp \Delta_n^+(\p) + \Delta_n^-(\p)$. We first consider the case when $p(j)\leq \frac{1}{2nj}$ for all $j\geq 1$. In that case, we have directly
    \begin{equation*}
        \Delta_n^-(\p) \leq \sup_{j\geq 1} p(j) \leq \frac{1}{n}\land \sum_{j\geq 1}p(j).
    \end{equation*}
    Now suppose that $p(j)\geq \frac{1}{2nj}$ for $j\geq 1$. By construction of $\phi_{J,q}(n)$, one has for all $J\geq 1$ and $q\in(0,\frac{1}{2}]$,
    \begin{equation*}
        \phi_{J,q}(n) \geq 1\land \sqrt{\frac{q\ln(J+1)}{n}},
    \end{equation*}
    i.e. intuitively the second regime is larger than the third. As a result,
    \begin{equation*}
        \sup_{j\geq 1}\phi_{j,p(j)}(n) \geq 1\land \sup_{j\geq 1}\sqrt{\frac{p(j)\ln(j+1)}{n}} = 1\land \sqrt{\frac{S(\p)}{n}}.
    \end{equation*}
    Next, we clearly have $\Delta_n^-(\p)\leq 1$. Also, in the proof of \citet[Theorem 3]{cohen2022local}, the authors show that
    \begin{equation*}
        \Delta_n^-(\p) \leq \sqrt{\frac{S(\p)}{n}}.
    \end{equation*}
    Hence, we finally obtain
    \begin{equation*}
        \Delta_n^+(\p)\asymp\sup_{j\geq 1}\phi_{j,p(j)}(n) \geq 1\land \sqrt{\frac{S(\p)}{n}} \geq \Delta_n^-(\p).
    \end{equation*}
    This ends the proof that $\Delta_n(\p)\asymp\Delta_n^+(\p)$, which implies the desired result.    
\end{proof}

\section{Proof of the localized Dvoretzky-Kiefer-Wolfowitz results}
\label{sec:proof_local_dkw}

We first prove our local DKW result in \cref{thm:local_dkw}. Before considering the case of general distributions and intervals, we focus on the simpler case of the uniform distribution and consider intervals of the form $[q/2,q]$.
\begin{lemma}\label{lemma:uniform_interval}
    Let $X_1,\ldots,X_n\overset{i.i.d.}{\sim}\Ucal([0,1])$ and $F_m$ be the empirical CDF. Let $q\in(0,\frac{1}{2}]$. Then, for any $t>0$,
    \begin{equation*}
        \Pbb\paren{\sup_{x\in[\frac{q}{2},q]} |F_n(x) - F(x)| > t \sqrt{\frac{q}{n}} } \leq c_1 e^{-c_2 \min(t^2,t\sqrt{nq})},
    \end{equation*}
    for some universal constants $c_1,c_2>0$.
\end{lemma}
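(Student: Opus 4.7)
The approach is to apply a Poissonization trick that reduces the problem to a direct application of the classical Dvoretzky--Kiefer--Wolfowitz theorem (\cref{thm:dkw}) combined with Bernstein's inequality. Let $N = |\{i : X_i \leq q\}| \sim \mathrm{Bin}(n, q)$. Conditional on $N = m$, the $m$ sample points falling in $[0, q]$ are independent and uniformly distributed on $[0, q]$. Writing $y = x/q \in [1/2, 1]$, we obtain $F_n(x) = (m/n) \tilde F_m(y)$, where $\tilde F_m$ denotes the empirical CDF of $m$ i.i.d.\ $U([0,1])$ samples. This yields the decomposition
\begin{equation*}
F_n(x) - F(x) = \frac{m}{n}\left(\tilde F_m(y) - y\right) + y\left(\frac{m}{n} - q\right),
\end{equation*}
which, since $y \in [1/2, 1]$, gives the deterministic estimate
\begin{equation*}
\sup_{x \in [q/2, q]} |F_n(x) - F(x)| \leq \frac{m}{n}\sup_{y \in [0,1]} |\tilde F_m(y) - y| + \left|\frac{m}{n} - q\right|.
\end{equation*}

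Set $\epsilon = t\sqrt{q/n}$ and split on the magnitude of $\epsilon$ relative to $q$. In the moderate regime $\epsilon \leq 6q$, I union-bound the two terms above: the binomial deviation $|m/n - q|$ has variance at most $q$ and summands bounded by $1$, so Bernstein's inequality gives a tail of order $\exp(-c\min(t^2, t\sqrt{nq}))$. For the DKW term, I further split on whether $m \leq 2nq$. On this likely branch, conditioning on $N = m$ and applying \cref{thm:dkw} to the uniform empirical CDF $\tilde F_m$ on $[0,1]$ yields $\Pbb(\sup_y|\tilde F_m - y| > n\epsilon/(2m) \mid N = m) \leq 2\exp(-n^2\epsilon^2/(2m)) \leq 2\exp(-t^2/4)$. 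The complementary event $\{m > 2nq\}$ is controlled by another Bernstein tail on the binomial $m$. In the deep regime $\epsilon > 6q$, I use instead the crude monotonicity bound $\sup_{x \in [q/2,q]}|F_n(x) - F(x)| \leq \max(F_n(q), q) = \max(m/n, q)$; exceeding $\epsilon > q$ forces $m > n\epsilon$, whose Bernstein tail is $\exp(-cn\epsilon) = \exp(-c t\sqrt{nq})$, exactly matching the advertised bound.

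The main delicate step is checking that the auxiliary tail $\Pbb(m > 2nq) \lesssim \exp(-cnq)$ used in the moderate regime is dominated by $\exp(-c'\min(t^2, t\sqrt{nq}))$ uniformly over $t \in (0, 6\sqrt{nq}]$: this follows from the elementary observation that $nq \gtrsim \min(t^2, t\sqrt{nq})$ in that range, after adjusting constants. Combining the contributions from both regimes gives the stated inequality with universal constants $c_1, c_2 > 0$. The proof of the general theorem (\cref{thm:local_dkw}) on arbitrary intervals $[x_0, x_1]$ then follows from this lemma by the quantile transform to the uniform distribution together with a dyadic cover of $[F(x_0), F(x_1)]$ by intervals of the form $[q/2, q]$.
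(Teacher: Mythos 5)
Your proof is correct but takes a genuinely different route from the paper. Your key idea is to condition on $N = |\{i : X_i \leq q\}|$: conditional on $N=m$, the $m$ points in $[0,q]$ are i.i.d.\ uniform on $[0,q]$, so the classical DKW theorem (\cref{thm:dkw}) can be invoked as a black box for $\tilde F_m$, while the remaining fluctuation is just that of the binomial count $m$, controlled by Bernstein. The decomposition
\begin{equation*}
F_n(x) - F(x) = \frac{m}{n}\bigl(\tilde F_m(x/q) - x/q\bigr) + \frac{x}{q}\Bigl(\frac{m}{n} - q\Bigr)
\end{equation*}
is exact, the case splits ($\epsilon \leq 6q$ with a further split on $\{m \leq 2nq\}$, versus $\epsilon > 6q$ via the crude bound $\sup_{x\in[q/2,q]}|F_n(x)-F(x)| \leq \max(m/n, q)$) are all sound, and your careful check that the auxiliary tail $\Pbb(m > 2nq) \leq e^{-cnq}$ is dominated by $e^{-c'\min(t^2,t\sqrt{nq})}$ on the relevant range is precisely the right thing to verify. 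In contrast, the paper's proof is a hand-rolled chaining argument: it dyadically partitions $[q/2,q]$, applies Bernstein to the empirical count in each dyadic subinterval $I_{u,v}$ with level-dependent thresholds $t 2^{-u/3}\sqrt{q/n}$, union-bounds the geometrically-decaying tails over all levels and positions, and reconstructs $F_n(x_v)-x_v$ via a binary expansion of $v$. Your approach is more modular (it reuses DKW rather than re-proving a chaining bound) and makes transparent that the $e^{-t^2}$ regime comes from the conditional DKW fluctuation while the $e^{-t\sqrt{nq}}$ regime is a pure binomial large-deviation effect on the count $m$; the paper's approach is more self-contained and does not rely on the conditional-uniformity fact, which is why it later generalizes (in the proof of \cref{thm:local_dkw}) to arbitrary CDFs via the quantile transform exactly as you describe.
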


\begin{proof}
    For the proof, we apply Bernstein inequalities to the number of points falling in intervals within $[\frac{q}{2},q]$. We first treat the simple case when $t\geq\sqrt{nq}$. In that case, Bernstein's inequality shows that
    \begin{equation*}
        \Pbb\paren{F_n(q)-q \geq t\sqrt{\frac{q}{n}}}\leq \exp\paren{-\frac{\frac{1}{2}t^2nq}{nq(1-q)+\frac{t\sqrt{nq}}{3}} } \leq   e^{-t\sqrt{nq}/4}.
    \end{equation*}
    Suppose that the complementary event is met, then for any $x\in[0,q]$, we have $0\leq F_n(x) \leq F_n(q) \leq q + t\sqrt{\frac{q}{n}} \leq 2 t\sqrt{\frac{q}{n}}$. In particular, $|F_n(x) - x| \leq 2t\sqrt{\frac{q}{n}}$. Hence,
    \begin{equation*}
        \Pbb\paren{\sup_{x\leq q}|F_n(x)-x| \geq 2t\sqrt{\frac{q}{n}}} \leq e^{-t\sqrt{nq}/4}.
    \end{equation*}
    This shows that for any $t\geq 2\sqrt{nq}$,
    \begin{equation*}
        \Pbb\paren{\sup_{x\leq q}|F_n(x)-x| \geq t\sqrt{\frac{q}{n}}} \leq e^{-t\sqrt{nq}/8}.
    \end{equation*}

    In the rest of the proof, we suppose that $t\leq \frac{3}{2}\sqrt{nq}$ which implies $3n\frac{q}{2} \geq t\sqrt{nq}$. We recall that if $Z_1,\ldots,Z_n\overset{i.i.d.}{\sim}\Bcal(r)$ are independent Bernoulli variables with $r\in(0,\frac{1}{2}]$, the Bernstein's inequality yields for $\delta\leq 3nr$,
    \begin{equation*}
        \Pbb\paren{\left| \sum_{i=1}^n Z_i - nr\right| \geq \delta} \leq 2\exp\paren{-\frac{\frac{1}{2}\delta^2}{nr(1-r)+\frac{\delta}{3}} } \leq 2\exp\paren{-\frac{\delta^2}{4nr}}.
    \end{equation*}
    Now consider any $u\geq 1$ such that $\frac{3nq}{2^u} \geq t\sqrt{nq}$, and any $v\in \{0,\ldots,2^{u-1}-1\}$. We apply the previous inequality to the points $X_1,\ldots,X_n$ falling in the interval $I_{u,v}:=(\frac{q}{2} + q\frac{v}{2^u},\frac{q}{2} + q\frac{v+1}{2^u}]$. We obtain
    \begin{equation*}
        \Pbb\paren{\left| \frac{1}{n}\sum_{i=1}^n \1(Y_i\in I_{u,v}) - \frac{q}{2^u}\right| \geq \frac{t}{2^{u/3}}\sqrt{\frac{q}{n}}} \leq 2e^{-t^2 2^{u/3-2}}.
    \end{equation*}
    Last, using the same inequality, given that $3n\frac{q}{2} \geq t\sqrt{nq}$, we have that
    \begin{equation*}
        \Pbb\paren{\left|\frac{1}{n}\sum_{i=1}^n\1\paren{Y_i \leq \frac{q}{2}} - \frac{q}{2}\right| \geq t\sqrt{\frac{q}{n}}} \leq 2e^{-t^2/2}.
    \end{equation*}
    We denote by $E_t$ the intersection of the complementary events described above. By the union bound, we have
    \begin{equation*}
        1-\Pbb(E_t) \leq 2e^{-t^2/2}+  \sum_{u\geq 1}2^{u-1} \cdot 2e^{-t^2 2^{u/3-2}}\leq c_1 e^{-c_2 t^2}
    \end{equation*}
    for some universal constants $c_1,c_2>0$. We now suppose that this event is met and aim to prove an upper bound for $|F_n(x)-F(x)|$ for an arbitrary $x\in[\frac{q}{2},q)$. To do so, we first focus on the points of the form $x_v=\frac{q}{2} + \frac{v}{2^{u_0}}$ where $u_0\geq 1$ is the largest integer for which $\frac{3nq}{2^{u_0}} \geq t\sqrt{nq}$. In particular, we have $\frac{q}{2^{u_0}} \leq \frac{2}{3} t\sqrt{\frac{q}{n}}$. We decompose $v$ in binary encoding via $v = \sum_{u\leq u_0}a_{u}2^{u_0-u}$ where $a_{u}\in\{0,1\}$ for $u\in[u_0]$.  Writing $v_u = \sum_{u'\leq u}a_{u'}2^{u-u'}$, we can write
    \begin{equation*}
        nF_n(x_v)=\sum_{i=1}^n \1\paren{Y_i\leq x_v} = \sum_{i=1}^n \1\paren{Y_i\leq \frac{q}{2}} + \sum_{u=1}^{u_0}\sum_{i=1}^n \1(Y_i\in I_{u,v_u}).
    \end{equation*}
    As a result, on $E$, we have for any $v\in\{0,\ldots,2^{u_0}\}$,
    \begin{equation*}
        |F_n(x_v) - x_v| \leq t\sqrt{\frac{q}{n}} + \sum_{u=1}^{u_0} a_u \frac{t}{2^{u/3}}\sqrt{\frac{q}{n}} \leq \frac{t}{1-2^{-1/3}} \sqrt{\frac{q}{n}}.
    \end{equation*}
    Last, let $x\in(\frac{q}{2},q]$. There exists $v\in \{0,\ldots, 2^{u_0}-1\}$ such that $\frac{v}{2^{u_0}} < x \leq \frac{v+1}{2^{u_0}} $. We note that
    \begin{align*}
        |F_n(x) - x| &\leq \max\paren{\left|F_n\paren{\frac{v}{2^{u_0}}} - \frac{v}{2^{u_0}}\right|, \left|F_n\paren{\frac{v+1}{2^{u_0}}} - \frac{v+1}{2^{u_0}}\right|} + \frac{1}{2^{u_0}}\\
        &\leq \paren{\frac{1}{1-2^{-1/3}} + \frac{2}{3}} t \sqrt{\frac{q}{n}}\\
        &\leq 6t \sqrt{\frac{q}{n}}.
    \end{align*}
    Hence, on $E$, we showed that
    \begin{equation*}
        \sup_{x\in[\frac{q}{2},q]}|F_n(x) - x| \leq 6t \sqrt{\frac{q}{n}}.
    \end{equation*}
    Hence, we showed that for any $t\leq \frac{9}{2}\sqrt{nq}$, one has
    \begin{equation*}
        \Pbb\paren{\sup_{x\in[\frac{q}{2},q]}|F_n(x) - x| > t \sqrt{\frac{q}{n}}} \leq c_1 e^{-c_2 t^2},
    \end{equation*}
    for some universal constants $c_1,c_2>0$.
\end{proof}
We are now ready to prove the local DKW bound for intervals of the form $(-\infty,x_0]$.

\vspace{2mm}

\begin{proof}{\textbf{of Corollary \ref{cor:local_dkw}}}
    First, note that if $F(x_0)\geq \frac{1}{2}$, then we can use the classical DKW Theorem \ref{thm:dkw} to obtain the desired bound. We will therefore suppose without loss of generality that $F(x_0)\leq \frac{1}{2}$. We first prove the result for the uniform distribution. Fix $q\in [0,\frac{1}{2}]$. For convenience, let $\delta = t\sqrt{\frac{q}{n}}$. We first suppose that $q\leq \delta$. Then, Lemma \ref{lemma:uniform_interval} implies in particular that
    \begin{equation*}
        \Pbb\paren{ |F_n(q) -q|\leq \delta} \leq c_1 e^{-c_2\min(t^2,t\sqrt{nq})} = c_1 e^{-c_2 t\sqrt{nq}}.
    \end{equation*}
    Note that on the event $|F_n(q)-q|\leq \delta$, we have in particular for all $x\leq q$ that $F_n(x) \leq F_n(q) \leq q+\delta\leq 2\delta$. As a result, for all $x\leq q$, $|F_n(x) -x| \leq 2\delta$. This yields
    \begin{equation*}
        \Pbb\paren{ \sup_{x\leq q}|F_n(x) -x|\leq 2\delta} \leq c_1 e^{-c_2\min(t^2,t\sqrt{nq})} = c_1 e^{-c_2 t\sqrt{nq}}.
    \end{equation*}
    
    We now consider the case when $q\geq \delta$. Similarly as above, if $|F_n(\delta) - \delta| \leq \delta$, then for any $x\leq \delta$, we have $\sup_{x\leq \delta} |F_n(x) -x| \leq 2\delta$. As a result, we can focus on the interval $[\delta,q]$. We decompose the supremum on intervals of the form $[\frac{q}{2^{u+1}},\frac{q}{2^u}]$ for $u\geq 0$. From the above arguments, it suffices to consider intervals $[\frac{q}{2^{u+1}},\frac{q}{2^u}]$ for $u\leq u_0$ such that $\frac{q}{2^{u_0+1}} \leq t\sqrt{\frac{q}{n}} \leq \frac{q}{2^{u_0}}$. We note that for $0\leq u\leq u_0$, one has $2^u t \leq \sqrt{nq}$, so that $\min(2^u t^2,t\sqrt{nq})  = 2^u t^2$. Hence, by Lemma \ref{lemma:uniform_interval},
    \begin{align*}
        \Pbb\paren{\sup_{x\in[0,q]}|F_n(x) - x| > 2\delta} &\leq \Pbb\paren{\sup_{x\in[\delta,q]}|F_n(x) - x| > \delta} \\
        &\leq \sum_{u=0}^{u_0} \Pbb\paren{\sup_{x\in[\frac{q}{2^{u+1}},\frac{q}{2^u}]}|F_n(x) - x| > 2^{u/2} t \sqrt{\frac{q}{2^u n}}}\\
        &\leq c_1 \sum_{u=0}^{u_0} e^{-c_2 \min(2^u t^2,t\sqrt{nq})}\\
        &= c_1 \sum_{u=0}^{u_0} e^{-c_2 2^u t^2} \leq c_3 e^{-c_4 t^2},
    \end{align*}
    for some universal constants $c_3,c_4>0$. This shows that for some constants $c_3,c_4>0$, we have
    \begin{equation*}
        \Pbb\paren{\sup_{x\in[0,q]}|F_n(x) - x| > 2t\sqrt{\frac{q}{n}}} \leq c_3 e^{-c_4 \min(t^2,t\sqrt{nq})}.
    \end{equation*}
    
    Changing the constants appropriately ends the proof of the theorem for the uniform distribution. The result extends directly to general distributions via a change of variables. Consider a real-valued distribution $\mu$ with CDF $F_X$. If $U\sim\Ucal([0,1])$ is uniform, then $X = F^{-1}(U)\sim \mu$, where we define $F^{-1}(u) = \inf\{x: F(x) \geq u\}$. Because the CDF $F$ is right-continuous, we have in particular $F(F^{-1}(u)) \geq u$. Hence, $F(x) \geq u$ i.if $x \geq F^{-1}(u)$. Given $n$ samples $U_1,\ldots,U_n\overset{i.i.d.}{\sim}\Ucal([0,1])$, we denote by $F_{n,U}$ their empirical CDF. Similarly, letting $X_i = F^{-1}(U_i)$ for $i\in [n]$, we denote by $F_{n,X}$ their empirical CDF. Now note that for any $x\in \Rbb$,
    \begin{equation*}
        F_{n,X}(x)-F(x) = \frac{1}{n}\sum_{i=1}^n \1(F^{-1}(U_i) \leq x) - F(x) = \frac{1}{n}\sum_{i=1}^n \1(U_i \leq F(x)) - F(x).
    \end{equation*}
    As a result, we have
    \begin{equation*}
        \Pbb\paren{\sup_{x\leq x_0}|F_{n,X}(x) - F(x)| > t\sqrt{\frac{F(x_0)}{n}}} \leq \Pbb\paren{\sup_{x\in[0,F(x_0)]}|F_{n,U}(x) - x| > t\sqrt{\frac{F(x_0)}{n}}}.
    \end{equation*}
    This ends the proof of the theorem.
\end{proof}

Last, we now prove the main localized result for intervals $[x_0,x_1]$.

\vspace{2mm}

\begin{proof}{\textbf{of \cref{thm:local_dkw}}}
    If $F(x_0)\leq \frac{1}{2}\leq F(x_1)$, the standard DWK \cref{thm:dkw} gives the desired result. Otherwise, without loss of generality, we suppose that $F(x_1)\leq \frac{1}{2}$. Then, $F(x_1)\geq V =\max_{x\in[x_0,x_1]}F(x)(1-F(x)) \geq \frac{F(x_1)}{2}$. Hence, using Corollary \ref{cor:local_dkw},
    \begin{multline*}
        \Pbb\paren{\sup_{x\in[x_0,x_1]}|F_n(x) - F(x)| >t\sqrt{\frac{V}{n}}} \leq \Pbb\paren{\sup_{x\leq x_1}|F_n(x) - F(x)| >t\sqrt{\frac{F(x_1)}{2n}}} \\
        \leq c_1 e^{-\frac{c_2}{4} \min(t^2,2t\sqrt{nF(x_1)})} \leq c_1 e^{-\frac{c_2}{4} \min(t^2,2t\sqrt{nV})}.
    \end{multline*}
    This ends the proof.
\end{proof}

\section{Proofs of the results on the expected maximum deviation of distributions on $[0,1]$}
\label{sec:proof_continuous01}

Before proving the main result Corollary \ref{cor:continuous[0,1]}, we recall Bennett's inequality.

\begin{lemma}[Bennett's inequality]
\label{lemma:bennett}
    Let $X_1, X_2, \dots, X_n$ be i.i.d.\ random variables with mean $\mu$, variance $\sigma^2$ and $0 \leq X_i \leq 1$ almost surely. Then for any $t>0$ we have

    \begin{equation*}
        \Pbb\paren{\frac{1}{n}\sum_{i=1}^n (X_i-\mu) \geq t}\leq e^{-n \sigma^2\h\left(\frac{t}{\sigma^2}\right)},
    \end{equation*}
    and 
        \begin{equation*}
        \Pbb\paren{\frac{1}{n}\sum_{i=1}^n (X_i-\mu) \geq t}\leq e^{-n \mu\h\left(\frac{t}{\mu}\right)},
    \end{equation*}
    where $\h(u) = (1+u)\ln(1+u)-u$.
\end{lemma}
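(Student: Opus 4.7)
The plan is to apply the classical Cram\'er--Chernoff exponential moment method to the centered variables $Y_i := X_i - \mu$. These are i.i.d., have zero mean, and are contained in $[-\mu, 1-\mu] \subseteq [-1,1]$. Writing
\begin{equation*}
    \Pbb\paren{\frac{1}{n}\sum_{i=1}^n Y_i \geq t} \leq \inf_{\lambda > 0}\exp\paren{-n\sqb{\lambda t - \ln \Ebb e^{\lambda Y_1}}},
\end{equation*}
the task reduces to producing two log-MGF upper bounds of the same form $\ln \Ebb[e^{\lambda Y_1}] \leq v(e^\lambda - 1 - \lambda)$, one with $v = \sigma^2$ and one with $v = \mu$; optimizing the resulting exponent in $\lambda$ will then produce the desired rate function $v\,\h(t/v)$.

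For the variance-based MGF bound, I would expand $e^{\lambda y}$ as a power series in $\lambda$ and compare it term-by-term to $1 + \lambda y + y^2(e^\lambda - 1 - \lambda)$. For each $k \geq 2$, this reduces to the inequality $y^k \leq y^2$ on $[-1,1]$: immediate for even $k$ since $|y|\leq 1$, and for odd $k$ handled by splitting on the sign of $y$ (when $y\geq 0$ it reduces again to $|y|\leq 1$, and when $y<0$ we have $y^k \leq 0 \leq y^2$). Taking expectations, using $\Ebb Y_1 = 0$ and $\Ebb Y_1^2 = \sigma^2$, and finally $1+u \leq e^u$, yields the first MGF bound.

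For the mean-based MGF bound, I would instead bound $\Ebb[e^{\lambda X_1}]$ directly before centering. Since $x \mapsto e^{\lambda x}$ is convex on $[0,1]$, linear interpolation between the two endpoints gives $e^{\lambda x} \leq 1 + (e^\lambda - 1)x$, whence $\Ebb[e^{\lambda X_1}] \leq 1 + (e^\lambda - 1)\mu \leq \exp(\mu(e^\lambda - 1))$. Multiplying by $e^{-\lambda \mu}$ converts this into the desired bound $\ln \Ebb[e^{\lambda Y_1}] \leq \mu(e^\lambda - 1 - \lambda)$, with $\mu$ now playing the role previously played by $\sigma^2$.

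In both cases, the exponent $\lambda t - v(e^\lambda - 1 - \lambda)$ is concave in $\lambda$ and maximized at $\lambda^\star = \ln(1 + t/v)$; substituting back collapses the expression to $(t+v)\ln(1+t/v) - t = v\,\h(t/v)$, delivering both inequalities of the lemma. I do not anticipate a serious obstacle here, since this is a textbook Chernoff argument; the only mildly delicate step is the parity case-split in the first MGF bound, which is necessary because the usual one-line inequality $y^k \leq y^2$ for $y \in [0,1]$ has to be adapted to signed values in $[-1,1]$.
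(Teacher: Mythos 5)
Your proof is correct, but it takes a genuinely different and more self-contained route than the paper. The paper simply cites \citet[Theorem 2.9]{boucheron2013concentration} for the variance-based inequality, then deduces the mean-based one for free by observing that $\sigma^2 = \mu - \Ebb[X^2] \leq \mu - \mu^2 \leq \mu$ (using $X(1-X)\geq 0$) together with the monotonicity of $x\mapsto x\h(t/x)$ on $(0,\infty)$, so the second bound is simply a weakening of the first. Your proof instead derives both bounds from scratch via the exponential moment method. Your derivation of the variance-based MGF bound is a correct variant of the textbook argument: you exploit the two-sided containment $Y_i\in[-1,1]$ via the term-by-term inequality $y^k\leq y^2$ (the classical Bennett proof uses only the one-sided bound $Y_i\leq b$ together with monotonicity of $y\mapsto(e^{\lambda y}-1-\lambda y)/y^2$, which is what makes it apply more generally; here the two-sided bound holds, so your shortcut is fine). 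Your second bound is a standard Poisson/multiplicative-Chernoff derivation from convexity, $e^{\lambda x}\leq 1+(e^\lambda-1)x$ on $[0,1]$, and it is a clean direct proof. What your approach buys is self-containment and transparency; what the paper's approach buys is brevity, and the observation that $\sigma^2\leq\mu$ makes the logical dependency between the two inequalities explicit (the second really is a weakened form of the first), which is arguably more illuminating for a reader skimming the appendix.
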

\begin{proof}
    For the first part, see~\cite[Theorem 2.9]{boucheron2013concentration}. For the second, note that 
    \[
    0 \leq \Ebb[X(1-X)] =\Ebb[X] -\Ebb[X^2] = \mu - \sigma^2 +\mu^2.
    \]
    Hence, $\sigma^2 \leq \mu - \mu^2 \leq \mu$ and $f(x)= x\h\left( \frac t x \right)$ is decreasing on $x \in [0,\infty)$ since $f'(x) = \ln(1+\frac{t}{x}) - \frac t x  \leq 0 $. Thus, the second statement is weaker than the first one.
\end{proof}

We will use this inequality instead of the Chernoff bound (Lemma~\ref{lemma:chernoff_bound}) that we used in the case of Bernoulli random variables.

\vspace{2mm}

\begin{proof}{\textbf{of Corollary \ref{cor:continuous[0,1]}}}
    As a first step, we show that we can upper bound $\Delta_n^+(\mu)$ similarly as in Proposition~\ref{prop:reduction}, by replacing $p(i)$ with $\sigma^2(i)$. 
    
    As in the proof of Proposition~\ref{prop:reduction}, let $\varepsilon:=\sup_{i\geq 1} \varepsilon_{i,\sigma^2(i)}(n)$ and $\tilde\varepsilon = (\varepsilon\land\frac{1}{4})\lor\frac{1}{n}$. The same proof shows that for any $i\geq 1$,
    \begin{equation*}
        D(\sigma^2(i)+\tilde\varepsilon\parallel \sigma^2(i)) \geq \frac{\ln(2i)}{nC},
    \end{equation*}
    for some universal constant $C>0$. The proof also shows that $\varepsilon\geq \frac{1}{4n}$, hence $\varepsilon\asymp\tilde \varepsilon$. Next, by Lemma~\ref{lemma:compare_kl_h}, we have
    \begin{equation*}
        \sigma^2(i) \h\paren{\frac{2\tilde \varepsilon}{\sigma^2(i)}} \geq D(\sigma^2(i) +\tilde\varepsilon \parallel\sigma^2(i)) \geq \frac{\ln(2i)}{nC}
    \end{equation*}
    where in the last inequality, we used Lemma~\ref{lemma:compare_kl_h}. We now use Bennett's inequality from Lemma~\ref{lemma:bennett} instead together with the convexity of the function $\sigma^2(i)\h(\frac{\cdot}{\sigma^2(i)})$ , to obtain as in the proof of Proposition~\ref{prop:reduction} that for any $k\geq 1$,
    \begin{equation*}
        \Pbb(\hat p_n(i)\geq p(i) + 2kC\varepsilon)\leq e^{-\sigma^2(i)\h\paren{\frac{2kC\varepsilon}{\sigma^2(i)}}} \leq e^{-kC\sigma^2(i)\h\paren{\frac{2\varepsilon}{\sigma^2(i)}}} \leq \frac{1}{(2i)^k}.
    \end{equation*}
    The same union bound argument then shows that $\Delta_n^+(\mu) \leq 3C\varepsilon\lesssim\varepsilon.$

    In summary, this shows that if there exists $j\geq 1$ such that $\sigma^2(j) \geq \frac{1}{2nj}$, then from Proposition~\ref{prop:lower_bound} one has $\varepsilon=\sup_{i\geq 1}\varepsilon_{i,\sigma^2(i)}(n)\geq 0$. As a result, the proof of Proposition \ref{prop:large_p} shows that $\sup_{i\geq 1}\varepsilon_{i,\sigma^2(i)}(n)\asymp \sup_{i\geq 1}\phi_{i,\sigma^2(i)}(n)$, which gives
    \begin{equation*}\label{eq:delta_+}
        \Delta_n^+(\mu)\lesssim \sup_{i\geq 1}\varepsilon_{i,\sigma^2(i)}(n) \asymp 1\land \sup_{j\geq 1} \paren{\sqrt{\frac{\sigma^2(j)\ln(j+1)}{n}} \lor  \frac{\ln(j+1)}{n\ln\paren{2+\frac{\ln(j+1)}{n\sigma^2(j)}}}  }.
    \end{equation*}
    It now remains to bound $\Delta_n^-(\mu)$. This can be done in a completely symmetric manner, by considering the distribution $\tilde\mu$ of $(1-X_i)_{i\geq 1}$ for $X\sim\mu$. We obtain directly
    \begin{equation*}
        \Delta_n^-(\mu) = \Delta_n^+(\tilde\mu) \lesssim \sup_{i\geq 1}\varepsilon_{i,\sigma^2(i)}(n),
    \end{equation*}
    where in the last inequality, we applied Eq~\eqref{eq:delta_+} to $\tilde\mu$. Finally, we showed that if there exists $j\geq 1$ such that $\sigma^2(j) \geq \frac{1}{2nj}$, then
    \begin{equation*}
        \Delta_n(\mu)\lesssim 1\land \sup_{j\geq 1} \paren{\sqrt{\frac{\sigma^2(j)\ln(j+1)}{n}} \lor  \frac{\ln(j+1)}{n\ln\paren{2+\frac{\ln(j+1)}{n\sigma^2(j)}}}  }.
    \end{equation*}

    We now suppose that for all $j\geq 1$, one has $p(j)\leq \frac{1}{2nj}$. Fix $i\geq 1$. Since $\sigma^2(i)\leq \frac{1}{2ni}$, for any $k\geq 1$, one has $\frac{4k}{n} \geq 8\sigma^2(i)$. Then, Bennett's inequality in Lemma~\ref{lemma:bennett} together with a lower bound from Lemma~\ref{lemma:estimates_kl} shows that
    \begin{equation*}
        \Pbb\paren{\hat p_n(i)\geq p(i) + \frac{4k}{n}} \leq e^{-\sigma^2(i)\h\paren{\frac{4k}{n\sigma^2(i)}}} \leq \paren{\frac{n\sigma^2(i)}{4k}}^{\frac{4k}{n\sigma^2(i)}}\leq \frac{1}{(8ki)^{8ki}}.
    \end{equation*}
    As a result, the same computations as in the proof of Proposition~\ref{prop:reduction} show that
    \begin{equation*}
        \Delta_n^+(\mu)\leq \frac{12}{n}.
    \end{equation*}
    As before, the argument is symmetric, hence we obtain $\Delta_n^-(\mu)\leq \frac{12}{n}$ as well. This shows that
    \begin{equation*}
        \Delta_n(\mu)\lesssim\frac{1}{n}.
    \end{equation*}
    Next, for any $i\geq 1$, note that $\text{Var}(\hat p_n(i)) = \frac{\sigma^2(i)}{n}.$ As a result, for any $c>0$, Chebyshev's inequality yields
    \begin{equation*}
        \Pbb(|\hat p_n(i) -p(i)|\geq c) \leq \frac{\sigma^2(i)}{nc^2}.
    \end{equation*}
    Hence, by the union bound,
    \begin{equation*}
        \Pbb(\|\hat p_n -p\|_\infty\geq c)\leq \frac{1}{nc^2}\sum_{i\geq 1}\sigma^2(i).
    \end{equation*}
    Now suppose that $\sum_{i\geq 1}\sigma^2(i)<\infty$. For simplicity, let $\eta = \sqrt{\frac{\sum_{i\geq 1} \sigma^2(i)}{n}}$. Then,
    \begin{align*}
        \Delta_n(\mu) = \Ebb\|\hat p_n-p\|_\infty &\leq \eta   + \sum_{k\geq 1} 2^k\eta  \Pbb\paren{\|\hat p_n -p\|_\infty\geq 2^{k-1}\eta}\\
        &\leq \eta + \eta\sum_{k\geq 1} \frac{\sum_{i\geq 1}\sigma^2(i)}{n 2^{k-2}\eta^2} = 5\eta.
    \end{align*}
    This ends the proof that
    \begin{equation*}
        \Delta_n(\mu)\lesssim\frac{1}{n}\land  \sqrt{\frac{\sum_{i\geq 1} \sigma^2(i)}{n}},
    \end{equation*}
    which ends the proof of the result.
\end{proof}

By \cref{thm:main_result}, we know that the upper bounds from Corollary~\ref{cor:continuous[0,1]} are attained using a sequence of independent Bernoulli random variables---we recall that in this case, since $p(i)\in[0,1/2]$, for $i\geq 1$, one has $\sigma^2(i)\asymp p(i)$---except in the case when $\sum_{j\geq 1}\sigma^2(j)\leq \frac{1}{2n}$. 

In that case, changing the support from $\{0,1\}$ to $\{0,\sqrt{2n\sum_{j\geq 1}\sigma^2(j)}\}$ achieves the desired upper bound. For convenience, define $\eta = \sqrt{2n\sum_{i\geq 1} \sigma^2(i)}$. We consider the sequence of independent variables $X_i = \eta Z_i$ where $(Z_i)_{i\geq 1}$ are independent Bernoulli variables with parameters $\frac{\sigma^2(i)}{\eta^2}$. We denote by $\mu$ this distribution. We have that
\begin{equation*}
    \Pbb\paren{\hat p_n(i) \geq \frac{\eta}{n} } = 1-\prod_{j\geq 1}\paren{1-\frac{\sigma^2(i)}{\eta^2} }^n \geq 1-\exp\paren{-\frac{n}{\eta^2}\sum_{j\geq 1}\sigma^2(i) }  \geq 1-e^{-1/2}.
\end{equation*}
As a result, since $p(i) =\eta\frac{\sigma^2(i)}{\eta^2} \leq \frac{\eta}{2n}$, we obtained that
\begin{equation*}
    \Ebb \|\hat p_n-p\|_\infty \gtrsim \frac{\eta}{n} \asymp \sqrt{\frac{\sum_{i\geq 1}\sigma^2(i)}{n}}.
\end{equation*}

\section{Proofs of the results on expected empirical deviations in $\ell^q$ norms}
\label{sec:lq_norms}

We first prove the convergence characterization from Proposition~\ref{prop:characterization_lq_convergence}.

\vspace{2mm}

\begin{proof}{\textbf{of Proposition~\ref{prop:characterization_lq_convergence}}}
    Suppose that $\|\p\|_1=\infty$. 
    Further let  $p(i)\to 0$ as $i\to\infty$; otherwise the result would be straightforward. 
    Let $(X_i)_{i\geq 1}$ be a sequence of independent Bernoulli random variables such that $X_i\sim\Bcal(p(i))$. Because $\sum_{j\geq 1}p(j)=\infty$, by Borel-Cantelli's lemma, almost surely, there is an infinite number of indices $i\geq 1$ for which $X_i=1$. In particular, with full probability, there is an infinite number of indices $i$ for which $\hat p_n(i)\geq \frac{1}{n}$ and thus infinitely many for which $|\hat p_n(i) - p(i)|\geq \frac{1}{2n}$ since $p(i)\to 0$ as $i\to\infty$. As a result, $\|\hat\p_n-\p\|_q=\infty \;(a.s.)$.

    Now suppose that $\|\p\|_1<\infty$. We note that
    \begin{equation*}
        \Ebb\|\hat \p_n-\p\|_q \leq \Ebb \|\hat \p_n-\p\|_1 \asymp \sum_{j\geq 1} p(j)\land \sqrt{\frac{p(j)}{n}}.
    \end{equation*}
    In particular, for any $\varepsilon>0$, there exists $i\geq 1$ such that $\sum_{j\geq i}p(j)<\varepsilon$. Then, for $n\geq \frac{1}{p(i)}$, we have
    \begin{equation*}
        \Ebb\|\hat \p_n-\p\|_q \lesssim  \frac{1}{\sqrt n}\sum_{j<i}\sqrt{p(j)} + \sum_{j\geq i}p(j) \leq \frac{1}{\sqrt n}\sum_{j<i}\sqrt{p(j)} + \varepsilon.
    \end{equation*}
    Hence, $\limsup_{n\to\infty} \Ebb\|\hat \p_n-\p\|_q\leq \varepsilon$. Because this holds for any $\varepsilon$, this shows that $\Ebb\|\hat \p_n-\p\|_q\to 0$ as $n\to\infty$.
\end{proof}

We now provide bounds on the deviation $\Ebb \|\hat \p_n-\p\|_q$ when $\|\p\|_1<\infty$. To do so, we first need estimates on the central moments of binomials.

\begin{lemma}\label{lemma:central_moments}
    Let $n\geq 1$ and $0\leq p\leq \frac{1}{2}$. Let $Y\sim\Bcal(n,p)$ be a binomial and $q\geq 1$. Then,
    \begin{equation*}
        \Ebb |Y-np|^q \asymp_q \psi_q(n,p):=
        \begin{cases}
            (npq)^{q/2} & p\geq \frac{q}{2n}\\
            \paren{\frac{q}{\ln\frac{q}{np}}}^q & \frac{q}{ne^q} \leq p\leq \frac{q}{2n} \\
            np & p\leq \frac{q}{ne^q}
        \end{cases}
    \end{equation*}
    where the $\asymp_q$ term hides factors $\Omega(c^q)$ and $\mathcal O(C^q)$ for universal constants $c,C>0$.
\end{lemma}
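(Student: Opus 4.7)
The plan is to exploit the tail-integration identity
\begin{equation*}
\Ebb|Y-np|^q = q\int_0^\infty t^{q-1}\Pbb(|Y-np|\geq t)\,dt
\end{equation*}
together with matching upper and lower tail bounds for binomials. The Chernoff bound (Lemma~\ref{lemma:chernoff_bound}) controls $\Pbb(Y/n\geq p+\varepsilon)$ from above by $e^{-nD(p+\varepsilon\parallel p)}$, and Zhang's anti-concentration bound (Lemma~\ref{lemma:anti_concentration}) controls it from below by $c_0 e^{-CnD(p+\varepsilon\parallel p)}$ whenever $p+\varepsilon\geq 1/n$ and $p+\varepsilon\leq(1+p)/2$. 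Combined with symmetric arguments for the left tail, this reduces the problem to solving $nD(p+\varepsilon^\star\parallel p)\asymp q$ in $\varepsilon^\star$, after which the tail integral concentrates around $t^\star=n\varepsilon^\star$ and gives $\Ebb|Y-np|^q\asymp_q(n\varepsilon^\star)^q$, provided the tails decay sufficiently fast beyond the peak.

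To identify $\varepsilon^\star$ in each regime, I would invoke Lemma~\ref{lemma:estimates_kl}: in the sub-Gaussian regime $p\geq q/(2n)$ the quadratic estimate $D(p+\varepsilon\parallel p)\asymp \varepsilon^2/p$ gives $\varepsilon^\star\asymp\sqrt{pq/n}$, hence $(n\varepsilon^\star)^q=(npq)^{q/2}$; in the intermediate regime $q/(ne^q)\leq p\leq q/(2n)$ one checks that $\varepsilon^\star\gg p$, so the logarithmic estimate $D(p+\varepsilon\parallel p)\asymp\varepsilon\ln(\varepsilon/p)$ applies, and solving $n\varepsilon^\star\ln(\varepsilon^\star/p)\asymp q$ yields $n\varepsilon^\star\asymp q/\ln(q/(np))$. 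The Poissonian regime $p\leq q/(ne^q)$ is instead treated by direct computation: $\Pbb(Y=0)=(1-p)^n$ makes $Y$ concentrated on $\{0,1\}$, the $k=1$ term contributes $\Pbb(Y=1)(1-np)^q\asymp np$, which dominates the $k=0$ contribution $(np)^q(1-p)^n\lesssim (np)^q$, and higher-order terms $\sum_{k\geq 2}k^q\Pbb(Y=k)$ are controlled by the standard bound $\Pbb(Y\geq k)\leq(npe/k)^k$, which in this regime gives contributions exponentially smaller than $np$.

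The main obstacle is turning the peak estimate at $\varepsilon^\star$ into a rigorous tail integral estimate: one must show that $\Pbb(|Y-np|\geq kn\varepsilon^\star)$ decays fast enough in $k$ that the integral is truly dominated by $t\approx t^\star$. In the sub-Gaussian regime this is straightforward from Bennett. In the intermediate regime, as in the proof of Proposition~\ref{prop:upper_bound_eps}, I would use the convexity of $\varepsilon\mapsto D(p+\varepsilon\parallel p)$ to bootstrap the bound at $\varepsilon^\star$ into an estimate $nD(p+k\varepsilon^\star\parallel p)\gtrsim kq$, giving tail decay of order $e^{-ckq}$, which is summable against $t^{q-1}$ precisely because the geometric decay dominates any polynomial factor in $k$. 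A secondary technical point is matching the transitions at $p=q/(2n)$ and $p=q/(ne^q)$: at both boundaries the three formulas for $\psi_q$ agree up to factors of the form $c^q, C^q$, which is exactly why the bounds $\Ebb|Y-np|^q\asymp_q\psi_q$ can be stated uniformly with universal constants absorbed into the $q$-th powers.
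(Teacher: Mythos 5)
Your approach is genuinely different from the paper's, and it is essentially sound. The paper works directly with the mass function: it defines $b_l = \binom{n}{np+l}p^{np+l}(1-p)^{n(1-p)-l}|l|^q$, uses ratio tests $b_{l+1}/b_l$ to locate the peak and establish geometric decay away from it, and then estimates the peak value via Stirling's formula. Your route instead integrates the tail $q\int_0^\infty t^{q-1}\Pbb(|Y-np|\geq t)\,dt$, brackets the tail probabilities by the Chernoff bound (Lemma~\ref{lemma:chernoff_bound}) and Zhang's anti-concentration bound (Lemma~\ref{lemma:anti_concentration}), reduces to solving $nD(p+\varepsilon^\star\parallel p)\asymp q$, and bootstraps the decay beyond $\varepsilon^\star$ by convexity of the KL divergence. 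This buys conceptual economy and reuse of the Chernoff/Zhang/KL machinery already developed in the paper; the paper's argument is more computational but more self-contained and never leaves the realm of elementary estimates.

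There is one concrete gap you would need to fill. The lower bound in your scheme relies on Zhang's anti-concentration inequality, which requires $p+\varepsilon^\star\leq(1+p)/2$. In the sub-Gaussian regime $p\geq q/(2n)$ one has $\varepsilon^\star\asymp\sqrt{pq/n}$, and this constraint fails precisely when $q\gtrsim n$ (e.g.\ $q=n$, $p=1/2$ gives $\varepsilon^\star\gtrsim 1$). The paper handles this separately (its case $n\leq 100q$, where $p$ is necessarily bounded below by a constant and one argues directly that $\Ebb|Y-np|^q\asymp_q q^q\asymp_q(npq)^{q/2}$). You would need to add an analogous side argument here; the Chernoff upper bound survives, but the lower bound cannot come from Zhang's lemma in this corner.

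A couple of secondary points to tighten: the claim that $\varepsilon^\star\gg p$ (needed to invoke the logarithmic estimate in Lemma~\ref{lemma:estimates_kl}) is not literally true near the boundary $p\approx q/(2n)$ — there $\varepsilon^\star$ is only of order $p$ — but this is harmless because, as you note, the two formulas for $\psi_q$ match at the boundary up to $C^q$ factors. The left tail is also asymmetric for small $p$ (it is cut off at $-np$), so "symmetric arguments" should really be replaced by the trivial observation $\Ebb[|Y-np|^q\1_{Y<np}]\leq(np)^q$, which is dominated by your right-tail estimate in the intermediate and Poissonian regimes; the paper does exactly this. None of these affect the core correctness of the plan.
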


\begin{proof}
We consider the three different regimes separately. Before doing so, we introduce some notations. For convenience, we will use the extended factorials to define for any $l\in[-np,n(1-p)]$,
\begin{equation*}
    b_l := \binom{n}{np+l} p^{np+l} (1-p)^{n(1-p)-l}  l^q.
\end{equation*}

\paragraph{Regime 1: $\frac{q}{2n}\leq p \leq \frac{1}{2} $.} We aim to understand the sequence $(b_l)_l$ and start with the right tails when $l\geq 0$. Let $l\geq 2\sqrt{npq}$. We can use the convexity inequality $e^x \leq 1+3x$ for $x\in[0,1]$ to obtain
\begin{equation}\label{eq:ratio_upper_bound}
    \frac{b_{l+1}}{b_l} = \frac{n(1-p)-l}{np+l+1} \cdot \frac{p}{1-p}\paren{1+\frac{1}{l}}^q \leq \frac{e^{q/l}}{1+\frac{l}{np}} \leq \frac{e^{\sqrt{q/np}/2}}{1+2\sqrt{q/np}} \leq e^{-\sqrt{q/np}/6} \leq e^{-1/6}.
\end{equation}
In the second-to-last inequality, we used the fact that $\frac{2}{3}\sqrt{\frac{q}{np}}\leq \frac{2\sqrt 2}{3}\leq 1$. In particular, this shows that if $k_1 = \ceil{np+2\sqrt{npq}}$,
\begin{equation}\label{eq:case3_large_deviations}
    \sum_{k=k_1}^n b_{k-np} \asymp b_{k_1-np}.
\end{equation}
On the other hand, if $1\leq l\leq \frac{1}{6}\sqrt{npq}$
\begin{equation*}
    \frac{b_{l+1}}{b_l} \geq \paren{1-\frac{3l}{np}} \paren{1+\frac{q}{l}}  \geq \paren{1-\frac{1}{2}\sqrt{\frac{q}{np}}}  \paren{1+6\sqrt{\frac{q}{np}}}\leq 1
\end{equation*}
In the last inequality, we used the fact that $\sqrt{\frac{q}{np}}\leq \sqrt 2$. As a result, the maximum of $b_l$ for $l\geq 0$ is achieved for $l\in[\frac{1}{6}\sqrt{npq},2\sqrt{npq}]$, and if $k_2 = \ceil{np+\frac{1}{6}\sqrt{npq}}$, we obtained
\begin{equation}\label{eq:case3_small_deviations}
    \sum_{k=\ceil{np}}^{k_2-1} b_{k-np} \leq \frac{1}{6}\sqrt{npq} \cdot b_{k_2-np}.
\end{equation}
We next show that up to exponential terms in $q$, $b_l$ has same order within this range. Precisely, for $l\in [\frac{1}{6}\sqrt{npq},2\sqrt{npq}]$ and an integer $0\leq r\leq 2\sqrt{npq}$, we have
\begin{equation*}
    \frac{b_{l+r}}{b_l} \leq \paren{1+\frac{r}{l}}^q \leq \paren{1+\frac{1}{8}}^q\asymp_q 1.
\end{equation*}
We now turn to the lower bound and now suppose $n\geq 100$q. We will treat the other case separately. We recall that $l+r\leq 4\sqrt{npq} \leq \frac{4n}{\sqrt{200}} \leq \frac{n}{4}$, so that $\frac{l+r-1}{n(1-p)}\leq \frac{1}{2}$. Next, by convexity, we have the inequality $1-x\geq e^{-2x}$ for $x\in[0,1/2]$. Hence,
\begin{equation*}
    \frac{b_{l+r}}{b_l} \geq \paren{\frac{1-\frac{l+r-1}{n(1-p)}}{1+\frac{l+r}{np}}}^r \geq \exp\sqb{-r\paren{8\sqrt{\frac{q}{n(1-p)}} + 4\sqrt{\frac{q}{np}}}} \geq e^{-2(8\sqrt 2+4)q}\asymp_q 1.
\end{equation*}
Hence, this shows that $b_l\asymp_q b_{l+r}$. In particular,  the two previous statements showed that
\begin{equation*}
    \sum_{k=k_2}^{k_1} b_{k-np} \asymp_q \sqrt{npq} \cdot b_{k_1-np}.
\end{equation*}
Combining the previous equation together with Eq~\eqref{eq:case3_large_deviations} and \eqref{eq:case3_small_deviations}, we have,
\begin{equation*}
    \Ebb \sqb{|Y-np|^q \1_{Y>np}} = \sum_{np\leq k\leq n} b_{k-np}\asymp_q \sqrt{npq} \cdot b_{k_1-np}.
\end{equation*}
We then use Stirling's approximation formula to estimate the right-hand side. Noting that $1\leq \sqrt{npq} \leq \frac{np}{10}$ since $n\geq 100q$, we have
\begin{equation*}
    b_{k_1-np} \asymp_q \frac{(\sqrt{npq})^q}{\sqrt{np}\paren{\frac{k_1}{np}}^{k_1} \paren{\frac{1-\frac{k_1}{n}}{1-p}}^{n-k_1}}.  
\end{equation*}
Now writing $k_1=np+l_1$, we have that $\frac{l_1}{np}\leq 2\sqrt{\frac{q}{np}}\leq 2$ and $\frac{k_1 l_1^2}{(np)^2}=\Ocal(q)$. Then,
\begin{equation*}
    \paren{\frac{k_1}{np}}^{k_1} = \exp\paren{k_1\ln\paren{1+\frac{l_1}{np}}}  =   \exp\paren{\frac{k_1 l_1}{np} + k_1\Ocal\paren{\frac{l_1^2}{(np)^2}}} = e^{l_1 + \Ocal(q)}\asymp_q e^{l_1}.
\end{equation*}
Similarly,
\begin{align*}
    \paren{\frac{1-\frac{k_1}{n}}{1-p}}^{n-k_1} &= \exp\paren{(n-k_1)\ln\paren{1-\frac{l_1}{n(1-p)}}} \\
    &= \exp\paren{-l_1-\frac{l_1^2}{n(1-p)}+(n-k_1)\Ocal\paren{\frac{l_1^2}{n^2}}} \asymp_q e^{-l_1}.
\end{align*}
As a result, combining all the previous estimates gives
\begin{equation*}
    \Ebb \sqb{|Y-np|^q \1_{Y>np}} \asymp_q\sqrt q\cdot (npq)^{q/2}\asymp_q (npq)^{q/2}.
\end{equation*}
We now turn to the left tails. For $2\sqrt{npq}\leq l \leq np$,
\begin{equation*}
    \frac{b_{-l-1}}{b_{-l}} = \frac{1-\frac{l}{np}}{1+\frac{l+1}{n(1-p)}} \paren{1+\frac{1}{l}}^q \leq \exp\paren{\frac{q}{l}-\frac{l}{np}} \leq \exp\paren{-\sqrt{\frac{q}{np}}} \leq \frac{1}{e}.
\end{equation*}
Thus, if $k_3 = \floor{np-2\sqrt{npq}} \geq 0$, we have that $\sum_{k=0}^{k_3}b_{k-np}\asymp b_{k_3-np}$. It suffices then to focus on the terms $b_{-l}$ for $0\leq l\leq 2\sqrt{npq}+1$. Going back to the previous displayed equation shows that the term $\binom{n}{np-l}p^{np-l}(1-p)^{n(1-p)+l}$ is decreasing with $l\geq 0$. As a result, for any $k_3\leq k\leq \floor{np}$, we have
\begin{align*}
    b_{k-np}\leq \binom{n}{\floor{np}}p^{\floor{np}}(1-p)^{n-\floor{np}} (2\sqrt{npq}+1)^q  \asymp_q \frac{(npq)^{q/2}}{\sqrt{np}}. 
\end{align*}
In the last inequality, we used Stirling's approximation formula. As a result, $\sum_{k=k_3}^{\floor{np}}b_{k-np}\lesssim_q \sqrt q\cdot (npq)^{q/2}\asymp (npq)^{q/2}$. Combining the previous equations shows that for $n\geq 100q$,
\begin{equation*}
    \Ebb|Y-np|^q = \sum_{k=0}^n b_{k-np} \asymp_q (npq)^{q/2}.
\end{equation*}

We now treat the case $n\leq 100q$. In that case, $\frac{1}{100}\leq p\leq \frac{1}{2}$ so that $2^n, p^n\asymp_q 1$. Hence,
\begin{equation*}
    \Ebb |Y-np|^q \asymp_q\sum_{k=0}^n |k-np|^q \asymp_q q^q \asymp_q (npq)^{q/2}.
\end{equation*}

\paragraph{Regime 2: $\frac{q}{ne^q}\leq p\leq \frac{q}{2n}$.} Again, we start with the right tails. For convenience, we let
\begin{equation*}
    L:= \frac{q}{\ln\frac{q}{np}}.
\end{equation*}
We note that $L\gtrsim np$. Using similar computations as in Eq~\eqref{eq:ratio_upper_bound}, for $l\geq L$, we have
\begin{equation*}
    \frac{b_{l+1}}{b_l} \leq \frac{np}{l} e^{q/l} \leq \frac{np}{L} e^{q/L} = \sqrt{\frac{np}{q}}\ln\frac{q}{np} \leq \frac{2}{e}.
\end{equation*}
Hence, after $l=L$, the decay of $b_l$ is exponential. Hence, $\sum_{k=\ceil{np+L}}^n b_{k-np}\asymp b_{\ceil{np+L}-np}$. This also shows that if $k_{max}$ is the integer for which $b_{k_{max}-np}$ is maximized and $k_{max}\geq np$, we have $k_{max}-np\leq L$. As a result
\begin{multline*}
     b_{k_{max}-np}\leq \Ebb \sqb{|Y-np|^q \1_{Y>np}} = \sum_{np\leq k\leq n}b_{k-np}\\
     \leq (1+L)b_{k_{max}-np} + \sum_{k\geq np+L} b_{k-np} \asymp L b_{k_{max}-np}.
\end{multline*}
Now note that $1\lesssim L\lesssim q$, so that $L\asymp_q 1$. Thus, with $l_{max} = k_{max}-np$,
\begin{equation*}
    \Ebb \sqb{|Y-np|^q \1_{Y>np}} \asymp_q b_{l_{max}}.
\end{equation*}
We recall that we already know $l_{max}\leq L$. Now for any $l\in[0,L]$, by Stirling's approximation formula,
\begin{equation*}
    b_l\asymp \frac{1}{\sqrt{np}} \frac{l^q}{\paren{1+\frac{l}{np}}^{np+l} \paren{1-\frac{l}{n(1-p)}}^{n(1-p)-l}}
\end{equation*}
Now $\frac{q}{e^q}\leq np\leq q$ so that $\sqrt{np}\asymp_q 1$. Also, since $l\leq L\lesssim q\leq n$, we have
\begin{equation*}
     1\geq \paren{1-\frac{l}{n(1-p)}}^{n(1-p)-l} \geq \paren{1-\frac{2L}{n}}^n = e^{-\Ocal(L)}\asymp_q 1.
\end{equation*}
Next, $1\leq \paren{1+\frac{l}{np}}^{np} \leq e^l -e^{\Ocal(q)}\asymp_q 1$. As a result, for $l\in[0,L]$,
\begin{equation*}
    b_l \asymp_q \frac{l^q}{\paren{1+\frac{l}{np}}^l}.
\end{equation*}
Now if $l\leq np$, we have $b_l\lesssim_q (np)^q\lesssim_q L^q$. On the other hand, if $l\gtrsim np$,
\begin{equation*}
    \paren{1+\frac{l}{np}}^l = \exp\paren{l\Ocal\paren{\ln \frac{l}{np}}}\leq \exp\paren{L\Ocal\paren{\ln \frac{q}{np}}} = e^{\Ocal(q)}\asymp_q 1.
\end{equation*}
Hence, we obtained that for $np\lesssim l\leq L$, $b_l\asymp_q L^q$, while for $0\leq l\leq np$ $b_l\lesssim_q L^q$. As a result we obtained
\begin{equation*}
    \Ebb \sqb{|Y-np|^q \1_{Y>np}} \asymp_q b_{l_{max}} \asymp L^q.
\end{equation*}
The left tail bound is immediate since 
\begin{equation*}
    \Ebb \sqb{|Y-np|^q \1_{Y<np}} \leq (np)^q\lesssim_q L^q.
\end{equation*}
Combining the two previous equations gives the desired result $\Ebb |Y-np|^q\asymp_q L^q$.

\paragraph{Regime 3: $p\leq \frac{q}{ne^q}$.}
In particular, $p\leq \frac{1}{2n}$ so that $(1-p)^n\asymp 1$. Hence, noting that $|k-np|\asymp k$ for any $k\geq 1$, we obtain
\begin{equation}\label{eq:case_1}
    \Ebb|Y-np|^q \asymp_q np + \sum_{k=2}^n \binom{n}{k} p^k k^q.
\end{equation}
Now note that
\begin{equation*}
    \sum_{k=2}^n \binom{n}{k} p^k k^q \leq \sum_{k\geq 2} \frac{(np)^k}{k!}k^q
\end{equation*}
Letting $a_k := \frac{(np)^k}{k!}k^q$, we have for any $k\geq 2$
\begin{equation*}
    \frac{a_{k+1}}{a_k} = \frac{np}{k+1}\paren{1+\frac{1}{k}}^q \leq \frac{qe^{q/k}}{ke^q} \leq \frac{q}{2e^{q/2}} \leq \frac{1}{e}.
\end{equation*}
As a result,
\begin{equation*}
    \sum_{k=2}^n \binom{n}{k} p^k k^q  \leq \frac{e}{2(e-1)}(np)^2 2^q.
\end{equation*}
Plugging this into Eq~\eqref{eq:case_1} yields
\begin{equation*}
    \Ebb|Y-np|^q \asymp_q np.
\end{equation*}

This ends the proof of the lemma.
\end{proof}

We are now ready to prove the following result, which gives general bounds on $\Ebb \|\hat \p_n-\p\|_q$ as well the asymptotic convergence rate when $q\geq 2$ up to a factor $\Theta(\sqrt q)$.

\begin{proposition}\label{prop:lq_bounds}
    Let $\p\in[0,\frac{1}{2}]^\Nbb_{\downarrow 0}$ such that $\|\p\|_1<\infty$. For $q\geq 1$ and $n\geq 1$, we have
    \begin{equation*}
        \frac{1}{\sqrt n} \paren{\sum_{p(i) \geq \frac{1}{n} }p(i)^{q/2}}^{1/q} + \paren{\sum_{p(i)\leq \frac{1}{n}}p(i)^q}^{1/q} \lesssim \Ebb \|\hat \p_n-\p\|_q \lesssim \paren{\frac{1}{n^q} \sum_{i\geq 1} \psi_q(n,p(i))}^{1/q}.
    \end{equation*}
\end{proposition}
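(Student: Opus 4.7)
\medskip
\noindent\textbf{Proof plan.} The proof will be a direct application of the two Jensen inequalities already highlighted in the paper, combined with the central-moment estimates of \cref{lemma:central_moments} on the upper side and the standard first-moment estimate for a binomial on the lower side.

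\medskip
\noindent\emph{Upper bound.} I would start from
\begin{equation*}
    \Ebb\|\hat\p_n-\p\|_q \;\leq\; \paren{\Ebb\|\hat\p_n-\p\|_q^q}^{1/q} \;=\; \paren{\sum_{i\geq 1}\Ebb|\hat p_n(i)-p(i)|^q}^{1/q},
\end{equation*}
which uses only linearity of the expectation. Since $n\hat p_n(i)\sim \Bcal(n,p(i))$, \cref{lemma:central_moments} gives $\Ebb|\hat p_n(i)-p(i)|^q = n^{-q}\Ebb|Y_i-np(i)|^q \asymp_q n^{-q}\psi_q(n,p(i))$. Summing and taking the $q$-th root yields the claimed upper bound.

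\medskip
\noindent\emph{Lower bound.} For the lower bound I will use the reverse Jensen inequality
\begin{equation*}
    \Ebb\|\hat\p_n-\p\|_q \;\geq\; \paren{\sum_{i\geq 1}(\Ebb|\hat p_n(i)-p(i)|)^q}^{1/q}.
\end{equation*}
The classical one-dimensional binomial deviation bound (already cited from \citet{berend2013sharp} in the proof of Corollary~\ref{cor:correlated_case}) gives $\Ebb|\hat p_n(i)-p(i)| \asymp p(i) \land \sqrt{p(i)/n}$. When $p(i)\geq 1/n$ the minimum equals $\sqrt{p(i)/n}$, and when $p(i)\leq 1/n$ it equals $p(i)$. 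Splitting the sum along this threshold therefore gives
\begin{equation*}
    \sum_{i\geq 1}(\Ebb|\hat p_n(i)-p(i)|)^q \;\asymp\; \frac{1}{n^{q/2}}\sum_{p(i)\geq 1/n}p(i)^{q/2} \;+\; \sum_{p(i)\leq 1/n}p(i)^q.
\end{equation*}
Finally, using $(A+B)^{1/q}\geq \max(A,B)^{1/q}\geq \tfrac12(A^{1/q}+B^{1/q})$ for $A,B\geq 0$ splits this $q$-th root back into the sum of the two terms in the statement, which gives the claimed lower bound.

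\medskip
\noindent\emph{Main point.} No step is really an obstacle: the only ingredient that is not already in this paper or in standard references is the central-moment estimate $\Ebb|Y-np|^q\asymp_q \psi_q(n,p)$, but this is precisely the content of \cref{lemma:central_moments} which I assume. The only minor subtlety worth flagging is that the $\lesssim$/$\gtrsim$ in the statement hide constants that depend on $q$ (through the $\asymp_q$ in \cref{lemma:central_moments} and through the $2^{1/q}$ factor in $(A+B)^{1/q}$), which is consistent with how the companion asymptotic result \cref{prop:lq_asymptotic} is stated.
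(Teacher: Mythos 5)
Your proposal is correct and follows essentially the same route as the paper: both sides are squeezed by the two Jensen inequalities already displayed in Eq.~\eqref{eq:jensens}, then Lemma~\ref{lemma:central_moments} handles the upper side and the one-dimensional estimate $\Ebb|\hat p_n(i)-p(i)|\asymp p(i)\land\sqrt{p(i)/n}$ handles the lower side. (As a small aside, you correctly use $\land$ here; the paper's displayed formula writes $\lor$, but its subsequent split along $p(i)\gtrless 1/n$ makes clear $\land$ is intended.)
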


\begin{proof}
    We start by observing that by Jensen's inequality, one has
    \begin{equation}\label{eq:jensens}
        \paren{\sum_{i\geq 1}(\Ebb|\hat p_n(i)-p(i)|)^q}^{1/q} \leq \Ebb \|\hat \p_n-\p\|_q \leq (\Ebb \|\hat \p_n-\p\|_q^q)^{1/q}.
    \end{equation}
    The right-hand side inequality uses the convexity of $x\geq 0\mapsto x^q$ and the left-hand side uses the convexity of $x\geq 0\mapsto (c+x^q)^{1/q}$ for any fixed $c\geq 0$. Now using \cref{lemma:central_moments}, we have
    \begin{equation*}
        (\Ebb \|\hat \p_n-\p\|_q^q)^{1/q} \asymp \paren{\frac{1}{n^q}\sum_{i\geq 1} \psi_q(n,p(i))}^{1/q},
    \end{equation*}
    which gives the desired upper bound. Next,
    \begin{align*}
        \paren{\sum_{i\geq 1}(\Ebb|\hat p_n(i)-p(i)|)^q}^{1/q} &\asymp \paren{\sum_{i\geq 1} \paren{\sqrt{\frac{p(i)}{n} }\lor p(i)}^q}^{1/q}\\
    &\asymp \frac{1}{\sqrt n} \paren{\sum_{p(i) \geq \frac{1}{n} }p(i)^{q/2}}^{1/q} + \paren{\sum_{p(i)\leq \frac{1}{n}}p(i)^q}^{1/q}.
    \end{align*}
    This ends the proof of the proposition.
\end{proof}

We are now ready to prove the asymptotic bounds from Proposition~\ref{prop:lq_asymptotic} using the previous result.

\vspace{2mm}

\begin{proof}{\textbf{of Proposition~\ref{prop:lq_asymptotic}}}
    We now analyze the asymptotic convergence of $\Ebb\|\hat \p_n-\p\|_q$ when $q\geq 2$. Since $q\geq 2$, we have $\|\p\|_{q/2}^{q/2}\leq \|\p\|_1<\infty$. For any $i\geq 1$, for $n\geq 1/p(i)$, the previous result shows that
    \begin{equation*}
          \Ebb \|\hat \p_n-\p\|_q \gtrsim \frac{1}{\sqrt n} \paren{\sum_{j\leq i} p(j)^{q/2}}^{1/q}.
    \end{equation*}
    Because this holds for any $i\geq 1$, we obtain the desired lower bound
    \begin{equation*}
        \liminf_{n\to\infty} \sqrt n \Ebb \|\hat\p_n-\p\|_q \gtrsim \paren{\sum_{j\geq 1} p(j)^{q/2}}^{1/q} = \sqrt{\|\p\|_{q/2}}.
    \end{equation*}
    For the upper bound, we first simplify the characterization of the central moments of binomials given in Lemma~\ref{lemma:central_moments}. We obtain directly
    \begin{equation*}
        \psi_q(n,p) \lesssim_q
        \begin{cases}
            (npq)^{q/2} & p\geq \frac{q}{2n}\\
            np q^q& p\leq \frac{q}{2n},
        \end{cases}
    \end{equation*}
    where $\lesssim_q$ hides factors $\Ocal(C^q)$ for a universal constant $C>0$. Here we only simplified the second regime for which $np\leq 1$. As a result, we have that
    \begin{align*}
        (\Ebb \|\hat \p_n-\p\|_q^q)^{1/q} &\lesssim  \sqrt{\frac{q}{n}} \paren{\sum_{p(i) \geq \frac{q}{2n } }p(i)^{q/2}}^{1/q} + \frac{1}{n^{1-1/q}}\paren{\sum_{p(i)< \frac{q}{2n}}p(i)}^{1/q}\\
        &\leq  \sqrt{\frac{q\|\p\|_{q/2}}{n}} + \frac{1}{\sqrt n}\paren{\sum_{p(i)< \frac{q}{2n}}p(i)}^{1/q}.
    \end{align*}
    It suffices to note that $\sum_{p(i)< \frac{q}{2n}}p(i) = o(1)$ as $n\to\infty$ to obtain
    \begin{equation*}
        \Ebb\|\hat \p_n-\p\|_q\leq (\Ebb \|\hat \p_n-\p\|_q^q)^{1/q} \lesssim \sqrt{\frac{q\|\p\|_{q/2}}{n}} + o\paren{\frac{1}{\sqrt n}}.
    \end{equation*}
    This ends the proof of the proposition.
\end{proof}

\section{Proofs of the results on high-probability bounds}\label{app:high_prob}

\comment{
We start with stating the bounded differences inequality:

\begin{lemma}[Bounded differences inequality]\label{lemm:bound_diff_ineq}
    Let $f: \mathcal{X}^n \rightarrow \mathbb{R}$ has the bounded differences property; that is, there is non-negative constant $c \geq 0$ such that 
    \[
    \underset{x_1, x_2, \dots x_n, x_i' \in \mathcal{X}}{\sup} \abs{f(x_1, \dots, x_n) - f(x_1, \dots x_{i-1}, x_i', x_{i+1}, \dots, x_n)} \leq c, \quad 1 \leq i \leq n.
    \]
    Then  
    \[
    \Pbb\left({\abs{f(\mathbf{x}) - \Ebb f(\mathbf{x}}) \geq t} \right)\leq 2e^{-\frac{2t^2}{nc^2}}.
    \]
\end{lemma}

\begin{proof}{\textbf{of Proposition~\ref{prop:high_prob_bound_diff}}}
The function $f$ from Lemma~\ref{lemm:bound_diff_ineq} is $\|\hat \p_n -\p\|$ and it is easy to see that $c = \frac{1}{n}$, so we need to solve $\gamma = 2e^{-2nt^2}$ for $t$, so $t = \sqrt{\frac{\ln{\frac{2}{\gamma}}}{2n}}$.
\end{proof}

\begin{proof}{\textbf{of Proposition~\ref{prop:high_prob_bound_2}}}
We start with the first inequality; to shorten the notation, let $a = \lceil \log_b \frac{2}{\gamma} \rceil$ and introduce $\p_1, \dots \p_a$ in a way that $p_i(j) = p(i+a(j-1))$.  From the proof of the lower bound of Proposition~\ref{prop:reduction}, we can see that $\Pbb(\|\hat \hat\mathbf{r}_n - \mathbf{r}\|_\infty \geq \Delta^+_n(\mathbf{r})) \geq \delta$ for some absolute constant $\delta$ and any decreasing $\mathbf{r}$. We also have from proof of Theorem~\ref{thm:main_result} that $\Delta_n \asymp \Delta_n^+$. Let $t = \min_i \Delta_n(\p_i)$. Then we have for some positive constant $c'$ that 
\begin{align*}
\Pbb\left(\sup_{j} \abs{\hat p_n(j) - p_i(j)} \geq c' t\right) &= 1 - \prod_{i=1}^{a} \left(1-\Pbb\left(\sup_{j} \abs{\hat p_{in}(j) - p_i(j)} \geq c't \right)\right) \\ 
&\geq 1 - \prod_{i=1}^{a} \left(1-\delta \right) \\
&= 1-(1-\delta)^a
\end{align*}
Recalling that $a = \lceil \log_b \frac{2}{\gamma} \rceil \geq \log_b \frac{2}{\gamma}$, we have $1-(1-\delta)^a \geq 1-\gamma/2$ when we set $b = 1/(1-\delta)$; whence

\[
\Pbb\left(\sup_{j} \abs{\hat p_n(j) - p_i(j)} \geq c'\min_i \Delta_n(\p_i)\right) \geq 1-\gamma/2.
\]

It remains to show that $\min_i \Delta_n(\p_i) \asymp 
\Delta_n(\p')$. To see that, recall the definition 
$p'(j) = p(ja) \leq  p(i+a(j-1)) =p_i(j)$ for every $j \geq 1$ and $1 \leq i \leq a$. Thus, from Theorem~\ref{thm:main_result} it follows that $\Delta_n(\p') \lesssim \Delta_n(\p_i)$ for $1 \leq i \leq n$, so there is a constant $c>0$ such that 
\[
\Pbb\left(\sup_{j} \abs{\hat p_n(j) - p_i(j)} \geq c\Delta_n(\p')\right) \geq 1-\gamma/2.
\]
For the second inequality, we use Equation~\eqref{eq:upper_tail_prob} from proof of Proposition~\ref{prop:reduction};  we rewrite it in the following way for some constant $C > 0$
\[
 \Pbb\paren{\sup_{i\geq 1}[\hat p_n(i) - p(i)] \leq kC\Delta_n(\p) } \geq 1- \frac{1}{2^{k}}.
 \]
 Selecting $k = \log_2\frac{2}{\gamma}$ and union bound finishes the proof.
\end{proof}

\begin{proof}{\textbf{of Proposition~\ref{prop:high_prob_bound_3}}}
For any $0 < t < \frac{1}{2}$ and integer $J \geq 1$ such that $p(J) \geq \frac{1}{n}$, we can use anti-concentration inequality form Lemma~\ref{lemma:anti_concentration} and we get

\begin{align*}
\Pbb\left(\sup_{j} \abs{\hat p_n(j) - p(j)} \geq t\right) &\geq 1 - \prod_{j=1}^{J} \left(1-\Pbb\left(\abs{\hat p_{n}(j) - p(j)} \geq t \right)\right) \\ 
&\geq 1 -  \left(1-c_0e^{-CnD(p(J) + t \parallel p(J))}\right)^J\\
&\geq 1-\gamma.
\end{align*}

It remains to find a $t$ satisfying the following:
\begin{align}\label{eq:hpb_proof}
    -\frac{\ln\left( 1-\gamma^{1/J} \right)}{n} \geq \frac{\gamma^{1/J}}{n} &\gtrsim D(p(J) + t \parallel p(J)).
\end{align}
For this, we mimic the proof of Proposition~\ref{prop:lower_bound} after the replacement of $\frac{\ln \frac{3J}{2}}{Cn}$ by $\frac{\gamma^{1/J}}{n}$ and conclude that the second inequality of Equation~\eqref{eq:hpb_proof} is satisfied for some 
\[
t \asymp \frac{\gamma^{1/J}}{n\ln\left(2+ \frac{\gamma^{1/J}}{np(J)}\right)} \lor \sqrt{\frac{p(J)\gamma^{1/J}}{n}}.
\]

\end{proof}
}

In this section, we provide high-probability bounds on the maximum deviation of the empirical mean for product distributions on $\{0,1\}$. We will need the following simple lemma on small empirical mean deviations of binomials.

\begin{lemma}\label{lemma:small_deviations}
    Let $p\in(0,\frac{1}{2}]$ and $n\geq \frac{4}{p}$. Then, for $\frac{1}{\sqrt{np}}\leq \delta \leq 1$,
    \begin{equation*}
        \Pbb_{Y\sim\Bcal(n,p)}\paren{\left|Y-np\right| \leq \delta\sqrt{np}} \asymp \delta.
    \end{equation*}
    In particular, for any $\delta\in(0,1]$, there is a universal constant $c>0$ for which
    \begin{equation*}
        \Pbb_{Y\sim\Bcal(n,p)}\paren{\left|Y-np\right|\lor 1 \leq c\delta\sqrt{np}} \leq \delta.
    \end{equation*}
\end{lemma}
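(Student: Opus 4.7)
My plan is to use the local central limit theorem estimate for the binomial point probabilities: each $\Pbb(Y=k)$ with $|k-np|\lesssim \sqrt{np}$ is of order $(np)^{-1/2}$, so summing over the $\Theta(\delta\sqrt{np})$ integers in the window $[np-\delta\sqrt{np},\,np+\delta\sqrt{np}]$ produces a total of order $\delta$.

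First I would establish that, for every integer $k$ with $|k-np|\leq \sqrt{np}$, one has $\binom{n}{k}p^k(1-p)^{n-k}\asymp 1/\sqrt{np}$. The hypothesis $n\geq 4/p$ guarantees $np\geq 4$, so $k\geq np-\sqrt{np}\geq np/2\geq 2$, and $n-k\geq n/2-\sqrt{np}\geq n/4$ (using $p\leq 1/2$), meaning every such $k$ is a valid binomial index. Writing $k=np+l$ with $|l|\leq \sqrt{np}$ and applying Stirling's formula,
\begin{equation*}
    \binom{n}{k}p^k(1-p)^{n-k}\asymp \frac{1}{\sqrt{np(1-p)}}\paren{\frac{np}{k}}^{k}\paren{\frac{n(1-p)}{n-k}}^{n-k}.
\end{equation*}
A second-order Taylor expansion of $\ln(1+l/(np))$ and $\ln(1-l/(n(1-p)))$ shows that the product of the two last factors equals $\exp(-l^2/(2np(1-p)) + O(|l|^3/(np)^2))$. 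Since $|l|\leq\sqrt{np}$, the quadratic term is bounded by $1/(2(1-p))\leq 1$, and the cubic error is $O(1/\sqrt{np})=O(1)$. Combined with $1\leq 1/(1-p)\leq 2$, this yields the desired estimate.

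Next I would sum these point probabilities over the integers in $[np-\delta\sqrt{np},\,np+\delta\sqrt{np}]$. Because $\delta\sqrt{np}\geq 1$, this window contains between $\delta\sqrt{np}$ and $2\delta\sqrt{np}+1\leq 3\delta\sqrt{np}$ integers, all of which satisfy $|k-np|\leq \sqrt{np}$ (since $\delta\leq 1$). The previous step then gives
\begin{equation*}
    \Pbb_{Y\sim\Bcal(n,p)}\paren{|Y-np|\leq \delta\sqrt{np}} \asymp \delta\sqrt{np}\cdot \frac{1}{\sqrt{np}} = \delta,
\end{equation*}
which is the first claim.

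For the second statement I would split on whether $c\delta\sqrt{np}\geq 1$. If $c\delta\sqrt{np}<1$ the event $|Y-np|\lor 1\leq c\delta\sqrt{np}$ is impossible, so its probability vanishes. Otherwise set $\delta' := c\delta$; for $c\leq 1$ one has $1/\sqrt{np}\leq \delta'\leq 1$, so the first part yields $\Pbb(|Y-np|\leq \delta'\sqrt{np})\asymp c\delta$, and choosing $c$ small enough (the reciprocal of the upper $\asymp$ constant) makes this at most $\delta$. The main obstacle is the careful Stirling bookkeeping in the first step, particularly ensuring that the cubic remainder $O(|l|^3/(np)^2)$ stays bounded; this is exactly where the assumption $n\geq 4/p$ (equivalently $np\geq 4$) is used.
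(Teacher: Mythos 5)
Your argument is correct and follows essentially the same route as the paper: a Stirling/local-CLT estimate showing $\Pbb(Y=k)\asymp 1/\sqrt{np}$ uniformly for $|k-np|\leq\sqrt{np}$, then summing over the $\Theta(\delta\sqrt{np})$ integers in the window, and reducing the second claim to the first by absorbing the constant. One small note: your Stirling prefactor $1/\sqrt{np(1-p)}$ is the correct one, whereas the paper's displayed intermediate bound writes $1/\sqrt{n}$ (an apparent typo, since the summation step and the final conclusion clearly require $1/\sqrt{np}$).
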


\begin{proof}
    Let $k\in[np-\sqrt{np},np+\sqrt{np}]$ be an integer. We write $k=n(p+l)$ and note that $p+l\leq \frac{3}{4}$ because of the hypothesis. Then, using Stirling's formula
    \begin{equation*}
        \Pbb(Y=k) = \binom{n}{k} p^k (1-p)^{n-k}  \asymp \frac{1}{\sqrt n} \paren{\frac{p}{p+l}}^k\paren{\frac{1-p}{1-p-l}}^{n-k}.
    \end{equation*}
    But $ \paren{\frac{p}{p+l}}^k =\exp(-n(p+l)\ln(1+l/p))\asymp \exp(-np\ln(1+l/p)) \asymp e^{-nl}$. Also, $\paren{\frac{1-p}{1-p-l}}^{n-k}=\exp(n(1-p-l)\ln(1+l/(1-p-l)))\asymp e^{nl}$. Hence we obtained that there exist constants $b_1,b_2>0$ such that for any $k\in [np-\sqrt{np},np+\sqrt{np}]\cap \Nbb$, one has
    \begin{equation}\label{eq:similar_proba}
        \frac{b_1}{\sqrt n} \leq \Pbb(Y=k) \leq \frac{b_2}{\sqrt n}.
    \end{equation}
    By hypothesis, this set $[np-\sqrt{np},np+\sqrt{np}]\cap \Nbb$ contains at least $2\sqrt{np}-1\asymp 2\sqrt{np}$ elements.
    Now let $\frac{1}{\sqrt{np}}\leq \delta\leq 1$. Then, the interval $[np-\delta\sqrt{np},np+\delta\sqrt{np}]\cap \Nbb$ contains at least $\frac{1}{2}\delta \sqrt{np}$ elements. Together with Eq~\eqref{eq:similar_proba} this gives
    \begin{equation*}
        \Pbb(|Y-np|\leq \delta\sqrt{np})\asymp \delta \Pbb(|Y-np|\leq \sqrt{np}) \asymp \delta.
    \end{equation*}
    In the last inequality, we used Chernoff's bound.
\end{proof}

We prove the following bounds, which in particular include those stated in Proposition~\ref{prop:high_probability_bounds}.

\begin{proposition}
Let $\gamma\in(0,\frac{1}{2})$ and $\p\in[0,\frac{1}{2}]^\Nbb_{\downarrow 0}$ such that there exists $j\geq 1$ with $p(j)\geq \frac{\gamma}{2nj}$. Then, for some universal constants $a_1, a_2, a_3>0$,
\begin{equation*}
	\Pbb\paren{\|\hat \p_n - \p\|_\infty \geq a_1 \sup_{j\geq 1}\phi_{\frac{j}{\gamma},p(j)}(n)} \leq \gamma \quad \text{and} \quad \Pbb\paren{\|\hat \p_n - \p\|_\infty \geq a_2 \sup_{j\geq 1}\phi_{\frac{j}{\gamma},p(j)}(n)} \geq a_3\gamma
\end{equation*}
Also,
\begin{multline*}
	\Pbb\paren{\|\hat \p_n - \p\|_\infty \lor \frac{1}{n} \leq a_2 \sup_{j\geq 1}\phi_{\frac{j}{\ln 1/\gamma},p(j)}(n)} \leq \gamma \\
    \text{and} \quad \Pbb\paren{\|\hat \p_n - \p\|_\infty \leq a_1 \paren{\sup_{j\geq 1}\phi_{\frac{j}{\ln 1/\gamma},p(j)}(n) \lor \frac{1}{n}}} \geq \gamma^{3\ln\frac{1}{\gamma}}.
\end{multline*}

Let $\p\in[0,\frac{1}{2}]^\Nbb_{\downarrow 0}$ such that $p(j)\geq \frac{\gamma}{2nj}$ for all $j\geq 1$. Then,
\begin{equation*}
	\Pbb\paren{\sup_{j\geq 1} \hat p_n(j) \geq \frac{2}{n} }\leq \gamma \quad \text{and} \quad 								\Pbb\paren{\sup_{j\geq 1} \hat p_n(j) = \frac{1}{n} }\asymp 1\land n\sum_{j\geq 1}p(j).
\end{equation*}
\end{proposition}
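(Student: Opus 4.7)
The plan is to adapt the two-sided argument of Proposition \ref{prop:reduction} so that the per-coordinate failure level $c_0/(2j)$ is replaced by a $\gamma$-dependent level. The first pair of inequalities (with $\gamma$-scaling inside $\phi$) controls the probability that the empirical deviation is \emph{large}, while the second pair (with $\ln(1/\gamma)$-scaling) controls the probability that it is \emph{small}; Lemma \ref{lemma:anti_concentration} together with Lemma \ref{lemma:small_deviations} supplies the two complementary ingredients.

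For the upper tail of the first pair, set $\varepsilon = \sup_{j\geq 1}\phi_{j/\gamma, p(j)}(n)$. Rerunning the upper-bound half of Proposition \ref{prop:reduction} with the target per-coordinate probability $c_0/(2j)$ replaced by $\gamma/(2j^2)$---equivalently demanding $D(p(i)+\varepsilon\parallel p(i))\gtrsim \ln(i^2/\gamma)/n$---and applying the convexity of $D$ in its first argument together with Lemma \ref{lemma:chernoff_bound} gives $\Pbb(\hat p_n(i)\geq p(i)+ka_1\varepsilon)\leq (\gamma/i^2)^k$ for every integer $k\geq 1$. Summing over $i\geq 1$ and taking the tail sum in $k$ delivers the $\leq\gamma$ statement. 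For the matching lower tail, pick $j^\star$ with $\phi_{j^\star/\gamma,p(j^\star)}(n)\asymp \varepsilon$ and invoke Lemma \ref{lemma:anti_concentration} to obtain the per-coordinate probability $\gtrsim \gamma/j^\star$ of exceeding $p(j^\star)+c_2\varepsilon$. Independence of the first $j^\star$ coordinates then yields $1-(1-c_0\gamma/j^\star)^{j^\star}\asymp \gamma$ when $j^\star\gtrsim 1/\gamma$; in the complementary regime $j^\star\lesssim 1/\gamma$ the extended formula $\phi_{J,q}(n)=e^{-1/J}\sqrt{q/n}$ reduces matters to a single-coordinate sub-Gaussian anti-concentration already producing a $\gamma$-level event.

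For the second pair, pick $j^\star$ maximizing $\phi_{j/\ln(1/\gamma),p(j)}(n)$. Lemma \ref{lemma:anti_concentration} gives $\Pbb(\hat p_n(i)\geq p(i)+c\phi_{j^\star/\ln(1/\gamma),p(j^\star)}(n))\gtrsim \ln(1/\gamma)/j^\star$ for every $i\leq j^\star$. By independence across these coordinates, the probability that no coordinate exceeds the threshold is at most $(1-c_0\ln(1/\gamma)/j^\star)^{j^\star}\leq \gamma^{c_0}$, which becomes $\leq \gamma$ after adjusting universal constants; the $\lor 1/n$ correction absorbs the Poissonian sub-case via the estimate of Proposition \ref{prop:small_p}. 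The reverse direction uses Lemma \ref{lemma:small_deviations} coordinate by coordinate to lower-bound the probability that every one of the first $\sim \ln(1/\gamma)$ coordinates lies within the threshold band, producing the $\gamma^{3\ln(1/\gamma)}$ lower bound through independence.

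For the final (Poissonian) block (where the hypothesis on the $p(j)$ is used in the direction $np(j)\lesssim \gamma/j$), the Chernoff estimate $\Pbb(\hat p_n(j)\geq 2/n)\leq \binom{n}{2}p(j)^2\leq (np(j))^2/2\leq \gamma^2/(8j^2)$ union-bounds to $\leq \gamma^2\pi^2/48 \leq \gamma$. The second identity then follows from the sharp estimate $\Pbb(\sup_j\hat p_n(j)\geq 1/n)\asymp 1\land n\sum_j p(j)$ already proved in Proposition \ref{prop:small_p} by subtracting the $\{\sup\geq 2/n\}$ piece. The main obstacle across all four blocks is the boundary regime where $j^\star$ is close to the transition thresholds $1/\gamma$ or $1/\ln(1/\gamma)$, where the independence-across-coordinates argument degenerates and must be replaced by a single-coordinate calculation using the extended definition of $\phi_{J,q}$ at $J<1$; matching upper and lower constants across this boundary is the delicate step in the proof.
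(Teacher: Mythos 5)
Your proposal follows the paper's overall strategy (adapting Proposition~\ref{prop:reduction} with $\gamma$-dependent thresholds, anti-concentration for matching lower tails, Lemma~\ref{lemma:small_deviations} for the low-probability band, and Proposition~\ref{prop:small_p} for the Poissonian block), and you correctly read the hypothesis in the last block as $p(j)\leq\frac{\gamma}{2nj}$ (the $\geq$ in the statement is a typo). However, two steps in the second pair of inequalities are not yet correct.

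For the fourth inequality (the $\gamma^{3\ln(1/\gamma)}$ lower bound), you only control the first $\sim\ln(1/\gamma)$ coordinates via Lemma~\ref{lemma:small_deviations} and then ``produce the bound through independence.'' But the event $\|\hat\p_n-\p\|_\infty\leq a_1(\sup_j\phi_{j/\ln(1/\gamma),p(j)}(n)\lor\frac{1}{n})$ constrains \emph{all} coordinates; the (infinite) product over the remaining indices $j>\ln(1/\gamma)$ cannot be ignored. The paper handles it with a separate Chernoff step showing that the deviation of each such coordinate stays below the threshold with per-coordinate failure probability $\lesssim(\ln(1/\gamma)/j)^4$, whose product is bounded below by $\gamma^{\Theta(1)}$; combined with the $\gamma^{2\ln(1/\gamma)}$ factor from the first $\ln(1/\gamma)$ coordinates this gives $\gamma^{3\ln(1/\gamma)}$. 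Omitting the tail product is a genuine gap.

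For the third inequality, the independence computation $(1-c_0\ln(1/\gamma)/j^\star)^{j^\star}\leq\gamma^{c_0}$ requires $c_0\ln(1/\gamma)/j^\star\leq 1$, i.e.\ $j^\star\gtrsim\ln(1/\gamma)$. When $j^\star\lesssim\ln(1/\gamma)$ the per-coordinate ``probability'' from anti-concentration exceeds one and the argument is vacuous. You flag this regime but propose a single-coordinate calculation; that does not suffice. The paper again uses Lemma~\ref{lemma:small_deviations} applied to each of the first $j^\star$ coordinates, multiplying the per-coordinate upper bounds $\gamma^{1/j^\star}$ across all of them to obtain the target $\gamma$; a one-coordinate estimate alone yields only $\gamma^{1/j^\star}\gg\gamma$. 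Finally, note that the extended formula $\phi_{J,q}(n)=e^{-1/J}\sqrt{q/n}$ is only triggered in the \emph{second} pair (where $j<\ln(1/\gamma)$ gives $J<1$); in the first pair $j/\gamma\geq1/\gamma>2$, so your invocation of it there is misplaced.
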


\begin{proof}
We start with the first claim which gives upper bounds on $\|\hat \p_n-\p\|_\infty$. The only difference with the proofs for the bounds in expectation is that instead of $\varepsilon_{J,q}(n)$, we will use instead.
\begin{equation*}
	\varepsilon_{J/\gamma,q}(n) = \inf\set{\varepsilon\geq 0 : \Pbb_{Y\sim\Bcal(n,q)}\paren{\frac{Y}{n}\geq q+\varepsilon}\leq \frac{\gamma c_0}{2J}}.
\end{equation*}
Define $\varepsilon = \sup_{j\geq 1} \varepsilon_{j/\gamma,p(j)}(n;\gamma)$. 

We start with the case when there exists $j\geq 1$ for which $p(j)\leq \frac{\gamma}{2nj}$. Then, Proposition \ref{prop:combined_bound_eps} shows that $\varepsilon\geq 0$. We focus on the right tails. The proofs of the high-probability bounds now follow exactly the proof of Proposition \ref{prop:reduction}. Let $\tilde \varepsilon = (\varepsilon\land\frac{1}{4})\lor\frac{1}{n}$. The proof of Proposition \ref{prop:reduction} gives for any $i\geq 1$,
\begin{equation*}
D(p(i)+\tilde\varepsilon \parallel p(i))\geq \frac{\ln\frac{2i}{\gamma}}{nC},
\end{equation*}
then using Chernoff's bound and the union bound,
\begin{equation}\label{eq:useful_eq_hpb}
	\Pbb\paren{\sup_{j\geq 1}\hat p_n(j)-p(j)  \geq 2C\tilde\varepsilon}\leq \sum_{i\geq 1} \paren{\frac{\gamma}{2i}}^2 \leq \frac{\gamma}{2}.
\end{equation}
On the other hand, the proof of Proposition \ref{prop:reduction} also shows that this high-probability maximum deviation is tight up to constants. Indeed, it shows that for $i\geq 1$ such that $\varepsilon_i:=\varepsilon_{i/\gamma,p(i)}(n)\geq \frac{1}{2}\varepsilon$, one has for any $j\leq i$ with $p(j)\geq \frac{1}{n}$ or $p(j)\leq \frac{\varepsilon_i}{2}$,
\begin{equation*}
	\Pbb\paren{\hat p_n(j) \geq p(j) + \frac{\varepsilon_i}{2C}} \geq \frac{\gamma c_0^2}{2i}.
\end{equation*}
If $j\leq i$, $p(j)\leq \frac{1}{n}$ and $p(j)\geq \frac{\varepsilon_i}{2}$, the proof then showed that $\Pbb(\hat p_n(j) \geq p(j) +\frac{\varepsilon_i}{2})\geq \frac{1}{144}$. Hence, we obtain with the same proof
\begin{equation*}
	\Pbb\paren{\sup_{j\geq 1}\hat p_n(j)-p(j) \geq \frac{\varepsilon_i}{2C}} \geq \frac{1}{144} \lor (1-e^{-\gamma c_0^2/2}) \gtrsim \gamma.
\end{equation*}
We next turn to the left tails and show that these are dominated by the right tails. We first note that $D(p-2C\tilde\varepsilon\parallel p)\geq D(p+2C\tilde\varepsilon\parallel p)$. Indeed, with $f(x) = D(q+x\parallel q)$, we have $f'(x) = \ln\frac{q+x}{q} - \ln\frac{1-q-x}{1-q}$. Hence, if $x\geq 0$, $f'(x) + f'(-x) = \ln\paren{1-\frac{x^2}{q^2}} - \ln \paren{1-\frac{x^2}{(1-q)^2}} \leq 0$. Together with the fact that $f$ achieves its minimum at $x=0$ ends the proof of the claim. We then use the union bound together with Chernoff's bound as in Eq~\eqref{eq:useful_eq_hpb} to obtain
\begin{equation*}
	\Pbb\paren{\sup_{j\geq 1}p(j) - \hat p_n(j)  \geq 2C\tilde\varepsilon}\leq \sum_{i\geq 1}e^{-nD(p-2C\tilde\varepsilon\parallel p)} \leq \sum_{i\geq 1}e^{-nD(p+2C\tilde\varepsilon\parallel p)} \leq \frac{\gamma}{2}.
\end{equation*}

We next turn to lower bounds on $\|\hat\p_n-\p\|_\infty$. Again, we heavily use the proof of Proposition~\ref{prop:reduction}, but here we will use $\varepsilon_{j/\ln\frac{1}{\gamma},p(j)}(n)$ instead of $\varepsilon_{j,p(j)}(n)$. We define $\eta = \sup_{j\geq 2\ln\frac{1}{\gamma}/c_0^2} \varepsilon_{\frac{jc_0^2}{2\ln\frac{1}{\gamma}},p(j)}(n)$ and suppose for now that $\eta\geq 0$. Let $i\geq 2\ln\frac{1}{\gamma} /c_0^2$ be an integer such that $\eta_i:= \varepsilon_{\frac{i c_0^2}{2\ln\frac{1}{\gamma}},p(i)}(n) \geq \frac{\eta}{2}$. Then, the proof of Proposition~\ref{prop:reduction} shows that for all $j\leq i$, if either $p(j)\leq \frac{\eta_i}{2}$ or $p(j)\geq \frac{1}{n}$,
\begin{equation*}
    \Pbb\paren{\hat p_n(j)\geq p(j) + \frac{\eta_i}{2C}} \geq \frac{\ln\frac{1}{\gamma}}{i}.
\end{equation*}
On the other hand, if there is $j\leq i$ such that $\frac{\eta_i}{2} \leq p(j) \leq \frac{1}{n}$, then $\eta\leq 2\eta_i\leq \frac{4}{n}$. Then, we have directly that
\begin{equation*}
    \Pbb\paren{\|\hat \p_n-\p\|_\infty \lor \frac{1}{n} \geq \frac{\eta}{4}} = 1 \geq 1-\gamma.
\end{equation*}
As a result, in both cases, this gives
\begin{equation*}
    \Pbb\paren{\|\hat \p_n - \p\|_\infty\lor \frac{1}{n}\geq \frac{\eta}{4C}} \geq 1- \paren{1-\frac{\ln\frac{1}{\gamma}}{i}}^i\geq 1-\gamma.
\end{equation*}
We can then relate this equation to the quantities $\phi_{j/\ln\frac{1}{\gamma},p(j)}(n)$, using Proposition~\ref{prop:combined_bound_eps}, and the fact that a constant factor in $J$ only affects these quantities up to a constant factor. We then obtain directly for some universal constant $a_1$,
\begin{equation}\label{eq:desired_eq}
     \Pbb\paren{\|\hat \p_n - \p\|_\infty\lor \frac{1}{n}\geq a_1 \sup_{j\geq 2\ln\frac{1}{\gamma}/c_0^2} \phi_{j/\ln\frac{1}{\gamma},p(j)}(n)} \geq 1-\gamma.
\end{equation}
We next consider the case when $\eta<0$. By Proposition~\ref{prop:combined_bound_eps}, this implies that for any $j\geq 2\ln\frac{1}{\gamma}/c_0^2$, we have $p(j)\leq \frac{\ln\frac{1}{\gamma}}{c_0^2 nj}$. In particular, $\phi_{j/\ln\frac{1}{\gamma},p(j)}(n) \lesssim \frac{1}{n}$, hence obtaining Eq~\eqref{eq:desired_eq} in this case is immediate.

We now focus on the indices $i\leq 2\ln\frac{1}{\gamma}/c_0^2$. Note that if $p(i)\leq \frac{4}{n}$, then we again have $\phi_{j/\ln\frac{1}{\gamma},p(j)}(n) \lesssim \frac{1}{n}$. Without loss of generality, we can therefore suppose that $p(i)\geq \frac{4}{n}$. Lemma~\ref{lemma:small_deviations} implies that for some constant $c_1>0$, for any $j\leq i$,
\begin{equation*}
    \Pbb\paren{|\hat p_n(j) - p(j)|\lor \frac{1}{n} \leq c_1\gamma^{1/i} \sqrt{\frac{p(i)}{n}}} \leq \gamma^{1/i}.
\end{equation*}
As a result,
\begin{equation*}
    \Pbb\paren{\|\hat \p_n - \p\|_\infty \lor \frac{1}{n} \leq c'\gamma^{1/i} \sqrt{\frac{p(i)}{n}}} \leq \gamma.
\end{equation*}
In particular, this shows that for some universal constant $c_2>0$, we have
\begin{equation*}
    \Pbb\paren{\|\hat \p_n - \p\|_\infty \lor\frac{1}{n} \leq c_2 \sup_{j\leq 2\ln\frac{1}{\gamma}/c_0^2} \phi_{j/\ln\frac{1}{\gamma},p(j)}(n) } \leq \gamma.
\end{equation*}
Together with Eq~\eqref{eq:desired_eq}, we showed the desired bound for some constant $c_3>0$,
\begin{equation*}
    \Pbb\paren{\|\hat \p_n - \p\|_\infty \lor\frac{1}{n} \leq c_3 \sup_{j\geq 1} \phi_{j/\ln\frac{1}{\gamma},p(j)}(n) } \leq \gamma.
\end{equation*}

Next, we turn to the second inequality for the lower bound on $\|\hat\p_n-\p\|_\infty$. The proof of Proposition~\ref{prop:reduction} shows that with $\tilde\varepsilon' = \frac{1}{n}\lor \sup_{j\geq \ln\frac{1}{\gamma}} \varepsilon_{j/\ln\frac{1}{\gamma},p(j)}(n)$, for $j\geq \ln\frac{1}{\gamma}$, we have
\begin{equation*}
    D(p(j)+2C\tilde \varepsilon'\parallel p(j)) \geq \frac{2\ln(2j/\ln\frac{1}{\gamma})}{n}.
\end{equation*}
As a result, using Chernoff's bound and the fact that $D(p(j)-4C\tilde\varepsilon'\parallel p(j))\geq  D(p(j)+4C\tilde \varepsilon'\parallel p(j))$, we obtain
\begin{equation*}
    \Pbb(|\hat p_n(i) - p(i) | \geq 4C\tilde\varepsilon') \leq 2 e^{-nD(p(j)+4C\tilde \varepsilon'\parallel p(j)) } \leq \frac{\ln^4\frac{1}{\gamma}}{8i^4}.
\end{equation*}
Using the the inequality $\ln(1-x)\geq -2x$ for $x\in[0,\frac{1}{2}]$, we obtain
\begin{equation*}
    \Pbb\paren{\sup_{j\geq 1+\ln\frac{1}{\gamma}} |\hat p_n(j) - p(j) | \leq 4C\tilde\varepsilon'} \geq \exp\paren{-\sum_{j\geq 1+\ln\frac{1}{\gamma}}\frac{\ln^4\frac{1}{\gamma}}{4j^4}} \geq \exp\paren{\frac{\ln\frac{1}{\gamma}}{12}}=\gamma^{1/12}.
\end{equation*}
From Proposition~\ref{prop:combined_bound_eps}, we have that
\begin{equation*}
    \tilde\varepsilon \asymp \frac{1}{n}\lor \sup_{j\geq \ln\frac{1}{\gamma}} \phi_{j/\ln\frac{1}{\gamma},p(j)}(n).
\end{equation*}
Hence, it only remains to focus on the indices $i\leq 1+\ln\frac{1}{\gamma}$. First, note that in this case $\phi_{j/\ln\frac{1}{\gamma},p(j)}(n)\asymp \gamma^{1/i}\sqrt{\frac{p(i)}{n}}$. By Lemma~\ref{lemma:small_deviations}, for some constant $C_4>0$, we have
\begin{equation*}
    \Pbb\paren{\sup_{j\leq 1+\ln\frac{1}{\gamma}} |\hat p_n(j) - p(j) | \leq C_4\paren{\sup_{j\leq 1+\ln\frac{1}{\gamma}}\phi_{j/\ln\frac{1}{\gamma},p(j)}(n) \lor \frac{1}{n}}} \geq \gamma^{2\ln\frac{1}{\gamma}}.
\end{equation*}
Putting everything together yields the desired lower bound for some constant $C_5>0$ sufficiently large
\begin{equation*}
    \Pbb\paren{\|\hat \p_n - \p \|_\infty \leq C_5\paren{\sup_{j\geq 1}\phi_{j/\ln\frac{1}{\gamma},p(j)}(n) \lor \frac{1}{n}}} \geq \gamma^{3\ln\frac{1}{\gamma}}.
\end{equation*}

We now treat the case when $p(j)\leq \frac{\gamma}{2nj}$ for all $j\geq 1$, for which we use the proof of Proposition \ref{prop:small_p}. It directly gives with $\hat P_n:=\sup_{j\geq 1}\hat p_n(j)$ and $U(\p) = \sup_{j\geq 1} njp(j)\leq \frac{\gamma}{2}$ that
\begin{equation*}
	\Pbb\paren{\hat P_n \geq \frac{2}{n}} \leq \frac{\pi^2}{6} U(\p)^2 \leq \frac{\gamma}{2}.
\end{equation*}
On the other hand with $V(\p):=n \sum_{j\geq 1}p(j)$ which satisfies $V(\p)\land 1\geq U(\p)$, we have
\begin{equation*}
	c_4 V(\p)\land 1 \leq \Pbb\paren{ \hat P_n \geq \frac{1}{n}} \leq V(\p)\land 1,
\end{equation*}
for some constant $c_4>0$.
\end{proof}

\end{document}